\theoremstyle{plain}
\newtheorem{thm}{Theorem}[section]
\newtheorem*{thm*}{Theorem}
\newtheorem{prop}[thm]{Proposition}
\newtheorem*{prop*}{Proposition}
\newtheorem{lemma}[thm]{Lemma}
\newtheorem*{lemma*}{Lemma}
\newtheorem{corollary}[thm]{Corollary}
\theoremstyle{definition}
\newtheorem{definition}[thm]{Definition}
\newtheorem{example}[thm]{Example}
\newtheorem{rmk}[thm]{Remark}
\newcommand{\cC}{\mathcal{C}}
\newcommand{\cD}{\mathcal{D}}
\newcommand{\cG}{\mathcal{G}}
\newcommand{\R}{\mathbb{R}}
\newcommand{\Z}{\mathbb{Z}}
\newcommand{\N}{\mathbb{N}}
\newcommand{\C}{\mathbb{C}}
\newcommand{\cB}{\mathcal B}
\newcommand{\cT}{\mathcal T}
\newcommand{\WJ}{W_J}
\newcommand{\Wj}{W_{[j]}}
\newcommand{\JW}{\prescript{J}{}{W}} 
\newcommand{\jW}{\prescript{[j]}{}{W}} 
\newcommand{\G}{\Gamma} 
\newcommand{\NF}{\operatorname{NF}} 
\newcommand\KLbasis{\underline{\mathbf{H}}}
\newcommand\Hgen{\mathbf{H}}
\newcommand\Ngen{\mathbf{N}}
\newcommand{\Pxy}{P_{x,y}} 
\numberwithin{equation}{subsection}
\definecolor{amethyst}{rgb}{0.6, 0.4, 0.8}
\definecolor{kellygreen}{rgb}{0.3, 0.73, 0.09}
\definecolor{americanrose}{rgb}{1.0, 0.01, 0.24}
\begin{document}

\title[Cubulation of Bruhat graphs]{Cubulation of Bruhat graphs}

\author{Alex Bishop}
\address{School of Mathematical and Physical Sciences, University of Technology Sydney, Building 4 (CB04), 745 Harris Street, Broadway NSW 2007, Australia}
\email{alexbishop1234@gmail.com}

\author{Elizabeth Mili\'{c}evi\'{c}}
\address{Elizabeth Mili\'{c}evi\'{c}, Department of Mathematics \& Statistics, Haverford College, 370 Lancaster Avenue, Haverford, PA, USA}
\email{emilicevic@haverford.edu}

\author{Anne Thomas}
\address{Anne Thomas, School of Mathematics \& Statistics, Carslaw Building F07,  University of Sydney NSW 2006, Australia}
\email{anne.thomas@sydney.edu.au}

\thanks{EM was supported by NSF Grant DMS 2202017. AB and this research was supported in part by ARC Grant DP180102437, and AT was supported in part by ARC Grant FT250100160.}

\begin{abstract}
  For $(W,S)$ an arbitrary Coxeter system and any $y \in W$, we investigate the condition that the Bruhat graph for the interval $[1,y]$ can be cubulated, meaning roughly that this graph can be spanned by a product of subintervals of $\Z$. Results of Carrell--Peterson and Elias--Williamson imply that if $[1,y]$ can be cubulated, then the Kazhdan--Lusztig polynomial $\Pxy = 1$ for all $x \leq y$. We consider the converse to this result. For $(W,S)$ finite and $w_0$ the longest element in $W$, so that $P_{x,w_0} = 1$ for all $x \in W$, we use normal form forests to construct cubulations of $[1,w_0]$ in types $A$ and $B/C$. However, in some exceptional types, we determine elements $y \in W$ such that $P_{1,y} = 1$ but $[1,y]$ cannot be cubulated.  We then prove that if there are infinitely many $y \in W$ such that $[1,y]$ can be cubulated, then $(W,S)$ must be of type $\tilde{A}_n$ for some $n \geq 1$. Finally, for $(W,S)$ of type $\tilde{A}_2$, we exhibit a cubulation of $[1,y]$ for each of the infinitely many $y \in W$ such that $\Pxy = 1$ for all $x \leq y$.
\end{abstract}

\maketitle


\section{Introduction}

Let $(W,S)$ be an arbitrary Coxeter system and let $y \in W$. In this paper, we investigate the condition that the Bruhat graph for the interval $[1,y]$ can be \emph{cubulated}, meaning that it has a spanning subgraph which is a \emph{cubical lattice}; see Section \ref{sec:cubical} for the precise definitions, and \cref{fig:example-sublattice} for an example. 

\begin{figure}[h!]
	\begin{minipage}[t]{.35\linewidth}
		\includegraphics[width=\linewidth]{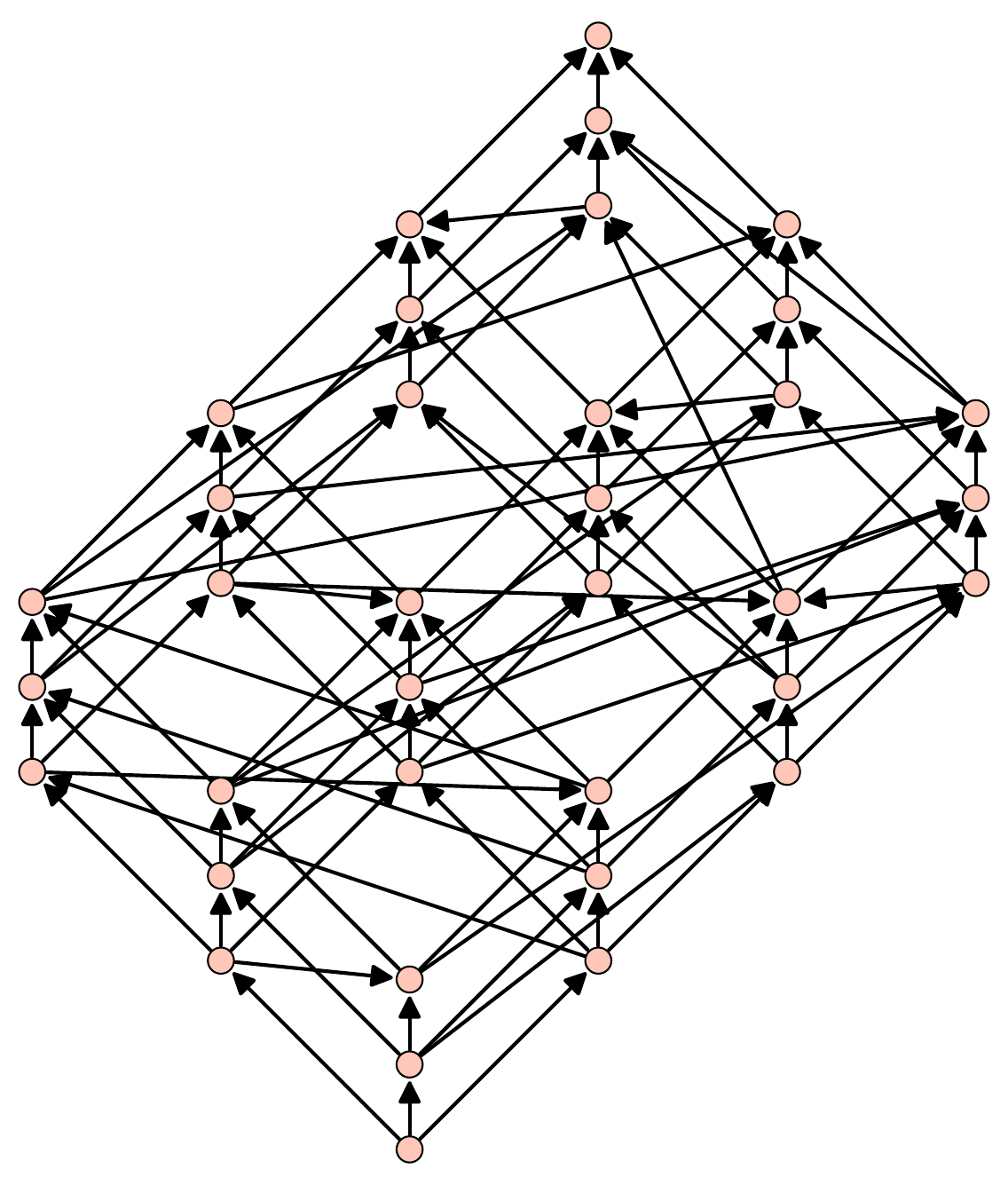}
	\end{minipage}
	~
	\begin{minipage}[t]{.35\linewidth}
		\includegraphics[width=\linewidth]{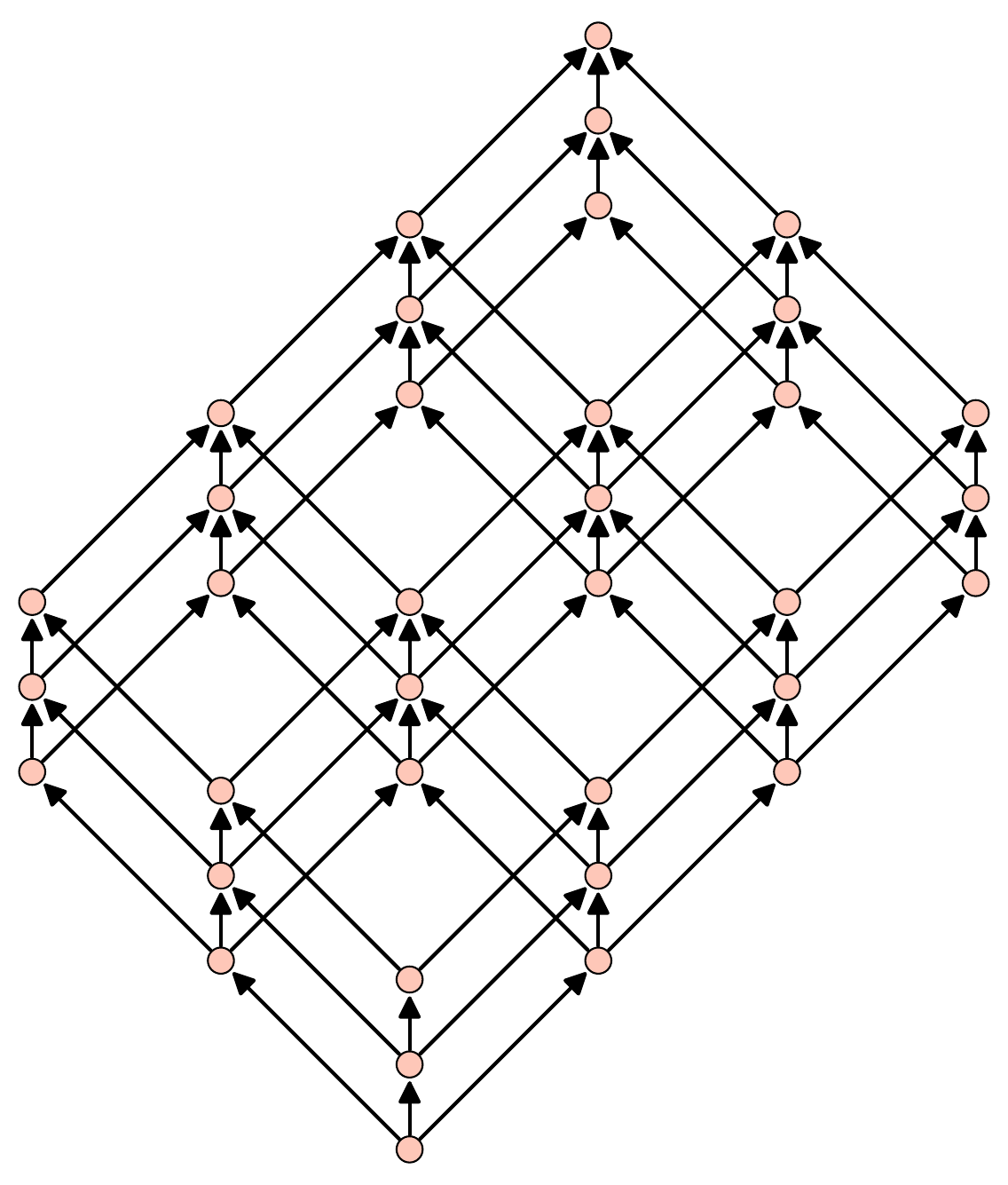}
	\end{minipage}
	
	\caption{\footnotesize{The Hasse diagram for the interval $[1 , s_1 s_0 s_2 s_1 s_0 s_2 s_0]$ in type $\tilde{A}_2$ is depicted on the left. This graph is spanned by the cubical lattice $\cC(2,2,3)$ on the right, and hence the Bruhat graph for this interval can be cubulated.}}%
	\label{fig:example-sublattice}
\end{figure}

After posting the first version of this paper, we learned that when $W$ is finite and $y = w_0$ is the longest element of $W$, cubulation of the Bruhat graph for $[1,w_0]=W$ by a cubical lattice parameterized by the exponents of $W$ is equivalent to~$W$ admitting a \emph{Lehmer code}. The Lehmer code for the finite symmetric group $W = S_n$ is a classical notion, while Lehmer codes for arbitrary finite Coxeter groups were introduced by Bolognini and Sentinelli in~\cite{Bolognini2025}, and further investigated by Sentinelli and Zatti in~\cite{SentinelliZatti}. Recent work of Gaetz and Gao~\cite{GaetzGao}, motivated by \cite{Gasharov,BilleyFanLosonczy,Billey}, extends the notion of Lehmer codes to arbitrary elements~$y$, and $[1,y]$ admitting a Lehmer code in this sense is exactly equivalent to the Bruhat graph for $[1,y]$ admitting a cubulation. These equivalences are explained in detail in Section~\ref{sec:cubulationPoincare}. Our notion of cubulation applies explicitly to all elements of arbitrary Coxeter groups, and introduces a graph-theoretic (hence visual) perspective.

We are motivated by the relationship between cubulations (or Lehmer codes) and triviality of Kazhdan--Lusztig polynomials~\cite{KL1}. For $W$ a finite Weyl group, these concepts also relate to rational smoothness of Schubert varieties (see, for example,~\cite{Bolognini2025,SentinelliZatti,GaetzGao}). We follow the normalization of the Kazhdan--Lusztig polynomials $\Pxy = \Pxy(q)$ from, for example, the reference~\cite{BjoernerBrenti}, and thus define $\Pxy$ to be \emph{trivial} if $\Pxy = 1$. Using the rank function of a cubical lattice, we calculate the Poincar\'e polynomial of the Bruhat interval~$[1,y]$ in the case that its Bruhat graph can be cubulated; see Proposition~\ref{prop:cubical-implies-quantum}. Combining this with results of Carrell--Peterson~\cite{Carrell} and Elias--Williamson~\cite{EliasWilliamson} yields:

\begin{restatable*}{theoremx}{MainThmA}%
\label{thm:Cubical=>Trivial}
  Let $(W,S)$ be an irreducible Coxeter system with $S$ finite, and let $y \in W$. If the Bruhat graph for $[1,y]$ can be cubulated, then $\Pxy = 1$ for all $x \leq y$.
\end{restatable*}

The majority of this paper investigates the converse to Theorem~\ref{thm:Cubical=>Trivial}. As explained further below, this converse is false in general; however, it does hold in certain contexts.  For example, in the special cases where either $y$ uses every simple reflection at most once, or $y$ is an arbitrary element in type $\tilde{A}_1$ or type $I_2(m)$ for $m \geq 3$, then the Bruhat graph for $[1,y]$ can be cubulated and $P_{x,y} = 1$ for all $x \leq y$; see Section \ref{sec:exs} for details. 

If $W$ is finite with longest element $w_0$, then $P_{x,w_0} = 1$ for all $x \in W$; see Exercise 7.14 in~\cite{Humphreys}. A Lehmer code was constructed in types $A_n$, $B_n/C_n$, $D_n$, and $H_3$  in~\cite[Sections 5.2--5.3]{Bolognini2025}. We give an alternative approach to the corresponding cubulations in types $A_n$ and $B_n/C_n$ in Appendix~\ref{sec:appendix}, where we make explicit use of a normal form forest of paths. (Our techniques do not extend to type~$D_n$, since there is no normal form forest of paths in this type.) From our inductive construction in type $A_n$ (respectively, $B_n/C_n$) with $S = \{ s_1, \dots, s_n\}$, it is immediate that the restriction of this cubulation to the standard parabolic subgroup of~$W$ of type $A_{n-1}$ (respectively, $B_{n-1}/C_{n-1}$) generated by $\{s_1, \dots, s_{n-1} \}$ is a cubulation of the Bruhat graph for this subgroup.

In addition, via a computation by the first author available at \cite{code}, we have verified the converse to Theorem~\ref{thm:Cubical=>Trivial} for all elements $y \in W$ in the following types: $A_3$, $A_4$, $B_3$, $B_4$, $D_4$, and $H_3$.
For the element $y = w_0$, we found using this same code that the graph $[1,w_0]$  \emph{cannot} be cubulated in types $E_6$, $F_4$, or $H_4$. In type $F_4$, this result was proven independently by Sentinelli and Zatti~\cite{SentinelliZatti} using similar methods.
We remark that the subsequent result \cite[Theorem 1.8]{GaetzGao} proves that for $(W,S)$ of finite type other than $E$, $F_4$,  or $H_4$, the converse to Theorem~\ref{thm:Cubical=>Trivial} holds for all $y \in W$, while in types $F_4$ and $H_4$ this converse holds if and only if $y \neq w_0$. In particular, \cite[Theorem 1.8]{GaetzGao} together with the work of \cite{Bolognini2025} answers a question about cubulation in type $D_n$ which we had posed in the first version of this paper.

We prove in Proposition~\ref{prop:embedding} that if the converse to Theorem~\ref{thm:Cubical=>Trivial} fails for some Coxeter system $(W,S)$, then it also fails for all Coxeter systems containing $(W,S)$ as a subsystem. We thus obtain the following ``poison subsystem'' result in Section \ref{sec:embedding}.

\begin{restatable*}{theoremx}{MainTheoremB}\label{thm:E7E8}
If $(W,S)$ has a subsystem of type $E_6$, $F_4$, or $H_4$, then there exists an element $y \in W$ such that $P_{x,y}=1$ for all $x \leq y$, but the Bruhat graph for $[1,y]$ cannot be cubulated. In particular, the converse to Theorem~\ref{thm:Cubical=>Trivial} does not hold in types $E_7$ or $E_8$.
\end{restatable*}

Note that Theorem~\ref{thm:E7E8} puts strong restrictions on the simply-laced Coxeter systems (finite or infinite) for which the converse to Theorem~\ref{thm:Cubical=>Trivial} might hold; namely, their Dynkin diagrams cannot contain any $E_6$ subdiagram. 

The remaining results of this paper concern infinite Coxeter systems. 
A Coxeter system $(W,S)$ is said to be \emph{minimal nonspherical} if $W$ is infinite, but every proper parabolic subgroup of~$W$ is finite. For example, any irreducible affine Coxeter system is minimal nonspherical. All other minimal nonspherical Coxeter systems are reflection groups of hyperbolic space; see Remark~\ref{rmk:minimal}. Utilizing a careful study of volume growth and Poincar\'e series in Coxeter groups, we obtain the following statement in Section~\ref{sec:CubulationGrowth}.

\begin{restatable*}{theoremx}{MainTheoremC}%
\label{thm:infinite} 
Let $(W,S)$ be a minimal nonspherical Coxeter system. If there are infinitely many distinct elements $y \in W$ such that the Bruhat graph $[1,y]$ can be cubulated, then $(W,S)$ is of type $\tilde{A}_n$ for some $n \geq 1$.
\end{restatable*}

Our final result provides one complete example of an infinite Coxeter system where the converse to Theorem~\ref{thm:Cubical=>Trivial} holds. We use the explicit formulas for Kazhdan--Lusztig polynomials in type $\tilde{A}_2$ provided by Libedinsky--Patimo~\cite{LibedinskyPatimo} and Burrull--Libedinsky--Plaza~\cite{BurrullLibedinskyPlaza} to give a constructive proof of the following statement, in Section \ref{sec:A2tilde}.

\begin{restatable*}{theoremx}{MainTheoremD}%
\label{thm:A2tilde} 
Suppose $(W,S)$ is of type $\tilde{A}_2$. Then for all (%
i.e.~the infinitely many) $y \in W$ such that $\Pxy = 1$ whenever $x \leq y$, the Bruhat graph for $[1,y]$ can be cubulated.  That is, the converse to Theorem~\ref{thm:Cubical=>Trivial} holds in type $\tilde{A}_2$.
\end{restatable*}

\subsection{Organization of the paper}

In Section~\ref{sec:preliminaries}, we recall key concepts for Coxeter systems, directed graphs, formal power series and growth in finitely generated groups.  Section \ref{sec:graphs} then introduces our main new concept, the cubulation of a directed graph. We relate cubulation to Lehmer codes and Poincar\'e polynomials, and establish Theorem~\ref{thm:Cubical=>Trivial}. 
Section \ref{sec:converse} investigates several special cases of the converse to Theorem~\ref{thm:Cubical=>Trivial} and discusses our computational results, as well as proving Theorem \ref{thm:E7E8}.  
Section \ref{sec:CubulationGrowth} proves Theorem \ref{thm:infinite}, and we provide a constructive proof of Theorem \ref{thm:A2tilde}  in Section \ref{sec:A2tilde}.

\subsection{Acknowledgements}

We thank Geordie Williamson for suggesting AB investigate Kazhdan--Lusztig polynomials for certain hyperbolic reflection groups, and for several helpful discussions.  We are grateful to the authors of~\cite{Bolognini2025,SentinelliZatti,GaetzGao} for making the connection between Lehmer codes and our notion of cubulation. We thank Haverford College for supporting a visit by AT in December 2022. Part of this work was completed while EM was a Member in the School of Mathematics at the Institute for Advanced Study, and we are grateful for those excellent working conditions.


\section{Preliminaries}\label{sec:preliminaries}

This section presents the relevant background from several different areas of mathematics.
Section \ref{sec:CoxeterBruhat} recalls basic notions related to Coxeter systems and their partial orders. Section~\ref{sec:directed} reviews directed graphs, including those associated to Coxeter systems. In Section~\ref{sec:powerSeries} we briefly review formal power series, and in Section~\ref{sec:poincare} we recall background on Poincar\'e series for Coxeter systems. Section~\ref{sec:growth} recalls the general theory of growth in finitely generated groups, and finally in Section~\ref{sec:growthCoxeter} we review growth in Coxeter groups. 

\subsection{Coxeter systems and partial orders}\label{sec:CoxeterBruhat}

In this section, we briefly review our notation for Coxeter systems and two natural partial orders they admit.

Throughout this work, $(W,S)$ is a Coxeter system with finite generating set $S = \{ s_1, \dots, s_n\}$. We say that $(W,S)$ is a \emph{finite} Coxeter system if the group~$W$ is finite. Denote by $[n] = \{1,\dots,n\}$ and by $\N = \{0, 1, 2, \dots \}$. %

We write $\ell$ for the word length on $W$ with respect to $S$. Given $x \in W$ such that $\ell(x) = k$, any product of the form $x = s_{i_1}s_{i_2}\cdots s_{i_k}$ where $s_{i_j} \in S$ is a \emph{reduced expression}. The \emph{support} of an element $x \in W$ is the set of elements of $S$ which appear in some (hence any) reduced expression for $x$. 
The empty word is the identity in $W$, which we denote by 1. 
We write $\leq_S$ for the (right) \emph{weak order} on $(W,S)$. This partial order is generated by the relations $x \leq_S xs$ for all $x \in W$ and $s \in S$ such that $\ell(xs) = \ell(x) + 1$; see, for example, \cite[Def.~3.1.1]{BjoernerBrenti}. Note that since $s \in S$, the condition $\ell(xs) = \ell(x)+1$ is equivalent to $\ell(xs) > \ell(x)$.

We write $\cT = \cT(W,S)$ for the set of \emph{reflections} in $(W,S)$; that is 
\[\cT = \{ xsx^{-1} \mid x \in W,\; s \in S \}.\] 
We write $\leq$ for the (right) \emph{Bruhat order} on $(W,S)$; this is sometimes referred to as \emph{strong order}, to contrast with the weak order on $(W,S)$. Recall that the Bruhat order is generated by $x \leq xt$ for all $x \in W$ and $t \in \cT$ such that $\ell(xt) > \ell(x)$; see, for example, \cite[Def.~2.1.1]{BjoernerBrenti}. We will also sometimes use the following characterization of Bruhat order via subwords: if $x, y \in W$, then $x \leq y$ if and only if for every reduced expression $s_{i_1} \cdots s_{i_k}$ for $y$, there exists a reduced expression for $x$ which is a (possibly non-consecutive) subword of $s_{i_1} \cdots s_{i_k}$; see, for example, \cite[Theorem 2.2.2]{BjoernerBrenti}. We write $[x,y]$ for intervals in Bruhat order. Note that $[x,y]$ is a graded poset, with rank function given by the word length $\ell$.

\subsection{Directed graphs}\label{sec:directed}

In this section, we review basic terminology about directed graphs and recall two important examples: the Cayley graph and the Bruhat graph for a Coxeter system $(W,S)$.

Let $\cD$ be a directed graph. We write $V(\cD)$ for the vertex set of $\cD$ and $E(\cD)$ for the edge set of $\cD$. All of the directed graphs $\cD$ appearing in this work will be \emph{simple}, meaning that all edges have distinct start- and end-vertices, and that for any two distinct vertices $u,v \in V(\cD)$, there is at most one edge whose endpoints are $\{u,v\}$. Hence, we regard $E(\cD)$ as a set of ordered pairs $(u,v) \in V(\cD) \times V(\cD)$ of distinct elements $u \neq v$ of $V(\cD)$.

We will often consider the following two kinds of subgraphs of directed graphs.

\begin{definition}[Induced and spanning subgraphs]\label{defn:subgraph} Let $\cD$ be a directed graph.
\begin{enumerate}
\item Let $V'$ be a subset of $V(\cD)$. The \emph{subgraph induced by~$V'$} is the subgraph of $\cD$ with vertex set $V'$ and edge set $\{ (u,v) \in E(\cD) \mid u, v \in V' \}$.
\item Let $\cD'$ be a subgraph of $\cD$. We say that $\cD'$ is a \emph{spanning subgraph} of $\cD$, or that $\cD'$ \emph{spans} $\cD$, if $V(\cD') = V(\cD)$. 
\end{enumerate}
\end{definition}

Note that if $\cD'$ spans $\cD$, then the edge set $E(\cD')$ will, in general, be a proper subset of $E(\cD)$. 

We will consider several different directed graphs in this work. The following two graphs correspond to the (right) weak order $\leq_S$ and the (right) Bruhat order $\leq$, respectively, which were defined in Section~\ref{sec:CoxeterBruhat}.

The \emph{(directed right) Cayley graph} $\cG = \cG(W,S)$ of $W$ is the directed graph with vertex set $V(\cG) = W$ and an edge from $x \in W$ to $xs \in W$, where $s \in S$, if and only if $\ell(xs) = \ell(x) + 1$. Equivalently, the edges of $\cG$ correspond to the covering relations in the (right) weak order $\leq_S$ on $W$. For any $x, y \in W$ with $x  \leq_S y$, we write $[x,y]_S$ for the subgraph of $\cG$ induced by the vertex set $\{ z \in W \mid x \leq_S z \leq_S y \}$. In other words, $[x,y]_S$ is the Hasse diagram for the partial order $\leq_S$ on the interval between $x$ and $y$.  The Cayley graph for type $A_2$ is seen in Figure \ref{fig:A2CayleyBruhat} by taking only the black edges, oriented upwards.

\begin{figure}[h]
 \resizebox{1.5in}{!}
 {
\begin{overpic}{A2CayleyBruhat}
\put(13,32){$s_1$}
\put(69,32){$s_2$}
\put(8,60){$s_1s_2$}
\put(67,60){$s_2s_1$}
\put(18,90){$s_1s_2s_1=s_2s_1s_2$}
\put(42,6){$e$}
\end{overpic}
}
\caption{\footnotesize{The Bruhat graph for type $A_2$, with all edges oriented upwards. The Hasse diagram for the Bruhat order consists of all edges except for the vertical red one, and the subgraph consisting of the black edges is the Cayley graph in type $A_2$.}}
\label{fig:A2CayleyBruhat}
\end{figure}

The \emph{Bruhat graph} $\cB = \cB(W,S)$ is the directed graph with vertex set $V(\cB) = W$ and an edge from $x \in W$ to $xt \in W$, for $t \in \cT$,  if and only if $\ell(xt) > \ell(x)$. Equivalently, the edges of $\cB$ correspond to the relations which generate the (right) Bruhat order $\leq$ on $W$. The Bruhat graph for type $A_2$ is shown in Figure \ref{fig:A2CayleyBruhat}. For any $x, y \in W$ with $x \leq y$, we write $[x,y]_\cB$ for the subgraph of $\cB$ induced by the vertex set $[x,y]$. By definition, the Hasse diagram for (the restriction of) the Bruhat order on $[x,y]$ spans the subgraph $[x,y]_\cB$. Since $S \subseteq \cT$, the subgraph $[x,y]_S$ of the directed Cayley graph $\cG$ spans this Hasse diagram, and hence also spans $[x,y]_\cB$.

\subsection{Formal power series}\label{sec:powerSeries}

We briefly recall basic definitions and a useful lemma.

Let $F(q) = \sum_{j=0}^\infty a_j q^j$ and $G(q) = \sum_{j=0}^\infty b_j q^j$ 
be (formal) power series, with $a_j, b_j \in \C$. For any $k \in \N$, write 
\[
F[k](q) = \sum_{j=0}^k a_j q^j
\]
for the Taylor polynomial consisting of all terms of $F(q)$ of degree $\leq k$.  
Recall that the (Cauchy) product of $F(q)$ and $G(q)$ is the power series
\[
F(q) G(q) = \sum_{j=0}^\infty c_j q^j \quad \mbox{ where } c_j = \sum_{k=0}^j a_k b_{j-k}.
\]

The next result is a straightforward consequence of these definitions.

\begin{lemma}\label{lem:power-series-product-agreement}
	Let $F(q)$, $G(q)$, and $H(q)$ be power series, and define \[ \Phi(q) = F(q)H(q) \quad \mbox{and} \quad \Gamma(q) = G(q)H(q).\]
	Then for all $k \in \N$, if $F[k](q) = G[k](q)$, we have $\Phi[k](q) = \Gamma[k](q)$. 
\end{lemma}

\subsection{Poincar\'e series for Coxeter systems}\label{sec:poincare}

In this section, we recall the definition and several useful results concerning Poincar\'e series for a Coxeter system $(W,S)$. 

\begin{definition}[Poincar\'e series] 
Given any subset~$A$ of $W$, the formal power series 
\[
A(q) = \sum_{a \in A} q^{\ell(a)}
\]
is called the \emph{Poincar\'e series} of $A$, or, if the set $A$ is finite, the \emph{Poincar\'e polynomial} of $A$. 
\end{definition}

We will consider two special cases. 
First, for any $y \in W$ and any $0 \leq j \leq \ell(y)$, define the nonnegative integer $c_j(y)$ by
$
c_j(y) =  \#\{ x \leq y \mid \ell(x) = j \}.
$
We will write $p_y(q)$ or sometimes just $p_y$ for the Poincar\'e polynomial of the (finite) Bruhat interval $A = [1,y] \subseteq W$. That is,	
\[
	p_y(q) = p_y = \sum_{j=0}^{\ell(y)} c_j(y) q^j.
\]	
We will also consider the Poincar\'e series with $A = W$, that is,
\[
W(q) = \sum_{x \in W} q^{\ell(x)}.
\]
In some of the literature, the Poincar\'e series $W(q)$ is referred to instead as the \emph{growth series} or the \emph{spherical growth series}. To avoid confusion with the volume growth series defined in Section~\ref{sec:growth} below, we avoid this terminology.

Recall that for any positive integer $n$, the \emph{$q$-analog of $n$} is the polynomial given by
\[
  [n]_q \coloneqq \frac{1-q^n}{1-q} = 1 + q + q^2 + \cdots + q^{n-1}.
\]
These polynomials can be used to state the following fundamental result. 

\begin{thm}[see, for example, Theorem~7.1.5 of \cite{BjoernerBrenti}]\label{thm:exponents} Let $(W,S)$ be an irreducible finite Coxeter system, with $S = \{ s_1, \dots, s_n \}$.  Then there are positive integers $e_1, \dots, e_n$ such that 
\[
  W(q) = \prod_{i=1}^n [e_i + 1]_q.
\]
Moreover, 
the order of the finite group $W$ is given by the product $\prod_{i=1}^n (e_i + 1)$, and the number of reflections in $W$ is given by the sum $\sum_{i=1}^n e_i$.
\end{thm}

\begin{definition}[Exponents]\label{defn:exponents} The positive integers $e_1,\dots,e_n$ given by Theorem~\ref{thm:exponents} are called the \emph{exponents} of the irreducible finite Coxeter system $(W,S)$. 
\end{definition}

Suppose now that $(W,S)$ is an irreducible affine Coxeter system of type~$\tilde{X}_n$, where $X \in \{A, B, C, D, E, F, G, H \}$ with appropriate restrictions on $n$. Then we will denote the $(n+1)$ elements of the affine generating set $S$ by $s_0,\dots,s_n$, and write $(W_0,S_0)$ for the associated spherical Coxeter system of type $X_n$, with generating set $S_0 = \{ s_1,\dots, s_n \} \subset S$.

\begin{thm}[Bott, see Theorem 7.1.10 of \cite{BjoernerBrenti}]\label{thm:Bott} Let $(W,S)$ be an irreducible affine Coxeter system with $|S| = n+1 \geq 2$,  and let $e_1,\dots,e_n$ be the exponents of the corresponding irreducible finite Coxeter system. Then the Poincar\'e series of $W$ is given by 
\[
W(q) = \prod_{i=1}^n \frac{[e_i + 1]_q}{1 - q^{e_i}}.
\]
\end{thm}

\begin{corollary}\label{cor:affinePoles} Let $(W,S)$ be an irreducible affine Coxeter system with $|S| = n+1 \geq 2$. The following are equivalent:
\begin{enumerate}
\item all poles of the Poincar\'e series $W(q)$ are at $q = 1$;
\item $(W,S)$ is of type $\tilde{A}_n$.
\end{enumerate}
\end{corollary}

\begin{proof} The exponents in finite type $A_n$ are
$
	\{ e_1,e_2,\ldots,e_n \} = \{ 1,2,\ldots,n \} = [n]
$
and thus the Poincar\'e series in type $\tilde{A}_n$ is $(1-q^{n+1})/(1-q)^{n+1}$. For all other types, we see from \cite[Table I, Appendix A1]{BjoernerBrenti} that $W(q)$ has a pole at a $k^{\mbox{\tiny{th}}}$ root of unity $e^{2\pi i/k}$ with $k \geq 3$, and hence $e^{2\pi i/k} \neq 1$. 
\end{proof}

\subsection{Growth in finitely generated groups}\label{sec:growth}

We now give background on volume growth for finitely generated groups, mostly following the exposition in~\cite[Chapter~6]{Loeh}. Throughout this section, $G$ is any finitely generated group, and $S \subseteq G$ is any finite generating set for $G$. Although we will be applying the theory of volume growth just to Coxeter systems $(W,S)$, where the group $W$ comes with a fixed (finite) generating set $S$, in order to present this material we need to work for now in this greater level of generality.

Define $S^{-1} = \{ s^{-1} \mid s \in S \}$.  (In a Coxeter system $(W,S)$, we have $S^{-1} = S$.) The \emph{word length function} on $G$ with respect to $S$ is the function $\ell_S: G \to \N$ given by, for any $g \in G$, 
\[
\ell_S(g) = \min \{ k \mid g = s_{i_1} \dots s_{i_k} \mbox{ where }s_{i_j} \in S \cup S^{-1} \mbox{ for }1 \leq j \leq k \}.
\]
The corresponding \emph{word metric} $d_S: G \times G \to \N$ is given by $d_S(g,h) = \ell_S(g^{-1}h)$  for any $g, h \in G$.
For any $k \in \N$, define
\[
B_{G,S}(k) = \{ g \in G \mid d_S(1,g) \leq k \} = \{ g \in G \mid \ell_S(g) \leq k \}
\]
to be the (closed) ball in $G$ of radius $k$ around the identity element $1$, with respect to the word metric $d_S$. We note that, since $S$ is finite, the ball $B_{G,S}(k)$ has finitely many elements for every $k \geq 0$. We can thus make the following definitions.

\begin{definition}[Volume growth series]\label{defn:growth} The \emph{volume growth function of $G$ with respect to $S$} is the map $\beta_{G,S}:\N \to \N$ given by
\[
\beta_{G,S}(k) = \# B_{G,S}(k).
\]
The \emph{volume growth series of $G$ with respect to $S$} is the formal power series
\[
	\Gamma_{G,S}(q)
	=
	\sum_{k=0}^\infty \beta_{G,S}(k) q^k.
\]
\end{definition}

Note that if the group $G$ is infinite, the function $\beta_{G,S}:\N \to \N$ is strictly increasing, meaning that for all $k \in \N$, we have $\beta_{G,S}(k) < \beta_{G,S}(k+1)$.

We next recall a partial order and equivalence relation on \emph{generalized growth functions}, which are just strictly increasing functions $\R_{\geq 0} \to \R_{\geq 0}$. By~\cite[Example 6.2.3]{Loeh}, if~$G$ is infinite then for any finite generating set $S$ for $G$, the volume growth function $\beta_{G,S}$ induces a generalized growth function, given by $r \mapsto \beta_{G,S}(\lceil r \rceil)$  for all $r \geq 0$. Let $\beta_1$ and $\beta_2$ be generalized growth functions. We say that $\beta_1$ is \emph{quasi-dominated} by $\beta_2$, denoted $\beta_1 \prec \beta_2$, if there exists $c \in \N$ so that for all $r \geq 0$,
\[
\beta_1(r) \leq c \beta_2(c r + c) + c.
\]
We then define $\beta_1$ and $\beta_2$ to be \emph{quasi-equivalent}, denoted $\beta_1 \sim \beta_2$, if $\beta_1 \prec \beta_2$ and $\beta_2 \prec \beta_1$.

Now let $G_1$ and $G_2$ be infinite, finitely generated groups, with finite generating sets $S_1$ and~$S_2$, respectively. We extend the  definitions in the previous paragraph to the volume growth functions $\beta_{G_i,S_i}:\N \to \N$ for $i = 1,2$, by considering the associated generalized growth functions $\R_{\geq 0} \to \R_{\geq 0}$. Explicitly, we have $\beta_{G_1,S_1} \prec \beta_{G_2,S_2}$ if and only if there exists $c \in \N$ so that for all $k \in \N$,
\[
\beta_{G_1,S_1}(k) \leq c \beta_{G_2,S_2}(c k + c) + c.
\]

\begin{lemma}[Section 6.2.1 of~\cite{Loeh}]\label{lem:growth} 
 The relation $\sim$ is an equivalence relation on generalized growth functions and hence on volume growth functions of infinite, finitely generated groups. Moreover, the relation of quasi-domination induces a partial order on the corresponding equivalence classes.
\end{lemma}

We now give a special case of Proposition 6.2.4 of~\cite{Loeh}.

\begin{prop}\label{prop:growthQI} Let $G$ be an infinite, finitely generated group, and let~$H$ be a finite-index subgroup of $G$. Then for any finite generating set $S$ for $G$ and any finite generating set $T$ for~$H$, the volume growth functions $\beta_{G,S}$ and $\beta_{H,T}$ are quasi-equivalent. In particular, for any two finite generating sets $S_1$ and $S_2$ for $G$, we have $\beta_{G,S_1} \sim \beta_{G,S_2}$.
\end{prop}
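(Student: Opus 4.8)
The quickest route is to invoke two facts already prepared in the background cited above: a finite-index subgroup $H$ of a finitely generated group $G$ is quasi-isometric to $G$ (\cite[Cor.~5.4.5]{Loeh}), and quasi-isometric finitely generated groups have quasi-equivalent volume growth functions, which is exactly \cite[Prop.~6.2.4]{Loeh} --- our statement being the asserted special case. So in principle the proof is a one-line citation. For completeness I would also spell out the direct argument with explicit constants, since it is short and self-contained, and the ``in particular'' clause is then the special case $H = G$, $S = S_1$, $T = S_2$.

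The plan for the direct argument is as follows. First I would fix coset representatives $g_1,\dots,g_m$ for the right cosets of $H$ in $G$ and set $R = \max_i \ell_S(g_i)$; then every $g \in G$ satisfies $d_S(g,h) \le R$ for some $h \in H$, because writing $g = h g_i$ gives $g^{-1}h = g_i^{-1}$ and hence $d_S(g,h) = \ell_S(g_i^{-1}) \le R$. Next, with $\lambda = \max_{t \in T}\ell_S(t) \ge 1$, replacing each letter of a $T$-geodesic word for $w$ by an $S$-word of length $\le \lambda$ yields $\ell_S(w) \le \lambda\,\ell_T(w)$ for all $w \in H$.

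The one genuinely nontrivial estimate --- and the only place where the finiteness of $[G:H]$ is essential --- is the reverse inequality $\ell_T(w) \le D\,\ell_S(w)$ for all $w \in H$, for a suitable constant $D$. To obtain it, given an $S$-geodesic word $s_1\cdots s_n$ for $w \in H$, I would choose for each partial product $p_i = s_1\cdots s_i$ an element $h_i \in H$ with $d_S(p_i,h_i) \le R$, taking $h_0 = 1$ and $h_n = w$; then each $h_{i-1}^{-1}h_i$ lies in the \emph{finite} set $H \cap B_{G,S}(2R+1)$, so telescoping gives $\ell_T(w) \le \sum_{i=1}^n \ell_T(h_{i-1}^{-1}h_i) \le Dn$ with $D = \max\{\ell_T(v) : v \in H \cap B_{G,S}(2R+1)\}$. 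This ``path-straightening'' step is the part I expect to require any thought; the other two inequalities are immediate from the definitions in Section~\ref{sec:growth}.

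Finally I would convert these three estimates into the two quasi-domination statements. From $\ell_S \le \lambda\,\ell_T$ on $H$ we get $B_{H,T}(k) \subseteq B_{G,S}(\lambda k)$, hence $\beta_{H,T}(k) \le \beta_{G,S}(\lambda k)$ and so $\beta_{H,T} \prec \beta_{G,S}$. Conversely, covering $B_{G,S}(k)$ by the $R$-balls centered at the points of $H \cap B_{G,S}(k+R)$, and bounding the size of that set by $\beta_{H,T}(D(k+R))$ using $\ell_T \le D\,\ell_S$ on $H$, together with the fact that each $R$-ball in $(G,d_S)$ has exactly $\beta_{G,S}(R)$ elements, yields $\beta_{G,S}(k) \le \beta_{G,S}(R)\cdot\beta_{H,T}(D(k+R))$, hence $\beta_{G,S} \prec \beta_{H,T}$. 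Combining the two gives $\beta_{G,S} \sim \beta_{H,T}$, as required.
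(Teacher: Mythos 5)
Your first paragraph is exactly the paper's own route: the statement is presented there as a special case of \cite[Prop.~6.2.4]{Loeh}, combined with the fact that a finite-index subgroup is quasi-isometric to the ambient group (\cite[Def.~5.1.6, Cor.~5.4.5]{Loeh}), so the citation argument matches the paper and is correct. Your supplementary direct argument (the Milnor--\v{S}varc-style path-straightening giving $\ell_T \leq D\,\ell_S$ on $H$, together with the two covering estimates) is also correct and goes beyond what the paper records, but it is not needed for agreement with the paper's proof.
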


We can hence make the following important definition.

\begin{definition}[Polynomial growth] Let $G$ be an infinite, finitely generated group. We say that $G$ has \emph{polynomial growth} if for some (hence any) finite generating set $S$ for $G$, there is a positive integer $d$ such that $\beta_{G,S} \prec (r \mapsto r^d)$.
\end{definition}

We will use the following standard result, which follows from Proposition~\ref{prop:growthQI} and  the discussion of $\Z^n$ in \cite[Section 6.1]{Loeh}.

\begin{corollary}\label{cor:vZn} Let $G$ be an infinite, finitely generated group. If $G$ has a finite-index subgroup $H \cong \Z^n$, for some integer $n \in \N$, then $G$ has polynomial growth.
\end{corollary}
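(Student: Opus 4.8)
The plan is to chain together the two cited ingredients, with no new ideas required. First I would dispose of degeneracy: since $G$ is infinite and $H$ has finite index in $G$, the subgroup $H \cong \Z^n$ is itself infinite, so $n \geq 1$; in particular $d := n$ is a \emph{positive} integer, which is what the definition of polynomial growth demands. Next, as observed in the text immediately before Proposition~\ref{prop:growthQI}, a finite-index subgroup of a finitely generated group is finitely generated, so I may fix a finite generating set $T$ for $H$ (for instance, the standard generating set of $\Z^n$ under the isomorphism $H \cong \Z^n$). By Example~\ref{eg:poly}, $\Z^n$ has polynomial growth, which unwinds to the statement that $\beta_{H,T} \prec (r \mapsto r^n)$.

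Then I would fix an arbitrary finite generating set $S$ for $G$ and apply Proposition~\ref{prop:growthQI} to the pair $(G,H)$ together with the generating sets $S$ and $T$: this yields $\beta_{G,S} \sim \beta_{H,T}$, and in particular $\beta_{G,S} \prec \beta_{H,T}$. Since quasi-domination $\prec$ is transitive---it induces a partial order on quasi-equivalence classes by Lemma~\ref{lem:growth}(4)---combining this with $\beta_{H,T} \prec (r \mapsto r^n)$ gives $\beta_{G,S} \prec (r \mapsto r^n)$. As $S$ was arbitrary, this is precisely the assertion that $G$ has polynomial growth, completing the argument.

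I do not expect any genuine obstacle here: the corollary is a bookkeeping consequence of Proposition~\ref{prop:growthQI} and Example~\ref{eg:poly}. The only two points warranting a word of care are (i) noting that $n \geq 1$, so that the exponent appearing in the definition of polynomial growth is positive as required, and (ii) being careful to invoke only the direction $\beta_{G,S} \prec \beta_{H,T}$ of the quasi-equivalence, and to cite the transitivity of $\prec$ (via Lemma~\ref{lem:growth}(4)) when composing the two estimates.
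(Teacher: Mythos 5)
Your proof is correct and follows the same route the paper intends: \cref{cor:vZn} is stated there as an immediate consequence of Proposition~\ref{prop:growthQI} and Example~\ref{eg:poly}, which is exactly the chain you assemble, with the observations about $n \geq 1$ and transitivity of $\prec$ being sensible added care. Nothing further is needed.
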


\subsection{Growth in Coxeter groups}\label{sec:growthCoxeter}

In this section we record some useful results for volume growth in the setting of Coxeter groups. 

Let $(W,S)$ be any Coxeter system (with $S$ finite). Let $W(q) = \sum_{x \in W} q^{\ell(x)} $ be the Poincar\'e series for $W$, as defined in Section~\ref{sec:poincare},  and let $\G_{W,S}(q)$ be the volume growth series for $W$ with respect to the generating set $S$, as given by Definition~\ref{defn:growth}. 
The series $W(q)$ and $\G_{W,S}(q)$ are related via the following easy observation (which, however, we could not find in the literature).

\begin{lemma}\label{lem:PoincareGrowth} 
Let $(W,S)$ be a Coxeter system. Then $
W(q) = (1 - q) \G_{W,S}(q)$.
\end{lemma}

Combining Lemma~\ref{lem:PoincareGrowth} with Theorem~\ref{thm:Bott} and Corollary~\ref{cor:affinePoles} above, we obtain the following.

\begin{corollary}\label{cor:affineGrowth} 
Let $(W,S)$ be an irreducible affine Coxeter system with $|S| = n+1 \geq 2$. Then the volume growth series $\G_{W,S}(q)$ is rational, and the following are equivalent:
\begin{enumerate}
\item all poles of $\G_{W,S}(q)$ are at $q = 1$;
\item $(W,S)$ is of type $\tilde{A}_n$.
\end{enumerate}
\end{corollary}

Next, we record the well-known fact that affine Coxeter groups have polynomial growth.

\begin{lemma}\label{lem:affinePoly} If $(W,S)$ is an irreducible affine Coxeter system, $W$ has polynomial growth.
\end{lemma}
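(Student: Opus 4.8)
The plan is to reduce the statement to Corollary~\ref{cor:vZn} by invoking the classical structure theory of irreducible affine Coxeter groups. Recall that if $(W,S)$ is irreducible affine with $|S| = n+1$, then $W$ admits a semidirect product decomposition $W = Q \rtimes W_0$, where $W_0$ is the associated finite (irreducible) Weyl group and $Q$ is the translation subgroup, a free abelian group of rank $n$ isomorphic to the coroot lattice, on which $W_0$ acts faithfully; see, for example, \cite[Ch.~4]{Humphreys} or \cite[Ch.~VI, \S2]{Bourbaki4-6}. In particular $n \geq 1$, since an irreducible affine system has at least two generators and its translation subgroup is nontrivial.

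First I would observe that $W$ is infinite, because $Q \cong \Z^n$ with $n \geq 1$ is infinite, and that $W$ is finitely generated, being generated by the finite set $S$. Next, since $W_0$ is finite, the subgroup $Q$ has finite index $[W : Q] = |W_0| < \infty$ in $W$, and $Q \cong \Z^n$. Finally, applying Corollary~\ref{cor:vZn} with $G = W$ and the finite-index subgroup $H = Q \cong \Z^n$ yields at once that $W$ has polynomial growth, completing the proof.

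I expect the only real content here to be the citation of the translation-lattice description of affine Coxeter groups; everything else is a direct appeal to Corollary~\ref{cor:vZn}. There is no genuine obstacle, though one should be slightly careful to state that $Q$ is genuinely free abelian of finite rank (rather than merely abelian) and that its index equals $|W_0|$, both of which are part of the standard structure theorem and require no new argument.
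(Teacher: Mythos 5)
Your proposal is correct and is essentially the paper's own argument: the paper likewise notes that the translation subgroup of $W$ is free abelian of rank $n$ and of finite index, and then applies Corollary~\ref{cor:vZn}. The extra detail you supply (the semidirect product decomposition and the index being $|W_0|$) is standard and consistent with the paper's one-line proof.
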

\begin{proof} If $|S| = n+1$ then the translation subgroup of $W$ is free abelian of rank $n$, and has finite index in $W$. Hence by Corollary~\ref{cor:vZn}, the group $W$ has polynomial growth.
\end{proof}

Finally, we use a result of Terragni~\cite{Terragni} to give a short proof of the following converse to Lemma~\ref{lem:affinePoly}. We expect this statement is known to experts, and that it could be proved by other means, but could not find it written down explicitly. For $k \in \N$, denote by $c_k$ the coefficient of $q^k$ in the Poincar\'e series $W(q)$. The \emph{(exponential) growth rate} of $(W,S)$ is then defined by $\omega(W,S) = \lim \sup_k \sqrt[k]{c_k}$. 

\begin{thm}[Polynomial growth implies affine]\label{thm:polyAffine} Let $(W,S)$ be an irreducible Coxeter system. If $W$ is infinite and has polynomial growth, then $(W,S)$ is (irreducible) affine.
\end{thm}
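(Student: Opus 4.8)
The plan is to prove the contrapositive, passing through the exponential growth rate $\omega(W,S)$ introduced just above the statement. So suppose $(W,S)$ is irreducible with $W$ infinite, and assume towards a contradiction that $(W,S)$ is \emph{not} affine. Since $W$ is infinite, $(W,S)$ is in particular not of spherical type, so $(W,S)$ is an irreducible Coxeter system that is neither spherical nor affine. For such systems, Terragni~\cite{Terragni} establishes a uniform lower bound on the growth rate; in particular $\omega(W,S) > 1$. (Terragni's result is sharper, bounding $\omega(W,S)$ away from $1$ by an explicit constant, but strict inequality is all that is needed here.)

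It then remains to observe that $\omega(W,S) > 1$ is incompatible with polynomial growth. Recall that $c_k = \#\{x \in W \mid \ell(x) = k\}$ is the coefficient of $z^k$ in the Poincar\'e series $W(z)$, and that $\beta_{W,S}(k) = \sum_{j=0}^{k} c_j \geq c_k$ by definition; equivalently, one may invoke \cref{lem:PoincareGrowth} to relate $W(z)$ and $\G_{W,S}(z)$. Fix a real number $\lambda$ with $1 < \lambda < \omega(W,S)$. By the definition of $\limsup$, the inequality $\sqrt[k]{c_k} > \lambda$, and hence $\beta_{W,S}(k) \geq c_k > \lambda^k$, holds for infinitely many $k \in \N$. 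Since $\lambda > 1$, no polynomial in $k$ can dominate $\lambda^k$ along this subsequence, so $\beta_{W,S}$ is not quasi-dominated by any monomial $r \mapsto r^d$; that is, $W$ does not have polynomial growth, contradicting the hypothesis. Therefore $(W,S)$ is affine, and since $(W,S)$ is irreducible by assumption, it is irreducible affine, as claimed.

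The substance of the argument is concentrated entirely in the invocation of Terragni's theorem; the remaining step --- converting ``$\omega(W,S) > 1$'' into ``$\beta_{W,S}$ is not polynomially bounded'' --- is elementary bookkeeping with the definitions of growth rate and polynomial growth recalled in \cref{sec:growth}. The one point deserving care is quoting Terragni's result in exactly the form ``an irreducible Coxeter system that is neither spherical nor affine has $\omega(W,S) > 1$'', together with the standard fact that $W$ infinite forces $(W,S)$ to be non-spherical. Here it is essential to use the irreducibility hypothesis: the conclusion genuinely fails without it, as for instance $\tilde{A}_1 \times \tilde{A}_1$ is an infinite, non-irreducible Coxeter group that is virtually $\Z^2$ and hence has polynomial growth by \cref{cor:vZn}, yet is not irreducible affine.
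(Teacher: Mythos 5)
Your proof is correct and follows essentially the same route as the paper: both rest on Terragni's Theorem~(B), which gives $\omega(W,S) \geq \tau > 1$ for infinite non-affine systems, combined with the observation that polynomial growth forces $\omega(W,S) \leq 1$. The only real difference is that you write out the elementary $\limsup$ argument for this latter step, which the paper simply cites from the introduction of Terragni's paper.
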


\begin{proof} 
First, as noted in the introduction to~\cite{Terragni}, if $(W,S)$ has polynomial growth then $\omega(W,S) \leq 1$. But by Theorem (B) of~\cite{Terragni}, if $(W,S)$ is infinite non-affine then $\omega(W,S) \geq \tau$, where $\tau = 1.13\dots$ is a specified algebraic integer (which equals the growth rate for $(W,S)$ of type $E_{10}$). The result follows.
\end{proof}


\section{Cubulation}\label{sec:graphs}

We introduce our main new concept in \cref{sec:cubical}: this is the notion of a cubical lattice, which forms a poset graded by the $L^1$-norm on $\Z^N$.
\Cref{sec:cubulationPoincare} introduces our
terminology of cubulation, and relates the cubulation of subgraphs $[1,y]_\cB$ of the Bruhat graph to
\textit{Lehmer codes} (which we show equivalent to cubulations) and to the Poincar\'e polynomial $p_y$. We prove Theorem~\ref{thm:Cubical=>Trivial},
which relates cubulations to triviality of Kazhdan--Lusztig polynomials, in Section~\ref{sec:KL}.


\subsection{Cubical lattices}\label{sec:cubical}

We now introduce a special family of directed graphs which we call \emph{cubical lattices}, and discuss their structure as a graded poset. Recall that by $\N$ we mean the set of non-negative integers, i.e.~$\N = \{0,1,2,\ldots\}$.

\begin{definition}[Cubical lattice]\label{defn:cubical}
	Fix a positive integer $N$ and $k_1, \dots, k_N \in \N$. We define $\cC = \cC(k_1,\dots,k_N)$ to be the directed graph with vertex and edge sets
	\[ V(\cC) =	\bigl\{ (m_1,m_2,\ldots,m_N) \in \Z^N \bigm|	0 \leq m_i \leq k_i \text{ for each }i \in \{1,2,\ldots,N\} \bigr\} \]
  and
  \[ E(\cC)  = \bigl\{ (u,v) \in V(\cC) \times V(\cC) \bigm| v-u  = \vec{e}_i \ \text{for some}\ i \in \{1,2,\ldots,N\} \bigr\}, \]
where $\vec{e}_i$ denotes the $i^{\text{th}}$ standard basis vector in $\Z^N$. A directed graph $\cD$ is a \emph{cubical lattice} if $\cD$ is isomorphic to $\cC(k_1, \dots, k_N)$ for some $N \geq 1$ and $k_1,\dots,k_N \in \N$.
\end{definition}

If $k_1 = \dots = k_N = 0$, so that $\cC = \cC(k_1,\dots,k_N)$ is a single vertex (with no edges), we will sometimes write $\cC(0)$ instead of $\cC(0,\dots,0)$. We call $\cC(0)$ the \emph{trivial cubical lattice}. A cubical lattice $\cC(k_1,\dots,k_N)$ with at least one $k_i > 0$ is a \emph{nontrivial cubical lattice}. We note that we do not require any ordering on the parameters $k_1,\dots,k_N$, since it will sometimes be convenient to consider $k_1, \dots, k_N$ which are not (for example) weakly increasing. However, we will also use the following ``canonical form'' for cubical lattices.

\begin{lemma}\label{lem:permute} Any nontrivial cubical lattice $\cC(k_1,\dots,k_N)$ is naturally isomorphic to a cubical lattice $\cC(k_1',\dots,k'_{N'})$ where $k_j' > 0$ for $1 \leq j \leq N'$, and in addition, if desired, $k_1' \leq \dots \leq k'_{N'}$.
\end{lemma}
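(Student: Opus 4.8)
The plan is to reduce the general cubical lattice to canonical form in two steps: first discard the coordinates whose parameter is zero, then permute the surviving coordinates into weakly increasing order. The first step is the only one with any content, since both operations are realized by bijections of vertex sets that I must check are graph isomorphisms.

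First I would handle the zero parameters. Suppose $\cC = \cC(k_1,\dots,k_N)$ is nontrivial, and let $I = \{ i \in [N] \mid k_i > 0 \}$; since $\cC$ is nontrivial, $I \neq \emptyset$, so we may write $I = \{i_1 < \dots < i_{N'}\}$ with $N' \geq 1$. Set $k'_j = k_{i_j} > 0$ for $j \in [N']$. Define $f \colon V(\cC) \to V(\cC(k_1',\dots,k'_{N'}))$ by $f(m_1,\dots,m_N) = (m_{i_1},\dots,m_{i_{N'}})$, i.e.\ forgetting the coordinates outside $I$. This is well-defined because $0 \le m_{i_j} \le k_{i_j} = k'_j$. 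For $i \notin I$ we have $k_i = 0$, hence $m_i = 0$ for every vertex, so no coordinate is actually lost: $f$ is a bijection, with inverse inserting $0$ in each position of $[N] \setminus I$. It remains to see that $f$ carries $E(\cC)$ bijectively onto $E(\cC(k_1',\dots,k'_{N'}))$. An edge of $\cC$ has the form $(u, u + \vec e_i)$ with $i \in [N]$; but again $k_i = 0$ forces the $i$th coordinate of every vertex to be $0$, so there are no edges in direction $\vec e_i$ for $i \notin I$, and every edge has direction $\vec e_i$ with $i \in I$. Under $f$ such an edge maps to $(f(u), f(u) + \vec e_j)$ where $i = i_j$, which is exactly an edge of $\cC(k_1',\dots,k'_{N'})$; conversely every edge of the target is hit, since $f$ is surjective on vertices and the direction $\vec e_j$ pulls back to $\vec e_{i_j}$. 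Hence $f$ is an isomorphism of directed graphs, and the isomorphism is natural in the sense that it only depends on the data $(k_1,\dots,k_N)$.

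Next I would permute into increasing order. Now all parameters are positive; let $\sigma$ be a permutation of $[N']$ with $k'_{\sigma(1)} \le \dots \le k'_{\sigma(N')}$, and define $g \colon V(\cC(k_1',\dots,k'_{N'})) \to V(\cC(k'_{\sigma(1)},\dots,k'_{\sigma(N')}))$ by $g(m_1,\dots,m_{N'}) = (m_{\sigma(1)},\dots,m_{\sigma(N')})$. Permuting coordinates is a bijection respecting the box constraints $0 \le m_i \le k'_i$, and it sends the edge $(u, u+\vec e_i)$ to $(g(u), g(u) + \vec e_{\sigma^{-1}(i)})$, so it carries $E$ bijectively onto $E$ of the target. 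Thus $g$ is an isomorphism of directed graphs. Composing $g \circ f$ proves the ``if desired'' clause; using only $f$ proves the first clause.

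There is no real obstacle here: the lemma is essentially a bookkeeping statement, and the only point requiring a line of justification is that a zero parameter $k_i$ kills both the $i$th coordinate and all edges in direction $\vec e_i$, so deleting that coordinate changes neither the vertex set nor the edge set up to the obvious relabeling. One could alternatively phrase the whole argument at once: any bijection $[N'] \hookrightarrow [N]$ onto $I$ together with a reordering induces the desired isomorphism, but splitting it as above keeps the verification transparent.
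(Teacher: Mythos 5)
Your proof is correct and follows essentially the same route as the paper's: both arguments rest on the two observations that a zero parameter forces the corresponding coordinate (and any edges in that direction) to vanish, so it can be dropped, and that permuting coordinates gives a natural isomorphism of cubical lattices. The only difference is cosmetic — you drop the zero coordinates directly via the index set $I$ and then sort, whereas the paper first permutes the zeros to the front and then drops them — and your more explicit verification of the edge bijections is a fine addition.
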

\begin{proof}
By permuting coordinates in $\Z^N$, we see that for any permutation $\sigma$ of $[N]$, the directed graph $\cC = \cC(k_1,\dots,k_N)$ is naturally isomorphic to $\cC(k_{\sigma(1)},\dots,k_{\sigma(N)})$. Hence, in particular, for some $0 \leq i < N$, the graph $\cC$ is isomorphic to $\cC' = \cC(k_1', \dots, k_N')$ where $k_1' = \dots = k_{i}' = 0$ and $k_j' > 0$ for all $i+1 \leq j \leq N$. Moreover, since $\cC$ and hence $\cC'$ is nontrivial, by dropping the first $i$ coordinates we see that $\cC'$ is naturally isomorphic to $\cC(k_{i+1}',\dots,k_N')$. The result then follows by relabeling the parameters, and, if desired, permuting them so that they are weakly increasing.
\end{proof}

An equivalent formulation of Definition~\ref{defn:cubical} is that the cubical lattice $\cC(k_1, \dots, k_N)$ is the Hasse diagram for the product of the subintervals $[0,k_i]$ of $\Z$, with each such subinterval a poset under the usual ordering. This  leads to the following result.

\begin{lemma}\label{lem:cubicalRank} 
Let $\cC = \cC(k_1, \dots, k_N)$ be a cubical lattice. Then the $L^1$-norm on $\Z^N$ given by
\[ \| (m_1, \dots, m_N) \|_1 = \sum_{i = 1}^N m_i \]
induces the structure of a graded poset on the vertex set $V(\cC)$. Moreover, this is the only possible rank function on $V(\cC)$.
 \end{lemma}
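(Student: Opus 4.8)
The plan is to prove both assertions of \cref{lem:cubicalRank} directly from the product structure described just before the statement. First I would verify that the $L^1$-norm is a rank function: if $(u,v) \in E(\cC)$, then by \cref{defn:cubical} we have $v - u = \vec{e}_i$ for some $i \in [N]$, so $\|v\|_1 - \|u\|_1 = 1$; thus the norm strictly increases along every edge. Since $\cC$ is the Hasse diagram of the product poset $\prod_{i=1}^N [0,k_i]$, every covering relation is exactly an edge of $\cC$, and conversely. Any saturated chain from $(0,\dots,0)$ to $(m_1,\dots,m_N)$ therefore has length $\|(m_1,\dots,m_N)\|_1$, which shows the poset is graded with the $L^1$-norm (up to the additive constant coming from the minimum element) serving as the rank function. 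Equivalently, one may simply cite that a finite product of graded posets is graded with rank function the sum of the factor ranks, and that each $[0,k_i] \subseteq \Z$ is graded with rank $m_i$.

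For uniqueness, the key point is that a rank function on a graded poset with a minimum element (here $\hat 0 = (0,\dots,0)$) is determined once we normalize $\rho(\hat 0) = 0$: indeed, by definition of a graded poset, any rank function $\rho'$ must satisfy $\rho'(v) = \rho'(u) + 1$ whenever $u \lessdot v$, and since $\cC$ is connected (every vertex is joined to $\hat 0$ by a saturated chain), induction on the length of such a chain forces $\rho'(v) = \|v\|_1$ for all $v \in V(\cC)$. So I would spell out: pick any vertex $v$, choose a saturated chain $\hat 0 = v_0 \lessdot v_1 \lessdot \dots \lessdot v_r = v$ (which exists and has $r = \|v\|_1$ by the graded structure already established), and observe $\rho'(v_{t}) - \rho'(v_{t-1}) = 1$ for each $t$, whence $\rho'(v) = r = \|v\|_1$.

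I do not expect any genuine obstacle here; the only thing requiring mild care is being precise about what "rank function" and "graded poset" mean in the paper's conventions — in particular whether one demands $\rho(\hat 0) = 0$ or merely that $\rho$ drop by $1$ along covers, in which case uniqueness holds only up to an additive constant. I would state the uniqueness claim as "the $L^1$-norm is the unique rank function vanishing at the minimal element," or note that the paper's convention (rank function = word length on Bruhat intervals, which vanishes at $1$) already fixes the normalization, so the $L^1$-norm is literally the unique choice. The main step is thus really just the observation that connectivity plus the covering-relation constraint propagates the value of any rank function outward from $\hat 0$.
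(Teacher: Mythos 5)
Your proof is correct, and the uniqueness argument takes a genuinely different route from the paper's. For existence you do essentially what the paper does (the product $\prod_{i=1}^N[0,k_i]$ of graded posets is graded, with the $L^1$-norm as rank), but for uniqueness the paper runs an induction on $\sum_i k_i$: after invoking \cref{lem:permute} to assume $1 \leq k_1 \leq \dots \leq k_N$, it peels off the slab of vertices with first coordinate $k_1$, applies the inductive hypothesis to the sub-lattice $\cC(k_1-1,k_2,\dots,k_N)$, and then forces the rank of each remaining vertex $v=(k_1,m_2,\dots,m_N)$ via the single edge from $v'=(k_1-1,m_2,\dots,m_N)$. Your argument instead propagates the value of any rank function along a saturated chain from $\hat 0=(0,\dots,0)$ to an arbitrary vertex $v$, using that every cover raises the rank by exactly $1$ and that such a chain of length $\|v\|_1$ exists; this is the standard fact that a graded poset with $\hat 0$ admits at most one rank function vanishing at $\hat 0$, and it is shorter and avoids both the induction and the appeal to \cref{lem:permute}. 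What the paper's slab-by-slab induction buys is a purely graph-theoretic bookkeeping that stays inside the directed-graph framework of \cref{defn:cubical}; what yours buys is brevity and a cleaner conceptual statement. Both arguments (the paper's base case $\cC(0)$ included) silently use the normalization that the rank of the minimum is $0$ --- otherwise uniqueness holds only up to an additive constant --- and you are right to flag this explicitly; since the paper's intended rank functions (word length on $[1,y]$) vanish at the minimal element, your normalized statement is exactly what is needed in \cref{prop:cubical-implies-quantum}.
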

\begin{proof} The vertex set of any product of subintervals of $\N$, with the usual ordering on each subinterval, is a graded poset with rank function induced by the $L^1$-norm on $\Z^N$. Hence, viewing the cubical lattice $\cC=\cC(k_1,\dots,k_N)$ as the product of the subintervals $[0,k_i]$, the $L^1$-norm on $\Z^N$ induces the structure of a graded poset on $V(\cC)$. 

For uniqueness, we induct on $\sum_{i=1}^N k_i$. Observe that the result holds trivially when this sum equals $0$, equivalently $\cC = \cC(0)$ is trivial. Now suppose $\cC = \cC(k_1,\dots,k_N)$ is nontrivial. By Lemma~\ref{lem:permute}, we may assume, up to isomorphism of directed graphs (which will preserve any grading), that $1 \leq k_1 \leq k_2 \leq \cdots \leq k_N$. 

Define $V' \subseteq V(\cC)$ to be the set of vertices $(m_1, \dots, m_N) \in V(\cC)$ such that $0 \leq m_1 \leq k_1 - 1$ and $0 \leq m_i \leq k_i$ for $i \in \{2,\dots,N\}$, and let $\cC'$ be the subgraph of $\cC$ induced by $V'$. Then $\cC'$ is naturally isomorphic to the cubical lattice $\cC(k_1 - 1,k_{2},\dots,k_N)$. So by induction, there is a unique rank function on $V(\cC')$, namely that induced by the $L^1$-norm on~$\Z^N$.

Now the vertices of $\cC$ which are not in $\cC'$ are given by \[ V(\cC) \setminus V' = \bigl\{ (k_1,m_2,\dots,m_N) \bigm| 0 \leq m_i \leq k_i \text{ for }i \in \{2,\dots,N\} \bigr\}.\] Let $v = (k_1,m_2,\dots,m_N)$ be in $V(\cC) \setminus V'$, and define $v' = (k_1 - 1,m_2,\dots,m_n)$. Notice that $v' \in V'$, and that there is an edge in $\cC$ from $v'$ to $v$. Hence the only possible rank of $v$ which is compatible with the unique rank function on $V(\cC')$ is
\[
\| v' \|_1 + 1 = \left( (k_1 - 1) + \sum_{i=2}^N m_i \right) + 1 = k_1 + \sum_{i=2}^N m_i .
\]
But this sum equals $\| v \|_1$, which completes the proof.
\end{proof}

In order to further investigate the cubical lattice $\cC = \cC(k_1, \dots, k_N)$ as a graded poset, define \[ d_\cC = \sum_{i=0}^N k_i = \| (k_1,\dots,k_N) \|_1.\] Then by Lemma~\ref{lem:cubicalRank}, the maximum rank of any vertex of $\cC$ is $d_\cC$ (and the unique vertex of rank equal to $d_\cC$ is $(k_1,\dots,k_N)$). For $0 \leq j \leq d_\cC$, we now define natural numbers
\[
b_j(\cC) = \# \bigl\{ v \in V(\cC) \bigm| \| v \|_1 = j \bigr\}
\]
and the polynomial 
\[
g_\cC(q) = \sum_{j=0}^{d_\cC} b_j(\cC) \,q^j.
\]
That is, $b_j(\cC)$ is the number of vertices of the cubical lattice $\cC$ of rank exactly $j$, and $g_\cC(q)$ is the corresponding generating function. 

%

\begin{lemma}\label{lem:cubicalQuantum}
  Let $\cC = \cC(k_1, \dots, k_N)$ be a cubical lattice.
  Then 
  $
		g_\cC(q)=
    \prod_{i=1}^N [k_i + 1]_q.
  $
\end{lemma}
\begin{proof} For $0 \leq j \leq d_\cC = \sum_{i=1}^N k_i$, the coefficient of $q^j$ in the polynomial $g_\cC(q)$ is given by
\begin{align*}
	b_j(\cC)
	&=
	\#
	\{
		v \in V(\cC)
	\mid
		\|v\|_1 = j
	\}
	\\
	&=
	\#
	\left\{
		(m_1,m_2,\ldots,m_N) \in \Z^N
	\ \middle| \ 
		0 \leq m_i \leq k_i
		\text{ and }
		\sum_{i=1}^N m_i = j
	\right\}.
\end{align*}
This final expression for $b_j(\cC)$ is also clearly equal to the coefficient of $q^j$ in the product 
\[
	\bigl(1+q+q^2+ \cdots +q^{k_1}\bigr)
	\bigl(1+q+q^2+ \cdots +q^{k_2}\bigr)
	\cdots
	\bigl(1+q+q^2+ \cdots +q^{k_N}\bigr).
\]
The result follows.
\end{proof}

\subsection{Cubulations, Lehmer codes, and Poincar\'e polynomials}\label{sec:cubulationPoincare}

In this section we introduce our notion of cubulation, and relate cubulation to Lehmer codes as considered in~\cite{Bolognini2025,SentinelliZatti,GaetzGao}. We then use cubulations to determine certain Poincar\'e polynomials.

\begin{definition}
A directed graph $\cD$ \emph{can be cubulated} (alternatively, \emph{admits a cubulation}) if there is a cubical lattice~$\cC$ which is isomorphic to a spanning subgraph of $\cD$. In this case, we may say that $\cD$ is \emph{cubulated} by $\cC$, that $\cC$ \emph{cubulates} $\cD$, or that $\cC$ is a \emph{cubulation} of $\cD$. 
For convienence, we say that a Bruhat interval $[x,y]$ \emph{admits a cubulation} if $[x,y]_\cB$ does.
\end{definition}

See \cref{fig:example-sublattice} in the introduction for an example. We note that there may be more than one way of cubulating a given directed graph $\cD$. 

We now describe the relationship between cubulation and Lehmer codes. Let $(W,S)$ be a finite Coxeter system with exponents $e_1, \dots, e_n$ and longest element $w_0$.  
In their work~\cite{Bolognini2025}, Bolognini and Sentinelli define a \emph{Lehmer code} on $W$ to be a bijection $L$ from~$W$ to the product of intervals $\prod_{i=1}^n \{ 0, 1, \dots, e_i \}$ such that $L^{-1}$ is a poset homomorphism, with respect to the usual partial order on this product of intervals and the Bruhat order $\leq$ on~$W$.
It is clear that the existence of a Lehmer code on $W$ is equivalent to the Bruhat graph $[1,w_0]_\cB$ being cubulated by $\cC(e_1,e_2,\ldots,e_n)$: the Hasse diagram of the product of intervals $\prod_{i=1}^n \{ 0, 1, \dots, e_i \}$ is exactly the cubical lattice $\cC(e_1,e_2,\ldots,e_n)$, and $L^{-1}$ being a poset homomorphism onto $W$  is equivalent to this cubical lattice spanning $[1,w_0]_\cB$. 

More generally, Gaetz and Gao in Section 1.3 of~\cite{GaetzGao} define a \emph{Lehmer code} for~$[1,y]$, where $y \in W$ is arbitrary, to be an order-preserving bijection from a product of chains (in some poset) onto $[1,y]$. Their  motivation includes earlier consideration of such bijections in type~$A_n$~\cite{Gasharov} and for finite Weyl groups~\cite{BilleyFanLosonczy,Billey}. This notion of Lehmer code is easily seen to be equivalent to the Bruhat graph for $[1,y]$ being cubulated by $\cC(k_1,\dots,k_N)$, once each chain is identified with the appropriate subinterval $[0,k_i]$ of $\Z$. The scope of Coxeter groups $W$ considered in this definition of~\cite{GaetzGao} is a little unclear to us, since the focus of that work is on $W$ finite.

Let us return to our setting of an arbitrary Coxeter system $(W,S)$. We conclude this section by determining the Poincar\'e polynomial for an element $y \in W$ such that $[1,y]_\cB$ can be cubulated.  We will use the following result, including its final statement, to prove Theorem~\ref{thm:infinite} in Section~\ref{sec:CubulationGrowth}. Recall from Section~\ref{sec:cubical} that for any cubical lattice $\cC$, we denote by $g_\cC(q)$ the polynomial in which the coefficient of $q^j$ is the number of vertices of $\cC$ of rank exactly $j$.

\begin{prop}\label{prop:cubical-implies-quantum} 
Let $(W,S)$ be any Coxeter system, and let $y \in W$. 
	Suppose that the subgraph $[1,y]_\cB$ of the Bruhat graph $\cB$ is cubulated by $\cC = \cC(k_1,k_2,\ldots, k_N)$, where $N \in \N$ and  $k_1,\dots,k_N \in \N$.
	Then the Poincar\'e polynomial of $[1,y]_\cB$ is given by   $$
    p_y(q)=\prod_{i=1}^N [k_i + 1]_q.		
  $$

	Moreover, if $k_j \geq 1$ for all $1 \leq j \leq N$, then $N$ is the cardinality of the support of $y$.
\end{prop}

\begin{proof} Let $\varphi$ be an isomorphism from $\cC$ to a spanning subgraph of $[1,y]_\cB$. Then since $\varphi(\cC)$ spans $[1,y]_\cB$, we have $V(\varphi(\cC)) = V([1,y]_\cB) = [1,y]$. Now the graded poset $[1,y]$ and hence its spanning subgraph $\varphi(\cC)$ has rank function $\ell$, while by Lemma~\ref{lem:cubicalRank}, there is a unique rank function $\| \cdot \|_1$ on $V(\cC)$.  Thus, for any $v \in V(\cC)$ we have $\ell(\varphi(v)) = \| v \|_1$, and for all $0 \leq j \leq \ell(y)$, we have
\[
\# \bigl\{ x \leq y \bigm| \ell(x) = j \bigr\} = \# \bigl\{ v \in V(\cC) \bigm| \| v \|_1 = j \bigr\}.
\]
This is exactly saying that the coefficient of $q^j$ in the Poincar\'e polynomial $p_y(q)$ is equal to the coefficient of $q^j$ in the polynomial $g_\cC(q)$. In other words, $p_y(q) = g_\cC(q)$. The form of $p_y(q)$ now follows from Lemma~\ref{lem:cubicalQuantum}. 

For the final claim, observe that $s_i \in S$ is in the support of $y$ if and only if $s_i \in [1,y]$, and that the set of elements of $[1,y]$ of word length $1$ is exactly the set $S \cap [1,y]$. Thus, by Lemma~\ref{lem:cubicalRank} again, we have that $s_i \in S$ is in the support of $y$ if and only if the corresponding vertex~$v$ of $\cC(k_1,\dots,k_N)$ satisfies $\| v \|_1 = 1$. If each $k_j \geq 1$, then there are $N$ distinct vertices of $\cC(k_1,\dots,k_N)$ which have rank $1$, namely the standard basis vectors of $\Z^N$, and so the support of $y$ contains $N$ elements.
\end{proof}

\subsection{Cubulation and triviality of Kazhdan--Lusztig polynomials}\label{sec:KL}

In this section, we recall results of Elias and Williamson~\cite{EliasWilliamson} and Carrell and Peterson~\cite{Carrell} and combine these with \Cref{prop:cubical-implies-quantum} to prove Theorem~\ref{thm:Cubical=>Trivial}. 

Recall from the introduction that we follow the conventions of~\cite{BjoernerBrenti} for Kazhdan--Lusztig polynomials.  In these conventions, for $x, y \in W$ the Kazhdan--Lusztig polynomial $P_{x,y} = P_{x,y}(q)$ is a polynomial in $q$ with integer coefficients, so that if $x \not \leq y$, then $\Pxy(q) = 0$.
A fundamental result of Elias and Williamson~\cite{EliasWilliamson} says that the coefficients are in fact non-negative: 

\begin{thm}[Corollary~1.2 of~\cite{EliasWilliamson}] \label{thm:EW}
Let $(W,S)$ be any Coxeter system.  For any $x, y \in W,$ the Kazhdan--Lusztig polynomial satisfies $P_{x,y}(q) \in \Z_{\geq 0}[q]$.
\end{thm}

We say that $\Pxy$ is \emph{trivial} if $\Pxy(q) = 1$ is constant. A result of Carrell and Peterson~\cite{Carrell} provides several equivalent criteria for when Kazhdan--Lusztig polynomials are trivial.  (Note that in~\cite{Carrell} the left Bruhat order is used, but all results hold equally well for the right Bruhat order.) We will only state the equivalences from~\cite{Carrell} that we will use.  Recall that we denote by $p_y$ the Poincar\'e polynomial for the Bruhat interval $[1,y]$. 
A polynomial $f(q)$ of degree $m \geq 0$, given by
\[
	f(q) = a_0 + a_1 q + a_2 q^2 + \dots + a_m q^m,
\]
is said to be \emph{palindromic} if $a_j = a_{m-j}$ for all $0 \leq j \leq m$.

\begin{thm}[Theorem~B of~\cite{Carrell}]\label{thm:palindromic-iff-trivial}
	Let $(W,S)$ be any Coxeter system. Suppose $y \in W$ is such that for each $x \leq y$, the polynomial $\Pxy(q)$ has non-negative coefficients.
	Then the following are equivalent:
	\begin{enumerate}
		\item $\Pxy = 1$ for each $x \leq y$; and
		\item $p_y$ is palindromic.
	\end{enumerate}
\end{thm}

\begin{corollary}\label{cor:palindromic} 
Let $(W,S)$ be any Coxeter system, and let $y \in W$. Then $p_y$ is palindromic if and only if $\Pxy = 1$ for all $x \leq y$.
\end{corollary}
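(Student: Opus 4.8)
The plan is to derive this corollary directly from the two results cited immediately before it, namely Theorem~\ref{thm:EW} (Elias--Williamson non-negativity) and Theorem~\ref{thm:palindromic-iff-trivial} (the Carrell--Peterson equivalences). The key observation is that Theorem~\ref{thm:palindromic-iff-trivial} has a hypothesis --- that $\Pxy(q)$ has non-negative coefficients for each $x \leq y$ --- and Theorem~\ref{thm:EW} shows this hypothesis is \emph{always} satisfied, for every Coxeter system and every pair $x, y \in W$. So the corollary is obtained simply by discharging that hypothesis.

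Concretely, I would fix an arbitrary Coxeter system $(W,S)$ and an arbitrary $y \in W$. First, invoke Theorem~\ref{thm:EW}: for every $x \in W$ (in particular every $x \leq y$), the Kazhdan--Lusztig polynomial $\Pxy(q)$ lies in $\Z_{\geq 0}[q]$, so it has non-negative coefficients. This verifies the standing hypothesis of Theorem~\ref{thm:palindromic-iff-trivial}. Then apply that theorem to conclude that conditions (1) and (4) listed there are equivalent; that is, $\Pxy(q) = 1$ for each $x \leq y$ if and only if $p_y(z)$ is palindromic. This is exactly the statement of the corollary, so the proof is complete.

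There is essentially no obstacle here --- the corollary is a packaging of existing results, and the only thing to be careful about is making sure the quantifiers line up (Theorem~\ref{thm:EW} is stated for all $x,y$, which certainly covers all $x \leq y$, and the palindromicity condition in Theorem~\ref{thm:palindromic-iff-trivial} refers to $p_y$, matching the corollary's notation). I would also remark, as the paper does, that one could equally well route through condition (2) or (3) of Theorem~\ref{thm:palindromic-iff-trivial}, but the palindromicity formulation (4) is the one needed for the later applications in Sections~\ref{sec:spanning=>trivial} and~\ref{sec:reduction}. If a slightly more self-contained write-up is wanted, one sentence recalling that $p_y(z) = \sum_j c_j(y) z^j$ with $c_j(y) = \#\{x \leq y : \ell(x) = j\}$, and noting $\ell(y) = \deg p_y$, suffices to make the palindromicity statement concrete.

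\begin{proof}
By Theorem~\ref{thm:EW}, for every $x, y \in W$ the Kazhdan--Lusztig polynomial $\Pxy(q)$ has non-negative coefficients; in particular this holds for every $x \leq y$. Hence the standing hypothesis of Theorem~\ref{thm:palindromic-iff-trivial} is satisfied for every $y \in W$. Applying the equivalence of conditions (1) and (4) in Theorem~\ref{thm:palindromic-iff-trivial}, we conclude that $\Pxy = 1$ for all $x \leq y$ if and only if $p_y$ is palindromic.
\end{proof}
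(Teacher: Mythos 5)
Your proof is correct and follows exactly the paper's route: use Theorem~\ref{thm:EW} to discharge the non-negativity hypothesis of Theorem~\ref{thm:palindromic-iff-trivial}, then read off the equivalence of conditions (1) and (4). Nothing further is needed.
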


The next statement is elementary.

\begin{lemma}\label{lem:palindromic-closed-under-mult} 
	If $f(q)$ and $g(q)$ are palindromic polynomials, then $f(q)g(q)$ is palindromic. Hence in particular, any product of $q$-analogs is palindromic.
\end{lemma}

From this and \Cref{prop:cubical-implies-quantum}, we immediately obtain the following.

\begin{corollary}\label{cor:spanPal}
	Let $(W,S)$ be any Coxeter system, and let $y \in W$.  If $[1,y]_\cB$ can be cubulated, then $p_y$ is palindromic.
\end{corollary}

We now restate Theorem~\ref{thm:Cubical=>Trivial} from the introduction. Its proof is obtained by combining Corollaries~\ref{cor:spanPal} and~\ref{cor:palindromic} above.

\MainThmA

In the remainder of this work, we consider the converse to Theorem \ref{thm:Cubical=>Trivial}.


\section{Investigating the converse to Theorem~\ref{thm:Cubical=>Trivial}}\label{sec:converse}

In this section, we begin our investigations into the cases in which the converse to Theorem~\ref{thm:Cubical=>Trivial} holds. In Section~\ref{sec:exs}, we consider several special cases where it is straightforward to see that $\Pxy = 1$ for all $x \leq y$, and also easy to see that $[1,y]_\cB$ is spanned by a cubical lattice. We describe our computational results in Section~\ref{sec:computational}. Then in Section~\ref{sec:embedding} we show that if the converse to Theorem~\ref{thm:Cubical=>Trivial} fails for some Coxeter system $(W,S)$, then it fails for every Coxeter system which contains $(W,S)$ as a subsystem, and hence prove Theorem \ref{thm:E7E8}.


\subsection{Several special cases}\label{sec:exs}

This section considers cubulation of $[1,y]_\cB$ in the following special cases: when no simple reflection appears more than once in any reduced expression for $y$, when $(W,S)$ is a dihedral group, and when $y = w_0$ is the longest element in $(W,S)$ an irreducible finite Coxeter system of exceptional type.

\subsubsection{Standard parabolic Coxeter elements}\label{sec:boolean} 

An element $y \in W$ is called \emph{standard parabolic Coxeter} if each simple reflection in $S$ is used at most once in any (equivalently every) reduced expression for $y$.  As the terminology suggests, standard parabolic Coxeter elements are those that are Coxeter in some standard parabolic subgroup of $W$.  (Note that standard parabolic Coxeter elements also appear by other names; for example, they are called \emph{boolean} in some parts of the literature.)

Suppose $y \in W$ is a standard parabolic Coxeter element. Then, for any $x \leq y$, the interval $[x,y]$ is isomorphic as a poset to the Boolean lattice $B_{\ell(y) - \ell(x)}$.  Therefore, \cite[Cor.~6.8]{BrentiInv} says that $\Pxy = 1$. For any $k \in \N$, we write $\cC(1^k)$ for the cubical lattice $\cC(\,\underbrace{1,1,\dots, 1}_{k\ \text{times}}\,)$.

\begin{lemma}\label{lem:boolean}
Let $(W,S)$ be any Coxeter system. Suppose $y \in W$ is a standard parabolic Coxeter element with a reduced expression of the form $y = s_{i_1}\cdots s_{i_k}$.  Then $[1,y]_\cB$ is isomorphic to the cubical lattice $\cC(1^k)$, hence the converse to Theorem \ref{thm:Cubical=>Trivial} holds for this $y \in W$.
\end{lemma}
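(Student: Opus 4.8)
The plan is to exhibit an explicit isomorphism from the cubical lattice $\cC(1^k)$ to the subgraph $[1,y]_\cB$, and then invoke Theorem~\ref{thm:Cubical=>Trivial}. First I would fix a reduced expression $y = s_{i_1} \cdots s_{i_k}$; since $y$ is standard parabolic Coxeter, the indices $i_1, \dots, i_k$ are pairwise distinct, and the support of $y$ is exactly $\{s_{i_1}, \dots, s_{i_k}\}$. The subword characterization of Bruhat order (recalled from \cite[Theorem 2.2.2]{BjoernerBrenti}) then says that $x \leq y$ if and only if $x$ can be written as a subword of $s_{i_1} \cdots s_{i_k}$; because no letter repeats, each such subword is automatically reduced, and distinct subsets of positions give distinct elements of $W$ (two subwords of equal length involving different letter-sets yield elements of different support, and for a fixed support the element is uniquely the product of those commuting-or-not generators in the induced order — more carefully, one shows injectivity by noting $\ell(x)$ equals the number of chosen positions). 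Thus the map sending a subset $T \subseteq [k]$ to the subword of $s_{i_1}\cdots s_{i_k}$ on the positions in $T$ is a bijection from the power set of $[k]$ onto $[1,y]$.

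Next I would identify the power set of $[k]$ with $V(\cC(1^k)) = \{0,1\}^k$ via indicator vectors, and check that this bijection is a graph isomorphism onto $[1,y]_\cB$. An edge of $\cC(1^k)$ goes from $u$ to $v$ exactly when $v - u = \vec{e}_j$ for some $j$, i.e.\ when the corresponding subsets satisfy $T_v = T_u \sqcup \{j\}$. On the Bruhat side, I need that the two corresponding elements $x_u$ and $x_v$ differ by a reflection with $\ell(x_v) > \ell(x_u)$, and conversely that every Bruhat edge within $[1,y]$ arises this way. The forward direction is immediate: $x_v = x_u t$ where $t$ is the reflection obtained by conjugating $s_{i_j}$ by the appropriate prefix of the subword, and $\ell(x_v) = \ell(x_u) + 1 > \ell(x_u)$, so there is a Bruhat edge $x_u \to x_v$. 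For the reverse direction — that $[1,y]_\cB$ has no edges \emph{other} than these — I would use the fact that for standard parabolic Coxeter $y$, the interval $[x,y]$ is isomorphic as a poset to the Boolean lattice $B_{\ell(y)-\ell(x)}$ (cited in the paragraph preceding the lemma), so in particular $[1,y] \cong B_k$ as a poset with rank function $\ell$; combined with Theorem~\ref{thm:palindromic-iff-trivial}(2), which gives $\#\{t \in \cT \mid x < xt \leq y\} = \ell(y) - \ell(x)$, one sees that the total number of Bruhat edges in $[1,y]_\cB$ equals $\sum_{x \leq y}(\ell(y) - \ell(x)) = k\cdot 2^{k-1}$, matching exactly the number of edges in $\cC(1^k)$. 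Since the $k\cdot 2^{k-1}$ edges I produced above are distinct, they account for all of $E([1,y]_\cB)$, so the bijection is an isomorphism of directed graphs.

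Finally, having shown $[1,y]_\cB \cong \cC(1^k)$, the graph $[1,y]_\cB$ is cubulated (by itself), so Theorem~\ref{thm:Cubical=>Trivial} yields $\Pxy = 1$ for all $x \leq y$ — though in fact this was already noted via \cite[Cor.~6.8]{BrentiInv} in the text — and in any case the converse to Theorem~\ref{thm:Cubical=>Trivial} holds trivially for this $y$, since both conditions in the converse are satisfied. I expect the main obstacle to be the careful verification that $[1,y]_\cB$ contains \emph{no extra edges}: the subword bijection and the existence of the "cube" edges are routine, but ruling out additional Bruhat covers (or rather additional Bruhat-graph edges, which need not be covers) requires either the Boolean-interval structure or the edge-count argument via Theorem~\ref{thm:palindromic-iff-trivial}(2). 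I would present the edge-count version as the cleanest route.
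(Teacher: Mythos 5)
Your proof is correct, but the key step -- ruling out extra edges -- is handled quite differently from the paper. The paper simply cites the poset isomorphism $[1,y] \cong B_k$ and then uses the Strong Exchange Property: if $w \leq y$ and $t \in \cT$ with $\ell(wt) < \ell(w)$, then $wt$ is obtained by deleting one letter from a reduced expression of $w$; since $w$ is itself standard parabolic Coxeter, the deleted word is still reduced, so every Bruhat-graph edge inside $[1,y]$ has length difference exactly $1$, i.e.\ $[1,y]_\cB$ coincides with the Hasse diagram of $B_k$, which is $\cC(1^k)$. You instead build the subword bijection explicitly and then count: CP condition (2) from Theorem~\ref{thm:palindromic-iff-trivial} (available because of \cite[Cor.~6.8]{BrentiInv} and Theorem~\ref{thm:EW}) gives $\sum_{x\leq y}(\ell(y)-\ell(x)) = k\cdot 2^{k-1}$ Bruhat edges, matching the $k\cdot 2^{k-1}$ cube edges you exhibit, so there are no others. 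This works and is not circular (Brenti's result is independent of the lemma), but it routes an elementary combinatorial fact through Elias--Williamson positivity, whereas the paper's Strong Exchange argument is self-contained, needs no KL input at all, and gives the stronger structural statement that every Bruhat edge in $[1,y]$ is a cover. Two small points: your parenthetical justification of injectivity via lengths is muddled (the clean argument is that distinct position-sets give distinct letter-sets, hence distinct supports, hence distinct elements), and the reflection realizing the edge is the conjugate of $s_{i_j}$ by the \emph{suffix} of the subword after position $j$ (namely $t = b^{-1}s_{i_j}b$ with $x_v = a s_{i_j} b$, $x_u = ab$), not by a prefix; neither slip affects the validity of the argument.
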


\begin{proof}
For $y = s_{i_1}\cdots s_{i_k}$ standard parabolic Coxeter, the poset $[1,y]$ is isomorphic to the Boolean lattice $B_{\ell(y)} = B_k$. Now clearly, the Hasse diagram for $B_k$ is isomorphic to the cubical lattice $\cC(1^k)$. Thus, the Hasse diagram for $[1,y]$ is isomorphic to $\cC(1^k)$.  

By the Strong Exchange Property, for any element $w = s_{j_1} \cdots s_{j_\ell}$ of the Bruhat interval $[1,y]$ and any reflection $t \in \cT$, we have $wt = s_{j_1} \cdots \widehat{s}_{j_m} \cdots s_{j_\ell}$ for some $m \in [\ell]$.  Since $y$ is standard parabolic Coxeter, so is $w$, and hence $\ell(wt) = \ell(w) -1$.  Therefore, in the case that $y$ is standard parabolic Coxeter, the Bruhat graph $[1,y]$ is equal to the Hasse diagram for $[1,y]$. We conclude that $[1,y]_\cB$ is isomorphic to $\cC(1^k)$, as required.
\end{proof}

For example, the left and middle of Figure~\ref{fig:booleanDihedral} depict the graphs $[1,s_i s_j]_\cB \cong \cC(1,1)$ and $[1,s_i s_js_k]_\cB \cong \cC(1,1,1)$, respectively, where $s_i$, $s_j$, and $s_k$ are three distinct simple generators.

\begin{figure}[ht!]
	\begin{subfigure}[m]{.3\linewidth}\centering
		\includegraphics[page=1]{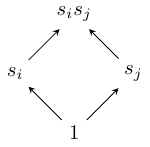}
	\end{subfigure}
	~
	\begin{subfigure}[m]{.3\linewidth}\centering
		\includegraphics[page=2]{fullgraph}
	\end{subfigure}
	~
	\begin{subfigure}[m]{.3\linewidth}\centering
		\includegraphics[page=3]{fullgraph}
	\end{subfigure}
	\caption{\footnotesize{From left to right, we depict a cubulation for standard parabolic Coxeter elements of lengths $2$ and $3$, and for dihedral elements of length $3$.}}\label{fig:booleanDihedral}
\end{figure}

Note the following consequence for elements of short word-length, since every element $y \in W$ such that $\ell(y) \leq 2$ is necessarily standard parabolic Coxeter.  By Exercise~7(a) in Chapter~5 of~\cite{BjoernerBrenti}, we have $\Pxy = 1$ for all $x \leq y$ with $\ell(y) \leq 2$.  Therefore, short word-length provides another context where the converse to Theorem \ref{thm:Cubical=>Trivial} holds.

\begin{corollary}\label{cor:short} 
Let $(W,S)$ be any Coxeter system, and suppose $y \in W$. If $\ell(y) \leq 2$, then $[1,y]_\cB$ is isomorphic to a cubical lattice. In particular, the converse to Theorem \ref{thm:Cubical=>Trivial} holds when $\ell(y) \leq 2$.
\end{corollary}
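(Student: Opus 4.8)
The plan is to observe that every element of word-length at most two is automatically a standard parabolic Coxeter element, and then simply invoke Lemma~\ref{lem:boolean}. Once that observation is in place, essentially nothing remains to prove.

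First I would dispose of the degenerate cases. If $\ell(y) = 0$, then $y = 1$ and $[1,y]_\cB$ is the single-vertex graph with no edges, which is the trivial cubical lattice $\cC(0) = \cC(1^0)$. If $\ell(y) = 1$, then $y = s_i$ for some $i \in [n]$, and this is patently a standard parabolic Coxeter element, with reduced expression of length one. Next, for $\ell(y) = 2$, I would write a reduced expression $y = s_i s_j$; if $s_i = s_j$ then $y = s_i^2 = 1$, contradicting $\ell(y) = 2$, so $s_i \neq s_j$. Hence no simple reflection is repeated in this reduced expression, and $y$ is standard parabolic Coxeter with a reduced expression of length two.

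In all three cases, $y$ is standard parabolic Coxeter with a reduced expression of length $k = \ell(y) \leq 2$, so Lemma~\ref{lem:boolean} gives that $[1,y]_\cB$ is isomorphic to the cubical lattice $\cC(1^k)$. Combined with the fact recalled immediately before the statement, namely that by Exercise~7(a) in Chapter~5 of \cite{BjoernerBrenti} we have $\Pxy = 1$ for all $x \leq y$ whenever $\ell(y) \leq 2$, this yields the converse to Theorem~\ref{thm:Cubical=>Trivial} in this range.

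There is no genuine obstacle here: the argument is a one-line reduction to Lemma~\ref{lem:boolean}. The only minor point requiring care is the length-zero case, where the relevant cubical lattice is the trivial one $\cC(0)$, and the observation that a length-two reduced word cannot repeat a generator (which is what forces the standard parabolic Coxeter property).
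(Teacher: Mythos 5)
Your argument is correct and is essentially the paper's own: the paper also observes that every element of length at most $2$ is standard parabolic Coxeter, invokes Lemma~\ref{lem:boolean} to get the cubulation by $\cC(1^k)$, and cites the same exercise of \cite{BjoernerBrenti} for triviality of the Kazhdan--Lusztig polynomials. Your extra care with the length-zero case (using the trivial cubical lattice $\cC(0)$) is a harmless refinement the paper leaves implicit.
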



\subsubsection{Dihedral groups}\label{sec:dihedral}

If $(W,S)$ is a dihedral group (finite or infinite) and $y \in W$, then $\Pxy = 1$ for all $x \leq y$; see \cite[Section~7.12(a)]{Humphreys}. We show in the next result that the graph $[1,y]_\cB$ can be cubulated. We restrict to the case $\ell(y) \geq 3$ since $\ell(y) \leq 2$ is treated by Corollary~\ref{cor:short} above.

\begin{prop}\label{prop:dihedral} 
Let $(W,S)$ be a Coxeter system of type $I_2(m)$ for $m \geq 3$ or type $\tilde{A}_1$, and let $y$ be an element of $W$ such that $\ell(y) \geq 3$.
Then $[1,y]_\cB$ is cubulated by $\cC(1,\ell(y)-1)$. In particular, the converse to Theorem \ref{thm:Cubical=>Trivial} holds for dihedral groups.
\end{prop}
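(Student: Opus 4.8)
The plan is to describe the Bruhat graph $[1,y]_\cB$ explicitly for dihedral $(W,S)$ and then exhibit an explicit isomorphism onto a spanning subgraph of $\cC(1,\ell(y)-1)$. First I would recall the combinatorial structure of a dihedral group: with $S = \{s,t\}$, every nonidentity element has a unique reduced expression which is an alternating word in $s$ and $t$, so $[1,y]$ (as a set) consists precisely of $1$ together with the alternating words of length $1,2,\dots,\ell(y)$ starting with $s$ and the alternating words of length $1,2,\dots,\ell(y)-1$ starting with $t$ (or symmetrically, depending on which letter $y$ starts with). Thus $\#[1,y] = 2\ell(y)$, with exactly one element of length $0$, exactly two elements of each length $1,\dots,\ell(y)-1$, and exactly one element of length $\ell(y)$. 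This already matches the rank-generating function of $\cC(1,\ell(y)-1)$, namely $(1+z)(1+z+\cdots+z^{\ell(y)-1})$, by Lemma~\ref{lem:cubicalQuantum} — a useful sanity check, and indeed forced, since by Proposition~\ref{prop:cubical-implies-quantum} the support of $y$ has size $2$.

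Next I would identify the reflections. In a dihedral group every reflection is obtained by conjugating $s$ or $t$, and one checks that $x < xt'$ for a reflection $t'$ exactly when passing from $x$ to $xt'$ either appends a letter (weak-order cover) or "wraps around" by flipping the shorter alternating word to the longer one on the other side — concretely, in type $I_2(m)$ the longest element $w_0$ has length $m$, and the extra non-Hasse edges in $[1,w_0]_\cB$ connect length-$k$ elements to length-$(m-k)$ elements of the opposite type. For our $y$ with $\ell(y)\ge 3$, the interval $[1,y]$ is an initial segment of this picture, and I would enumerate which pairs $x \le xt' \le y$ occur. The key point is then to label the vertices: send $1 \mapsto (0,0)$, the length-$k$ element starting with the same letter as $y$ to $(0,k)$ for $1 \le k \le \ell(y)$, and the length-$k$ element starting with the other letter to $(1,k-1)$ for $1 \le k \le \ell(y)-1$. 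Under this labeling the covering edges of the weak order $[1,y]_S$ — which already span $[1,y]_\cB$ by the remark at the end of Section~\ref{sec:directed} — map exactly to the edges $v \mapsto v+\vec e_1$ and $v \mapsto v+\vec e_2$ of $\cC(1,\ell(y)-1)$: the "$+\vec e_2$" edges are the two chains $1 \to s \to ts \to \cdots$ and $1 \to t \to st \to \cdots$ appropriately relabeled, and the single "$+\vec e_1$" edge at the bottom is the covering relation from the length-$1$ element of $y$'s type to the length-$2$ element of the opposite type (or vice versa), which is indeed a weak-order cover since one is obtained from the other by prepending... — here I would be careful to check this is genuinely a right-weak-order cover, i.e. that the shorter one times a simple reflection on the right gives the longer. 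Actually the cleaner choice is to use the covers $1 <_S s_i$ where $s_i \ne$ (first letter of $y$) to realize the $\vec e_1$-edge from $(0,0)$ to $(1,0)$, and then note $(1,k-1) \to (1,k)$ for $k \ge 1$ are the covers along the alternating word of the opposite type; I would pin down the orientation of the two generating chains so that all edges of the image are of the form $v \to v + \vec e_i$.

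The main obstacle, and the step deserving the most care, is verifying that the map is a graph \emph{isomorphism} onto its image and that this image spans $[1,y]_\cB$ — equivalently, that the chosen spanning subgraph (I will take the weak-order Hasse diagram $[1,y]_S$) is itself already isomorphic to $\cC(1,\ell(y)-1)$, so that no extra Bruhat edges need to be (and in fact can be) used. This requires confirming (i) that $[1,y]_S$ has exactly the $2\ell(y)$ vertices and exactly the $2\ell(y)-2$ covering edges predicted by $\cC(1,\ell(y)-1)$ (which has $N + \sum k_i = $ the right count), and (ii) that the two "branches" of $[1,y]_S$ emanating from the bottom reconnect correctly — in a dihedral group the weak order $[1,y]_S$ for $\ell(y) \ge 3$ is not a tree but has the "ladder/prism" shape matching $\cC(1, \ell(y)-1)$ precisely because of the one low-degree identification near the bottom. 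I would finish by invoking Corollary~\ref{cor:short} to dispose of $\ell(y) \le 2$ (already excluded by hypothesis) and citing \cite[Section 7.12(a)]{Humphreys} for triviality of $\Pxy$, so that together with Theorem~\ref{thm:Cubical=>Trivial} the converse holds for all $y$ in a dihedral group. I should double-check the $\ell(y) = 3$ base case by hand against the middle-right picture in Figure~\ref{fig:booleanDihedral}, where $[1,y]_\cB \cong \cC(1,2)$, to make sure the general labeling specializes correctly.
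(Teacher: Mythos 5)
The decisive step of your plan is false. In a dihedral group the element $y = sts\cdots$ of length $k = \ell(y) \geq 3$ has a unique reduced expression, so the simple-generator (weak-order) edges inside the Bruhat interval are very sparse: the subgraph of the Cayley graph $\cG$ induced on $[1,y]$ consists of exactly the two chains $1 \to s \to st \to \cdots \to y$ and $1 \to t \to ts \to \cdots$ (with $k$ and $k-1$ edges respectively), glued only at $1$, and additionally at the top only in the single case $y = w_0$ in $I_2(m)$. This graph is a tree (respectively, a $2m$-cycle), with $2k-1$ (respectively, $2k$) edges, whereas $\cC(1,k-1)$ has $3k-2$ edges, of which $k$ point in the $\vec{e}_1$ direction. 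So for $k \geq 3$ the weak-order picture is never isomorphic to $\cC(1,k-1)$: the two branches do not ``reconnect into a ladder,'' there is not merely a ``single $+\vec{e}_1$ edge at the bottom,'' and your explicit labeling moreover sends $y$ to $(0,\ell(y))$, which is not even a vertex of $\cC(1,\ell(y)-1)$. (Two further slips: $s \to ts$ is not a right weak cover, and the genuine right weak-order interval $\{ z \mid z \leq_S y\}$ is only the chain of prefixes of $y$, so it does not even contain $t$; the spanning statement you quote concerns vertex sets and does not make the Cayley subgraph a cubical lattice.)

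What is missing is that all but one of the $k$ rungs of the ladder must be realized by Bruhat edges coming from non-simple reflections. Concretely, label $(0,j)$ by the alternating word of length $j$ beginning with $t$ (so $(0,0) = 1$) and $(1,j)$ by the alternating word of length $j+1$ beginning with $s$, for $0 \leq j \leq k-1$. The $\vec{e}_2$ edges are Cayley edges along these two alternating chains; the bottom rung is $1 \to s$; and the rung at level $j \geq 1$ joins $tst\cdots$ (length $j$) to $sts\cdots$ (length $j+1$), which differ by right multiplication by the odd alternating word $tst$, $ststs$, and so on --- a reflection --- with length increasing by exactly one, hence an edge of $[1,y]_\cB$ that is \emph{not} a weak-order cover for $\ell(y)\geq 3$. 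This is exactly how the paper argues, by induction on $k$: the base is the square $[1,st]_\cB \cong \cC(1,1)$ from Lemma~\ref{lem:boolean}, whose edge $(t,st)$ is already a reflection edge, and each inductive step adjoins two Cayley edges together with one reflection edge such as $(ts,\, ts\cdot ststs) = (ts, sts)$, producing $\cC(1,k-1)$ as a spanning subgraph of $[1,y]_\cB$. Your vertex counts, Poincar\'e polynomial check, and the reduction of $\ell(y)\leq 2$ are fine, but without these reflection rungs the construction cannot be completed.
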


\begin{proof} Let $k = \ell(y)$. Since $(W,S)$ is dihedral, the generating set $S$ has exactly two elements. To simplify notation we put $S = \{ s,t\}$. Then, without loss of generality, $y$ has reduced expression the alternating word $sts \cdots$ containing $k$ letters. We induct on $k \geq 3$. 

Suppose first that $k = 3$; see the right of Figure~\ref{fig:booleanDihedral}. By Lemma~\ref{lem:boolean}, the graph $[1,st]_\cB$ is isomorphic to the square $\cC(1,1)$; the edges of this graph are the solid arrows on the right of Figure~\ref{fig:booleanDihedral}. Now the edge set of the graph $[1,sts]_\cB$ includes also the  edges $(t,ts)$, $(st,sts)$, and $(ts, ts(ststs)) = (ts, sts)$; these three edges are shown dashed on the right of Figure~\ref{fig:booleanDihedral}. Adding these three additional edges to the square $[1,st]_\cB$ results in a spanning subgraph of $[1,sts]_\cB$ which is isomorphic to $\cC(1,2)$. The proof of the inductive step is similar.
\end{proof}



\subsection{Computational results}\label{sec:computational}

Our exploration of whether the graph $[1,w_0]_\cB$ can be cubulated in the exceptional finite types $E$, $F$, and $H$  has largely been computational.
In particular, the first author has written a simple program (available on Zenodo~\cite{code} under the MIT License) to computationally check if such an interval can be cubulated.
We now provide a description of this program and its results. 

The operation of this program depends heavily on the Digraph and Coxeter libraries provided by SageMath~\cite{sage}.
Given the type of an irreducible finite Coxeter system $(W,S)$ as input, the program begins by using SageMath to generate the Bruhat poset $[1,w_0]$ together with its Hasse diagram.  The program then generates the rank sequence of the poset.  If the poset can be cubulated, then this rank sequence must exactly match the rank sequence for some cubical lattice.  Moreover, it is computable to find exactly what the dimensions of such a cubical lattice would
have to be, if it exists.  The program then generates a digraph (i.e.~a \verb!Digraph! object in SageMath) of such a cubical lattice and uses methods supplied by SageMath to determine whether this cubical lattice spans the Hasse diagram.
If there is such a cubulation, then it will use the isomorphism between the cubical lattice and some spanning subgraph of the Hasse diagram of $[1,w_0]$ to draw the Hasse diagram in such a way that the embedding is clear. 
In particular, \cref{fig:example-sublattice} was generated using a variant of this program.

At all steps of the above process, the program prints timestamped messages to a log file.
This log file also informs the user of the results of the program.
We note here that the program also supports affine Coxeter systems, although the user must provide an element $y\in W$, such that the program can then consider
the finite Bruhat interval $[1,y]$.
Variations of this program were crucial to the development of the proof provided in \cref{sec:A2tilde}.

Using this code, we have found that the graph $[1,w_0]_{\cB}$ cannot be cubulated in types $E_6$, $F_4$, and $H_4$. 
Investigation of other types has proven very computationally expensive using our present code; it did not yield results in type $D_6$ after $4$ months of compute time. Thus further computational investigations would require a different approach.


\subsection{An embedding result}\label{sec:embedding}

In this section, we show that if the converse to Theorem~\ref{thm:Cubical=>Trivial} fails for some Coxeter system $(W,S)$, then it fails for every Coxeter system which contains $(W,S)$ as a subsystem. We then restate and prove Theorem~\ref{thm:E7E8} from the introduction.

\begin{prop}\label{prop:embedding}
  Suppose $(W,S)$ is a Coxeter system so that, for some $y \in W$, we have $\Pxy = 1$ for all $x \leq y$, but $[1,y]_\cB$ cannot be cubulated. Let $(W',S')$ be any Coxeter system which contains $(W,S)$ as a subsystem. Denote by\/ $\leq'$ the Bruhat order on $(W',S')$, and let $\cB'$ be the Bruhat graph of $(W',S')$.

Write $P'_{x',y'}$ for the Kazhdan--Lusztig polynomial for $x',y' \in W'$. Then, regarding $y \in W$ as an element of $W'$, we have $P'_{x',y} = 1$ for all $x' \in W'$ with $x' \leq' y$, but $[1,y]_{\cB'}$ cannot be cubulated. In particular, the converse to Theorem~\ref{thm:Cubical=>Trivial} also fails for any Coxeter system which has $(W,S)$ as a subsystem.
\end{prop}

\begin{proof} This follows from the observation that for $x, y \in W$, both the Kazhdan--Lusztig polynomial $\Pxy$ and the graph $[1,y]_\cB$ are obtained using only elements of $W$ which have reduced expressions which are subwords of some reduced expression for $y$. But any reduced expression in $S'$ for $y$, regarding $y$ now as an element of $W'$, involves only letters in $S$.
\end{proof}

\MainTheoremB

\begin{proof}
Apply Proposition \ref{prop:embedding} to the fact from Section \ref{sec:computational} that $[1,w_0]_{\cB}$ cannot be cubulated in types $E_6$, $F_4$, or $H_4$.
\end{proof}


\section{Cubulations and growth in Coxeter groups}\label{sec:CubulationGrowth}

The goal of this section is to prove Theorem~\ref{thm:infinite} of the introduction. To prepare for this, in Section~\ref{sec:series} and Section~\ref{sec:PolynomialCriterion} we establish two technical results, concerning formal power series and polynomial growth, respectively.  The proof of Theorem~\ref{thm:infinite} is then carried out in Section~\ref{sec:growthCubulations}.

\subsection{Truncations of power series}\label{sec:series}

The proof of \cref{thm:infinite} will rely on the following technical lemma. It says that if the truncations of a power series are all polynomials of a very particular form, which arises in our argument, then the power series itself is either constant or a polynomial of a similar form.  

\begin{lemma}\label{lem:limit-of-quantum}
  Let $n \geq 1$, and let $(g_j(q))_{j=0}^\infty$ be a sequence of polynomials of the form
	\[
	g_j(q)
	=
	(1 - q^{a_{1,j}})
	(1 - q^{a_{2,j}})
	\cdots
	(1 - q^{a_{n,j}}),
	\]
	where for all $1 \leq i \leq n$ and all $j \geq 0$, the $a_{i,j}$ are integers satisfying \[ 1 \leq a_{1,j} \leq a_{2,j} \leq \cdots \leq a_{n,j}.\]
	Suppose that $F(q)$ is a power series such that for each $j \geq 0$, the Taylor polynomials $F[j](q)$ and $g_j[j](q)$ satisfy \[ F[j](q) = g_j[j](q).\] 
	Then either $F(q) = 1$, or there exists a positive integer $M$ and an integer $N$ with $1 \leq N \leq n$, such that 
	\[
	F(q) =
	(1 - q^{a_{1,M}})
	(1 - q^{a_{2,M}})
	\cdots
	(1 - q^{a_{N,M}})
	\]
	and for all $j \geq M$, we have $g_j(q) = g_M(q)$.
\end{lemma}

\begin{proof}
	We will prove the statement by induction on $n$. 
	Suppose $n = 1$. Then for each $j \geq 1$, we have $
	g_j(q) = 1 - q^{a_{1,j}}$.
	If $F(q) \neq 1$, then there must be some $m \in \N$ so that $g_m[m](q) \neq 1$. Then \[ g_m[m](q) = 1 - q^{a_{1,m}},\] and so in particular, $a_{1,m} \leq m$.
	Now for all $j \geq m$, we have $j \geq a_{1,m}$, so
	\[
	g_j[j](q) = 1 - q^{a_{1,m}}
	\]
	as well. Hence $g_j(q) = g_m(q)$ for every $j \geq m$. But then for every $j \geq m$, we also have
	\[
	F[j](q) = 1 - q^{a_{1,m}},
	\]
	and hence $F(q) = 1 - q^{a_{1,m}}$. Put $M = m$ and $N = n = 1$, and we have established the result for $n = 1$.
	
	For the inductive step, we again have that if $F(q) \neq 1$ then $g_m[m](q) \neq 1$ for some $m \in \N$. Now $a_{1,m} \leq \dots \leq a_{n,m}$, so if we expand out the product $g_m(q)$ we obtain
	\[
	g_m(q) = 1 - c_m q^{a_{1,m}} + \mbox{higher degree terms},
	\]
	where the coefficient $c_m \neq 0$ is the number of exponents $a_{1,m},\dots,a_{n,m}$ to be equal to $a_{1,m}$. We note that $a_{1,m} \leq m$. Hence for all $j \geq m$, since $j \geq a_{1,m}$ we have
	\[
	g_j[j](q) = 1 - c_m q^{a_{1,m}} + \mbox{higher degree terms}.
	\]
	Thus in particular, since $a_{1,j} \leq \dots \leq a_{n,j}$, we have $a_{1,j} = a_{1,m}$ for all $j \geq m$.
	So for all $j \geq m$ we have 
	\[
	\frac{g_j(q)}{1-q^{a_{1,m}}} =
	(1 - q^{a_{2,j}})
	\cdots
	(1 - q^{a_{n,j}}).
	\]
	We can thus define a sequence of polynomials $(h_j(q))_{j=0}^\infty$ by
	\[
	h_j(q) = \frac{g_{m+j}(q)}{1-q^{a_{1,m}}} =
	(1 - q^{a_{2,m+j}})
	\cdots
	(1 - q^{a_{n,m+j}}).
	\]
	
	Now as the quotient $1/(1-q^{a_{1,m}})$ 
	is itself a power series, we can view each $h_j(q)$ as the product of the polynomial $g_{m+j}(q)$ with this power series, and we can also define the product 
	\[
	G(q) = F(q) \left(
	\frac{1}{1-q^{a_{1,m}}} \right) = \frac{F(q)}{1-q^{a_{1,m}}}.
	\]
	Since $F[j](q) = g_j[j](q)$ for all $j \geq 0$, we can thus apply \cref{lem:power-series-product-agreement} to see that for all $j \geq 0$, 
	\begin{eqnarray*}
	G[m+j](q)  & = & \left( 
	\frac{F(q)}{1-q^{a_{1,m}}} 
	\right)
	[m+j](q) \\
	& = &
	\left( 
	\frac{g_{m+j}(q)}{1-q^{a_{1,m}}}
	\right)
	[m+j](q) \\ & = & h_j[m+j](q).
	\end{eqnarray*}
	Therefore $G[j](q) = h_j[j](q)$ for all $j \geq 0$.

	By inductive assumption, since each $h_j(q)$ is a product of $(n-1)$ factors, either $G(q) = 1$, or there is an $M' \geq 1$ and an integer $N'$ with $1 \leq N' -1 \leq n-1$, such that $G(q)$ is the product of $(N' - 1)$ factors as follows:
	\[
	G(q) 
	=
	(1 - q^{a_{2,M'}})
	\cdots
	(1 - q^{a_{N',M'}}),
	\]
	and for all $j \geq M'$, we have $h_j(q) = h_{M'}(q)$. Recall also from above that $a_{1,j} = a_{1,m}$ for each $j \geq m$. 
	
	If $G(q) = 1$, then $F(q) = 1 - q^{a_{1,m}}$ and $g_j(q) = g_m(q)$ for all $j \geq m$, and we are done with $M = m$ and $N = 1 \leq n$. Now assume $G(q) \neq 1$. Then we have in particular that $a_{i,m+j} = a_{i,m+M'}$ for all $2 \leq i \leq N'$ and all $j \geq M'$.  Put $M = m + M'$. Then $a_{1,m} = a_{1,M}$ and $a_{i,M'} = a_{i,M}$ for all $2 \leq i \leq n$, and so $F(q) = (1 - q^{a_{1,m}})G(q)$ is the product of $1 + (N'-1) = N'$ factors as follows:
	\[
	F(q)
	=
	(1-q^{a_{1,M}})
	(1 - q^{a_{2,M}})
	\cdots
	(1 - q^{a_{N',M}}),
	\]
	and for each $j \geq M$, we have $g_j(q) = g_M(q)$. Letting $N = N'$, this completes the proof of the inductive step.
\end{proof}

\subsection{A criterion for polynomial growth}\label{sec:PolynomialCriterion}

In this section we establish the following specialized statement, which we will need for our proof of Theorem~\ref{thm:infinite}. This says that if the volume growth series is a rational function of a very particular form, then the group $G$ has polynomial growth.

\begin{lemma}\label{lem:polyGrowth} Let $G$ be an infinite, finitely generated group with finite generating set $S$. Suppose that for some polynomial $f(q)$ with non-negative integer coefficients and some positive integer $m$, the volume growth series $\G_{G,S}(q)$ is given by the rational function
\[
\G_{G,S}(q) = \frac{f(q)}{(1-q)^m}.
\] Then $G$ has polynomial growth. 
\end{lemma}

\begin{proof} 
	We have 
	$\frac{1}{1-q} = \sum_{j=0}^\infty q^j$, and a straightforward induction shows that for all $m \geq 2$,
	\[
	\frac{1}{(1-q)^{m}}  = \frac{1}{(m-1)} \sum_{k=0}^\infty (k+m-1)(k+m - 2)\cdots(k+1)q^{k}.
	\]
	Put $b_{1,k} = 1$ for all $k \in \N$, and for all $m \geq 2$ and $k \in \N$, define
	\[
	b_{m,k} = \frac{(k+m-1)(k+m - 2)\cdots(k+1)}{(m-1)}. 
	\]
	Note that, since the numerator here contains $m-1$ consecutive positive integers, each $b_{m,k}$ is a positive integer. Also observe that for all $m \geq 1$ and all $k \in \N$, we have $b_{m,k} \leq b_{m,k+1}$.
	That is, for all $m \geq 1$, we have
	$
	\frac{1}{(1-q)^m} = \sum_{k=0}^\infty b_{m,k} q^k 
	$
	where the coefficients $b_{m,k}$ form a weakly increasing sequence of positive integers.
	
	Now write $f(q) = a_0 + a_1 z + \dots + a_M q^M$, where the $a_j$ are non-negative integers (and $a_M \neq 0$). Then the coefficient of $q^k$ in the volume growth series $\G_{G,S}(q) = f(q)/(1-q)^{m}$ is given by 
	\[ \beta_{G,S}(k)= \sum_{j=0}^k a_j b_{m,k-j}.\]
	Let $A$ be the positive integer $A = \max_{0 \leq j \leq M} a_j$. Then since the sequence $(b_{m,k})_{k=0}^\infty$ is weakly increasing, we have that for all $m \geq 1$ and all $k \in\N$,
	\[
	\beta_{G,S}(k) \leq A \sum_{j=0}^k b_{m,k-j} \leq A(k+1) b_{m,k}.
	\]
	If $m = 1$ then since $b_{1,k} = 1$ this implies \[ \beta_{G,S}(k) \leq A(k+1),\] while for $m \geq 2$ we get
	\[
	\beta_{G,S}(k) \leq \frac{A(k+1)(k+m-1)(k+m - 2)\cdots(k+1)}{(m-1)}. 
	\]
	Thus for all $m \geq 1$, we obtain that $\beta_{G,S}(k)$ is bounded above by a polynomial in $k$ (of degree~$m$). Therefore $\beta_{G,S}$ is quasi-dominated by a monomial (of degree $m$), and so $G$ has polynomial growth, as required.
\end{proof}

\subsection{Cubulations, growth, and type $\tilde{A}_n$}\label{sec:growthCubulations}

We now prove Theorem~\ref{thm:infinite}. For this, we will establish two key results, Propositions~\ref{prop:bruhat-volume} and~\ref{prop:growth-cubical}, and then combine Proposition~\ref{prop:growth-cubical} with statements from 
\cref{sec:preliminaries}.

Our results in this section concern the following class of Coxeter systems.

\begin{definition} Let $(W,S)$ be a Coxeter system with $S = \{ s_i \mid i \in [n] \}$. We say that $(W,S)$ is \emph{minimal nonspherical} if $W$ is an infinite group, but for each $i \in [n]$, the standard parabolic subgroup generated by $S \setminus \{ s_i \}$ is finite.
\end{definition}

Equivalently, $(W,S)$ is minimal nonspherical if every proper parabolic subgroup of $W$ is finite. Note that if $(W,S)$ is minimal nonspherical, then $|S|=n \geq 2$. 

\begin{rmk}\label{rmk:minimal} If $(W,S)$ is minimal nonspherical, then as discussed in Section~6.9 and Example~14.2.3 of \cite{Davis}, the generating set $S$ is the set of reflections in the faces of a compact simplex in either Euclidean or hyperbolic space. Hence $(W,S)$ is either irreducible affine, or irreducible hyperbolic as classified by Lann\'er; for the latter, see~\cite[Table~6.2]{Davis}.
\end{rmk}

We now establish Proposition~\ref{prop:bruhat-volume}. This says that in minimal nonspherical systems, the ball $B_{W,S}(k)$ is  contained in the Bruhat interval $[1,y]$ for all long enough $y$ (depending on $k$). 

\begin{prop}\label{prop:bruhat-volume}
	Let $(W,S)$ be a minimal nonspherical Coxeter system. Then there is an explicit constant $L = L(W,S) \geq 2$ such that for all $k \in \N$ and all $y \in W$ with $\ell(y) \geq k L$, we have
	$
	B_{W,S}(k) \subseteq [1,y].
	$
\end{prop}

\begin{proof}
	Since $(W,S)$ is minimal nonspherical, for each $i \in [n]$ the standard parabolic subgroup $W_{S \setminus \{ s_i \}}$ of $W$ is finite. Hence for each $i \in [n]$, we may define the positive integer $\ell_i$ to be the length of the longest element of $W_{S \setminus \{ s_i \}}$.
	We then define $L=L(W,S) \in \N$ by
	\[
	L = 1 + \max_{i \in [n]} \ell_i \geq 2.
	\]
	We note that if $y \in W$ is such that $\ell(y) \geq L$, then the support of $y$ must equal $S$. Otherwise,~$y$ would be contained in some proper standard parabolic subgroup of $W$, which would in turn imply $\ell(y) \leq L - 1$.
	
We now fix $k \in \N$, and let $y \in W$ be any element such that $\ell(y) \geq k L$. Then there is reduced expression for $y$ given by $y = y_1 \dots y_k,$
where for each $1 \leq j \leq k$, the subword $y_j$ is reduced, and $\ell(y_j) \geq L$. Hence each $y_j$ has support equal to $S$.
	
To complete the proof, let $x \in B_{W,S}(k)$. Then there is a reduced expression for $x$ given by
$x = s_{i_1} \dots s_{i_m},$ where $s_{i_j} \in S$ and $m \leq k$.  For all $1 \leq j \leq m$, since $y_j$ has support the entire set $S$, the letter $s_{i_j} \in S$ is a (proper) subword of the reduced expression $y_j$. Hence the reduced expression $s_{i_1} \dots s_{i_m}$ for $x$ is a subword of the reduced expression $y_1 \dots y_k$ for $y$.	Thus $x \leq y$ in Bruhat order, and so $B_{W,S}(k)$ is contained in the Bruhat interval $[1,y]$ as desired.
\end{proof}

We will use Proposition~\ref{prop:bruhat-volume} to prove our second key result, Proposition~\ref{prop:growth-cubical}, which describes the volume growth series for minimal nonspherical Coxeter systems $(W,S)$ in which $[1,y]_\cB$ can be cubulated for infinitely many distinct $y \in W$. The proof of Proposition~\ref{prop:growth-cubical} also makes essential use of Proposition~\ref{prop:cubical-implies-quantum}, which describes the Poincar\'e polynomial $p_y$ when $[1,y]_\cB$ can be cubulated, and the technical result established in Section~\ref{sec:series}.

\begin{prop}\label{prop:growth-cubical}
	Let $(W,S)$ be a minimal nonspherical Coxeter system with $|S| = n \geq 2$. 
	If there are infinitely many distinct $y \in W$ such that the Bruhat graph $[1,y]_\cB$ can be cubulated, then either 
	\[
	\G_{W,S}(q) = \frac{1}{(1-q)^{n+1}}
	\]
	or there is an integer $N$ with $1 \leq N \leq n$, and  integers $2 \leq a_1 \leq a_2 \leq \dots \leq a_N$, such that 
	\[
	\Gamma_{W,S}(q)= 
	\left(
	\frac{1 - q^{a_1}}{1-q}
	\right)
	\left(
	\frac{1 - q^{a_2}}{1-q}
	\right)
	\dots
	\left(
	\frac{1 - q^{a_N}}{1-q}
	\right)
	\frac{1}{(1-q)^{n+1-N}}.
	\]
\end{prop}

\begin{proof}
	Let $L = L(W,S) \geq 2$ be the constant from \cref{prop:bruhat-volume}. Set $y_0 = 1$.
	Then choose an infinite sequence $(y_j)_{j=0}^\infty$ of elements of $W$ such that:
	\begin{itemize}
		\item for each $j \geq 0$, the graph $[1,y_j]_\cB$ can be cubulated; and
		\item for each $j \geq 1$, we have $\ell(y_j) \geq L j$.
	\end{itemize}
Thus $B_{W,S}(0) = \{ 1\}$ and $[1,y_0] = \{1\}$, and by \cref{prop:bruhat-volume}, we have that for all $j \geq 1$, the ball $B_{W,S}(j)$ is contained in $[1,y_j]$. Hence, in particular, for all $j \geq 1$ the element~$y_j$ has support equal to $S$.
	
	To simplify notation we now write $p_j(q)$ for the Poincar\'e polynomial $p_{y_j}(q)$.
We note that each $p_j(q)$ has degree equal to $\ell(y_j)$, hence $\deg p_j(q) \geq L j \geq j+1$ for all $j \geq 1$. 
 As $B_{W,S}(j)$ is a subset of $[1,y_j]$, we thus obtain that for all $j \geq 0$, \[ W[j](q) = p_j[j](q), \]
	where we recall 
  that $W(q) = (1-q)\G_S(q)$ is the Poincar\'e series for $W$. 
	
	By assumption, for each $j \geq 1$, the graph $[1,y_j]_\cB$ can be cubulated by, say, the (nontrivial) cubical lattice $\cC(k_{1,j}, k_{2,j}, \dots, k_{N_j,j})$. We may assume $1 \leq k_{1,j} \leq k_{2,j} \leq \cdots \leq k_{N_j,j}$.
	Then by \cref{prop:cubical-implies-quantum}, for all $j \geq 1$ the Poincar\'e polynomial $p_j(q)$ is the product of 
  $q$-analogs
	\[
	p_j(q) = 
  \prod_{i=1}^{N_j} [k_{1,j} + 1]_q
	\]
	where, since each $k_{i,j} \geq 1$, in fact each $N_j = n = |S|$ is the cardinality of the support of $y_j$.  We hence simplify notation by putting $a_{i,j} = k_{i,j} + 1$ for all $1 \leq i \leq n$ and $j \geq 1$. Then for all $j \geq 1$, we have $2 \leq a_{1,j} \leq a_{2,j} \leq \dots \leq a_{n,j}$ and 
\[
	p_j(q) =  \prod_{i=1}^n \left( \frac{q^{a_{i,j}} - 1}{q-1} \right) = \prod_{i=1}^n \left( \frac{1 - q^{a_{i,j}} }{1-q} \right).
	\]
	
	We now define a sequence of polynomials $(g_j(q))_{j=0}^\infty$ by
	\[
		g_j(q) = (1-q)^{n} p_j(q),	
	\]
	and define a power series $F(q)$ by 
	\[
		F(q) = (1-q)^{n}W(q) = (1-q)^{n+1}\G_{W,S}(q).
	\]
	By \cref{lem:power-series-product-agreement}, since $W[j](q) = p_j[j](q)$, we have $F[j](q) = g_j[j](q)$ for each $j \geq 0$. 
	
	If $F(q) = 1$ then it is immediate that $\displaystyle \G_{W,S}(q) = \frac{1}{(1-q)^{n+1}}$. Otherwise, by \cref{lem:limit-of-quantum},
	\[
	F(q)
	=
	(1 - q^{a_{1,M}})
	(1 - q^{a_{2,M}})
	\cdots
	(1 - q^{a_{N,M}})
	\]
	for some $M \geq 1$ and some integer $N$ with $1 \leq N \leq n$.
	Upon dividing this polynomial expression for $F(q)$ through by $(1-q)^{n+1}$, and putting $a_i = a_{i,M} \geq 2$ for $1 \leq i \leq N$, we obtain the desired expression for the volume growth series $\G_{W,S}(q)$.	
\end{proof}

\begin{corollary}\label{cor:cubicalPoly}	Let $(W,S)$ be a minimal nonspherical Coxeter system. 
	If there are infinitely many distinct $y \in W$ such that $[1,y]_\cB$ can be cubulated, then $W$ has polynomial growth.
\end{corollary}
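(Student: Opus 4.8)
The plan is to read off the exact rational form of the volume growth series from Proposition~\ref{prop:growth-cubical}, rewrite it in the normalized shape $f(z)/(1-z)^m$ with $f$ having non-negative integer coefficients and $m$ a positive integer, and then quote Lemma~\ref{lem:polyGrowth}. Since $(W,S)$ is minimal nonspherical, $W$ is infinite, and since $S$ is finite, $W$ is an infinite finitely generated group; write $n = |S| \geq 2$. The hypothesis that there are infinitely many distinct $y \in W$ with $[1,y]_\cB$ cubulable is exactly what is needed to invoke Proposition~\ref{prop:growth-cubical}, which leaves us with two cases.

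In the first case, $\Gamma_{W,S}(z) = \dfrac{1}{(1-z)^{n+1}}$, which is already of the form $f(z)/(1-z)^m$ with $f(z) = 1$ and $m = n+1 \geq 1$. In the second case, there is an integer $N$ with $1 \leq N \leq n$ and integers $2 \leq a_1 \leq \cdots \leq a_N$ with
\[
\Gamma_{W,S}(z) = \left(\frac{1 - z^{a_1}}{1-z}\right)\cdots\left(\frac{1 - z^{a_N}}{1-z}\right)\frac{1}{(1-z)^{n+1-N}}.
\]
Here each factor $\dfrac{1 - z^{a_i}}{1-z} = 1 + z + \cdots + z^{a_i - 1}$ is a polynomial with non-negative integer coefficients, so their product $f(z) \define \prod_{i=1}^{N}(1 + z + \cdots + z^{a_i-1})$ is again a polynomial with non-negative integer coefficients; and because $N \leq n$, the exponent $m \define n+1-N$ is a positive integer. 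Thus in both cases $\Gamma_{W,S}(z) = f(z)/(1-z)^m$ with $f$ of the required type, and Lemma~\ref{lem:polyGrowth} gives that $W$ has polynomial growth.

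The only thing to be careful about is the normalization: one must move the cyclotomic-type factors $1 - z^{a_i}$ into the numerator in the guise of quantum polynomials $1 + z + \cdots + z^{a_i - 1}$ (rather than leaving them as $\prod_i (1 - z^{a_i})/(1-z)^{n+1}$, whose numerator has negative coefficients and so does not meet the hypotheses of Lemma~\ref{lem:polyGrowth}), and to note that dividing out these $N$ copies of $(1-z)$ leaves $n+1-N \geq 1$ copies in the denominator. There is no genuine obstacle beyond assembling Proposition~\ref{prop:growth-cubical} and Lemma~\ref{lem:polyGrowth}; the content of the corollary is entirely carried by those two results.
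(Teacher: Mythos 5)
Your proposal is correct and follows essentially the same route as the paper: the paper's proof is exactly to apply Lemma~\ref{lem:polyGrowth} to the form of $\Gamma_{W,S}(z)$ from Proposition~\ref{prop:growth-cubical}, taking $f(z)$ to be the product of quantum polynomials and $m = n+1-N \geq 1$. Your explicit handling of the case $\Gamma_{W,S}(z) = 1/(1-z)^{n+1}$ and of the normalization of the numerator is a careful spelling-out of the same argument.
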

\begin{proof} Apply Lemma~\ref{lem:polyGrowth} to the form of the volume growth series $\G_{W,S}(q)$ established in  Proposition~\ref{prop:growth-cubical}, with $f(q)$ the product of 
  $q$-analogs
$
f(q) = 
\prod_{i=1}^N [a_i]_q
$, 
and $m = n+1 - N \geq 1$.
\end{proof}

We now complete the proof of Theorem~\ref{thm:infinite} by combinining Proposition~\ref{prop:growth-cubical} and Corollary \ref{cor:cubicalPoly} with results from 
\cref{sec:preliminaries}.

\MainTheoremC

\begin{proof}
Let $(W,S)$ be a minimal nonspherical Coxeter system, and suppose that there are infinitely many distinct $y \in W$ such that $[1,y]_\cB$ can be cubulated.  By Theorem~\ref{thm:polyAffine} and Corollary~\ref{cor:cubicalPoly}, since $W$ has polynomial growth,  the Coxeter system $(W,S)$ is irreducible affine. 
If $|S| = n +1 \geq 2$, then from Proposition~\ref{prop:growth-cubical} with $n$ replaced by $n+1$ in its statement, we see that all poles of the volume growth series $\G_{W,S}(q)$ are at $q = 1$. Hence, by Corollary~\ref{cor:affineGrowth}, we have that $(W,S)$ is of type $\tilde{A}_n$ for some $n \geq 1$. This completes the proof of Theorem~\ref{thm:infinite}.
\end{proof}

\begin{rmk}\label{rmk:hyp} We now sketch an alternative approach to part of the proof of Theorem~\ref{thm:infinite}. We will freely use some standard concepts and arguments from geometric group theory (see~\cite{Loeh}, as well as the reference~\cite{BridsonHaefliger}). Suppose $(W,S)$ is minimal nonspherical. Then by Remark~\ref{rmk:minimal}, either $(W,S)$ is irreducible affine, or the group $W$ acts properly discontinuously and cocompactly by isometries on $n$-dimensional hyperbolic space $\mathbb{H}^{n}$ (where $|S| = n+1$). In the latter case, $W$ is quasi-isometric to $\mathbb{H}^n$, hence is a Gromov-hyperbolic group, and therefore has exponential growth. Thus for $(W,S)$ minimal nonspherical, once it is known that~$W$ has polynomial growth (or even just subexponential growth), we can deduce that $(W,S)$ is irreducible affine without using Theorem~\ref{thm:polyAffine}.
\end{rmk}


\section{Construction of cubulations in type \texorpdfstring{$\tilde{A}_2$}{\~A2}}\label{sec:A2tilde}

We conclude by giving a constructive proof of Theorem~\ref{thm:A2tilde}, which concerns  $(W,S)$ of type $\tilde{A}_2$. In Section~\ref{sec:reduction}, we show that it suffices to consider one infinite family $\{ y_m \}_{m \in \N}$ of elements of~$W$, together with certain standard parabolic Coxeter elements, which are already addressed by Lemma \ref{lem:boolean}. We construct a cubulation of each graph $[1,y_m]_\cB$ in Section~\ref{sec:infiniteA2tilde}.

\subsection{Reduction}\label{sec:reduction}

In this section we carry out a reduction for the proof of Theorem~\ref{thm:A2tilde}, using results from Libedinsky--Patimo~\cite{LibedinskyPatimo} and Burrull--Libedinsky--Plaza~\cite{BurrullLibedinskyPlaza}. 

In order to obtain this reduction, we first record another condition equivalent to $\Pxy = 1$ for all $x \leq y$, complementing Theorem \ref{thm:palindromic-iff-trivial}. Following the notation of~\cite{LibedinskyPatimo}, and using the same normalizations as in that work, the Hecke algebra of $W$ is a $\Z[v,v^{-1}]$ module with two distinguished bases: the standard basis $\{ \Hgen_y\}_{y \in W}$, and the canonical (or Kazhdan--Lusztig) basis $\{ \KLbasis_y \}_{y \in W}$. These bases are related via the equation
\begin{equation}\label{eq:changeBasis}
\KLbasis_y = \sum_{x \leq y} h_{x,y} \Hgen_y
\end{equation}
where the $h_{x,y} = h_{x,y}(v)$ are the Kazhdan--Lusztig polynomials in Soergel's normalization~\cite{Soergel97}. As in~\cite{LibedinskyPatimo}, for $y \in W$ put 
\[
\Ngen_y = 
	\sum_{x\leq y} v^{\ell(y)-\ell(x)} \Hgen_y.
\]

We now relate these notions to triviality of the Kazhdan--Lusztig polynomials $\Pxy = \Pxy(q)$ as in Section~\ref{sec:KL}.

\begin{lemma}\label{lem:KLgenNgen} 
Let $(W,S)$ be any Coxeter system, and let $y \in W$. The following are equivalent:
\begin{enumerate}
\item $\KLbasis_y = \Ngen_y$;
\item for all $x \leq y$, we have $h_{x,y} = v^{\ell(y) - \ell(x)}$; and
\item for all $x \leq y$, we have $\Pxy = 1$.
\end{enumerate}
\end{lemma}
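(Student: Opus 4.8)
The plan is to prove the three-way equivalence by connecting the Soergel normalization used in \cite{LibedinskyPatimo} with the recursive normalization from Section~\ref{sec:KL}, and then leveraging the fact that $\Ngen_y$ has coefficients $v^{\ell(y)-\ell(x)}$ by definition. First I would recall the standard dictionary between the two normalizations: the Soergel-normalized polynomial $h_{x,y}(v)$ is related to the classical $\Pxy(q)$ by $h_{x,y}(v) = v^{\ell(y)-\ell(x)} \Pxy(v^{-2})$, with the substitution $q = v^{-2}$ (this is the relation between \cite{Soergel97} and \cite{KL1}; one can also cite \cite[Sec.~5.1]{BjoernerBrenti} for the correspondence of conventions). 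Since $\Pxy(q)$ has constant term $1$ for all $x \leq y$ and degree at most $\tfrac12(\ell(y)-\ell(x)-1)$ by Theorem~\ref{thm:Pdefn}, the polynomial $h_{x,y}(v)$ is always of the form $v^{\ell(y)-\ell(x)} + (\text{lower-degree terms in } v)$ — in particular its top-degree term is exactly $v^{\ell(y)-\ell(x)}$.

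The equivalence $(2) \Leftrightarrow (3)$ is then immediate from this dictionary: $h_{x,y} = v^{\ell(y)-\ell(x)}$ precisely when $v^{\ell(y)-\ell(x)} \Pxy(v^{-2}) = v^{\ell(y)-\ell(x)}$, i.e.\ $\Pxy(v^{-2}) = 1$, i.e.\ $\Pxy(q) = 1$ (a polynomial identity holding at all $q$ of the form $v^{-2}$ holds identically). For $(1) \Leftrightarrow (2)$, I would compare the change-of-basis formula \eqref{eq:changeBasis}, $\KLbasis_y = \sum_{x \leq y} h_{x,y} \Hgen_x$ (note: the excerpt's display writes $\Hgen_y$, which must be a typo for $\Hgen_x$), with the definition $\Ngen_y = \sum_{x \leq y} v^{\ell(y)-\ell(x)} \Hgen_x$. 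Since $\{\Hgen_x\}_{x \in W}$ is a basis of the Hecke algebra over $\Z[v,v^{-1}]$, two such sums are equal if and only if they have the same coefficient on each $\Hgen_x$. Both sums range over exactly the set $\{x : x \leq y\}$ — for the left side this uses $h_{x,y} \neq 0 \iff x \leq y$, which follows from $\Pxy = 0$ for $x \not\leq y$ and $\Pxy(q)$ having constant term $1$ for $x \leq y$. Hence $\KLbasis_y = \Ngen_y \iff h_{x,y} = v^{\ell(y)-\ell(x)}$ for every $x \leq y$.

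The main obstacle — really the only subtle point — is pinning down the precise normalization dictionary and making sure the degree bound from Theorem~\ref{thm:Pdefn}(3) genuinely forces the top coefficient of $h_{x,y}$ to be the coefficient of $q^0$ in $\Pxy$, namely $1$; this is what makes ``$h_{x,y}$ is a single monomial $v^{\ell(y)-\ell(x)}$'' equivalent to ``$\Pxy$ is the constant $1$'' rather than merely to ``$\Pxy$ has the right leading behavior.'' Concretely, writing $\Pxy(q) = \sum_{i} p_i q^i$ with $p_0 = 1$ and $p_i = 0$ for $2i > \ell(y)-\ell(x)-1$, we get $h_{x,y}(v) = \sum_i p_i v^{\ell(y)-\ell(x)-2i}$, a sum of monomials of distinct degrees; this equals $v^{\ell(y)-\ell(x)}$ iff $p_i = 0$ for all $i \geq 1$ iff $\Pxy = 1$. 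Once this is spelled out, the lemma follows. I would keep the write-up to one compact paragraph citing the normalization correspondence, then a short paragraph each for $(1)\Leftrightarrow(2)$ and $(2)\Leftrightarrow(3)$.
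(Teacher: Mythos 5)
Your proposal is correct and follows essentially the same route as the paper: $(1)\Leftrightarrow(2)$ by comparing coefficients on the standard basis via \eqref{eq:changeBasis} and the definition of $\Ngen_y$, and $(2)\Leftrightarrow(3)$ via the normalization dictionary $q=v^{-2}$, $h_{x,y}=v^{\ell(y)-\ell(x)}\Pxy(v^{-2})$. Your extra care with the degree bound and constant term is harmless but not needed, since the identity $h_{x,y}=v^{\ell(y)-\ell(x)}$ is equivalent to $\Pxy(v^{-2})=1$, hence to $\Pxy=1$, directly.
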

\begin{proof} 
The equivalence of (1) and (2) is due to $\{ \Hgen_y \}_{y \in W}$ being a basis for the Hecke algebra, Equation~\eqref{eq:changeBasis}, and the definition of $\Ngen_y$. The equivalence of (2) and (3) is then obtained by changing the normalization of Kazhdan--Lusztig polynomials. Specifically, making the identification $q = v^{-2}$ we get that $\Pxy = v^{\ell(x) - \ell(y)}h_{x,y}$, and hence $\Pxy = 1$ exactly when $h_{x,y} = v^{\ell(y) - \ell(x)}$.
\end{proof}

For $y,y' \in W$, write $y \sim y'$ if there is a diagram automorphism $\phi$ of $(W,S)$ such that $\phi(y) = y'$. The next statement, which applies Theorems~\ref{thm:EW} and~\ref{thm:palindromic-iff-trivial}, is similar to Proposition 2.14 of~\cite{BurrullLibedinskyPlaza}.

\begin{corollary}\label{cor:diagram} 
Let $(W,S)$ be any Coxeter system, and let $y \in W$.The following are equivalent:
\begin{enumerate}
\item $\KLbasis_y = \Ngen_y$; 
\item for all $x \leq y$, we have $\Pxy = 1$; and
\item for all $y' \in W$ such that $y \sim y'$, and for all $x \leq y'$, we have $P_{x,y'} = 1$.
\end{enumerate}
\end{corollary}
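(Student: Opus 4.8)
The plan is to reduce the whole statement to the combinatorial criterion of Corollary~\ref{cor:palindromic} together with the already-proven Lemma~\ref{lem:KLgenNgen}. The equivalence of (1) and (2) is exactly items (1) and (3) of Lemma~\ref{lem:KLgenNgen}, so nothing new is needed there. The implication (3)$\Rightarrow$(2) is immediate, since the identity is a diagram automorphism and hence $y \sim y$; so the only substantive point is (2)$\Rightarrow$(3).

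For (2)$\Rightarrow$(3), I would first record that a diagram automorphism $\phi$ of $(W,S)$ — a permutation of $S$ preserving the Coxeter matrix, extended to a group automorphism of $W$ — sends reduced expressions to reduced expressions of the same length, and therefore preserves the length function $\ell$; it satisfies $\phi(xsx^{-1}) = \phi(x)\phi(s)\phi(x)^{-1}$, so it permutes the reflection set $\cT$; and via the subword characterization of Bruhat order (Theorem 2.2.2 of~\cite{BjoernerBrenti}) it preserves $\leq$. Consequently $\phi$ restricts to a length-preserving poset isomorphism $[1,y] \xrightarrow{\ \sim\ } [1,\phi(y)]$, so $c_j(y) = c_j(\phi(y))$ for every $j$, and hence the Poincar\'e polynomials agree: $p_{\phi(y)}(z) = p_y(z)$.

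Now suppose (2) holds, i.e.\ $\Pxy = 1$ for all $x \leq y$. By Corollary~\ref{cor:palindromic}, the polynomial $p_y(z)$ is palindromic. For any $y' \in W$ with $y \sim y'$, say $y' = \phi(y)$, the previous paragraph gives $p_{y'}(z) = p_y(z)$, which is therefore also palindromic; a second application of Corollary~\ref{cor:palindromic} yields $P_{x,y'} = 1$ for all $x \leq y'$, which is (3). Chaining (1)$\Leftrightarrow$(2) from Lemma~\ref{lem:KLgenNgen} with (2)$\Rightarrow$(3) and the trivial (3)$\Rightarrow$(2) completes the proof.

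I do not anticipate a genuine obstacle here; the only point requiring any care is the (standard) verification that a diagram automorphism preserves length and Bruhat order, and hence induces the poset isomorphism of intervals. An alternative route, closer in spirit to Proposition 2.14 of~\cite{BurrullLibedinskyPlaza}, would be to observe that $\phi$ induces an algebra automorphism of the Hecke algebra sending $\Hgen_y \mapsto \Hgen_{\phi(y)}$, $\KLbasis_y \mapsto \KLbasis_{\phi(y)}$, and $\Ngen_y \mapsto \Ngen_{\phi(y)}$, so that condition (1) transfers directly along $\sim$; this bypasses Corollary~\ref{cor:palindromic} entirely. Either argument is short.
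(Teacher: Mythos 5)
Your proposal is correct and follows essentially the same route as the paper: the paper likewise observes that diagram automorphisms preserve length and the reflection set, so that $y \sim y'$ gives a poset isomorphism $[1,y] \cong [1,y']$ and hence $p_y(z) = p_{y'}(z)$, then applies both directions of Corollary~\ref{cor:palindromic} and finishes with Lemma~\ref{lem:KLgenNgen}. Your extra detail on why the automorphism preserves Bruhat order, and the alternative Hecke-algebra route you sketch, are fine but not substantively different from the paper's argument.
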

\begin{proof} Observe that both word length and the set of reflections in $W$ are invariant under diagram automorphisms. Hence if $y \sim y'$, the posets $[1,y]$ and $[1,y']$ are isomorphic, and so in particular, the Poincar\'e polynomials $p_y$ and $p_{y'}$ are identical. Therefore utilizing both implications in Corollary~\ref{cor:palindromic} (which follows from Theorems~\ref{thm:EW} and~\ref{thm:palindromic-iff-trivial}), we have $\Pxy = 1$ for all $x \leq y$ if and only if $P_{x,y'} = 1$ for all $x \leq y'$. The result then follows from Lemma~\ref{lem:KLgenNgen}.
\end{proof}

In~\cite[Section 2.1]{BurrullLibedinskyPlaza}, it is established that every element of $W$ is $\sim$-equivalent to an element in the disjoint union of four infinite families, denoted $X = \{ x_n \}_{n \in \N}$, $\Theta = \{ \theta(m,n) \}_{m,n \in \N}$, $\Theta_1 = \{ \theta(m,n)s_{m,n} \}_{m,n \in \N}$, and $\Theta_2 = \{ s_0 \theta(m,n) s_{m,n} \}_{m,n \in \N}$. We refer the reader to~\cite{BurrullLibedinskyPlaza} for the full definitions of these families, since we will only be interested in certain instances, which we define after the next statement.

\begin{thm}\label{thm:LP} 
Let $(W,S)$ be of type $\tilde{A}_2$.
\begin{enumerate}
\item $\KLbasis_{x_n} = \Ngen_{x_n}$ if and only if $n \leq 3$.
\item $\KLbasis_{\theta(m,n)} = \Ngen_{\theta(m,n)}$ if and only if at least one of $m$ and $n$ is equal to $0$.
\item If $y \in \Theta_1 \sqcup \Theta_2$, then $\KLbasis_{y} \neq \Ngen_{y}$.
\end{enumerate}
\end{thm}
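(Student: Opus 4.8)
The plan is to reduce all three parts of Theorem~\ref{thm:LP} to a statement about triviality of Kazhdan--Lusztig polynomials via Lemma~\ref{lem:KLgenNgen} — which says $\KLbasis_y = \Ngen_y$ if and only if $\Pxy = 1$ for all $x \leq y$ — and then to read the answer off the explicit computations of Libedinsky--Patimo~\cite{LibedinskyPatimo} and the combinatorial framework of Burrull--Libedinsky--Plaza~\cite{BurrullLibedinskyPlaza}. Accordingly, for the three negative assertions (the case $n \geq 4$ in~(1), the case $m, n \geq 1$ in~(2), and all of~(3)) it suffices to exhibit, for the $y$ in question, a single $x \leq y$ with $h_{x,y} \neq v^{\ell(y)-\ell(x)}$; equivalently, by Corollary~\ref{cor:palindromic}, to show that $p_y(z)$ is not palindromic. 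For the positive assertions (the case $n \leq 3$ in~(1) and the case $\min(m,n) = 0$ in~(2)) one must instead show that \emph{all} the relevant $\Pxy$ are trivial, which we do by writing $p_y(z)$ as a product of quantum polynomials and applying Corollaries~\ref{cor:quantumPal} and~\ref{cor:palindromic}; for the finitely many small cases ($n \leq 3$) this can alternatively be verified directly using the recursion of Theorem~\ref{thm:Pdefn}.

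Carrying this out part by part: for~(1), the elements $x_n$ form a chain of reduced words whose length grows linearly in $n$, and the Kazhdan--Lusztig basis element $\KLbasis_{x_n}$ — equivalently the full list of coefficients $h_{x,x_n}$ — is given in closed form in~\cite{LibedinskyPatimo}. For $n \leq 3$ this closed form coincides with $\Ngen_{x_n}$, while for every $n \geq 4$ it acquires a genuine correction term, producing an $x \leq x_n$ with $h_{x,x_n} \neq v^{\ell(x_n)-\ell(x)}$; one then invokes Lemma~\ref{lem:KLgenNgen}. For~(2), when $\min(m,n) = 0$ the element $\theta(m,n)$ lies in a region of $W$ where $[1,\theta(m,n)]$ is, as a poset, a product of subintervals of $\Z$ — indeed such intervals are cubulated explicitly in Section~\ref{sec:infiniteA2tilde} — so by Proposition~\ref{prop:cubical-implies-quantum} the polynomial $p_{\theta(m,n)}(z)$ is a product of quantum polynomials and hence palindromic, giving $\KLbasis_{\theta(m,n)} = \Ngen_{\theta(m,n)}$; when $m, n \geq 1$ the Libedinsky--Patimo character formula for $\KLbasis_{\theta(m,n)}$ carries a nonzero correction term, so $\KLbasis_{\theta(m,n)} \neq \Ngen_{\theta(m,n)}$. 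For~(3), each $y \in \Theta_1 \sqcup \Theta_2$ is obtained from some $\theta(m,n)$ by appending the fixed reflection-word $s_{m,n}$ on the right (and, for $\Theta_2$, also prepending $s_0$), and in every case the resulting interval $[1,y]$ has non-palindromic Poincar\'e polynomial; this is the phenomenon recorded in Proposition~2.14 of~\cite{BurrullLibedinskyPlaza}, whose argument (or a short variant of it, again routed through Corollary~\ref{cor:palindromic}) we import.

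The main obstacle is the negative direction over the infinite families: one must pin down, for each $\theta(m,n)$ with $m, n \geq 1$ and for each $y \in \Theta_1 \sqcup \Theta_2$, the precise element $x$ (depending on $m$ and $n$) witnessing $\Pxy \neq 1$, and verify that the Libedinsky--Patimo formulas genuinely produce a nonzero coefficient there rather than a cancellation. The positive direction, by contrast, should be comparatively routine: it reduces to recognizing the relevant $p_y(z)$ as a product of quantum polynomials — precisely the shape of Poincar\'e polynomial produced by the cubulations constructed in the following subsection — after which Corollaries~\ref{cor:quantumPal} and~\ref{cor:palindromic} finish the argument.
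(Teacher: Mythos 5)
Your proposal is correct in substance, but it routes parts of the argument differently from the paper, and at the cost of some unnecessary machinery. The paper's proof is a direct coefficient comparison: $\Ngen_y$ is by definition the element whose coefficient on $\Hgen_x$ is $v^{\ell(y)-\ell(x)}$ for all $x \leq y$, so one simply reads off from the closed formulas for $\KLbasis_{x_n}$ and $\KLbasis_{\theta(m,n)}$ in Theorem~1(i),(ii) of~\cite{LibedinskyPatimo} exactly when equality holds (parts (1) and (2)), and from the formulas in Propositions~3.1 and~3.3 of~\cite{BurrullLibedinskyPlaza} that equality always fails for $y \in \Theta_1 \sqcup \Theta_2$ (part (3)). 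In particular, the ``main obstacle'' you flag --- pinning down a witness $x$ with $\Pxy \neq 1$ and checking palindromicity of $p_y$ via Corollary~\ref{cor:palindromic} --- is a detour: once the closed formula for $\KLbasis_y$ is in hand, any coefficient differing from $v^{\ell(y)-\ell(x)}$ is visible immediately, and Lemma~\ref{lem:KLgenNgen} is not even needed for the negative direction. Your route for the positive half of (2) is genuinely different: you deduce $\KLbasis_{\theta(m,0)} = \Ngen_{\theta(m,0)}$ from the cubulation of $[1,y_m]_\cB$ in Proposition~\ref{prop:infinite} together with Theorem~\ref{thm:Cubical=>Trivial} and Lemma~\ref{lem:KLgenNgen}. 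This is legitimate and not circular, since Proposition~\ref{prop:infinite} and Lemma~\ref{lem:label} rely only on the description of the intervals $[1,\theta(m,0)]$ in~\cite{LibedinskyPatimo} and not on Theorem~\ref{thm:LP}; it buys a proof of that case that does not invoke the Libedinsky--Patimo character formula, whereas the paper's citation is shorter (and you still need a diagram-automorphism step, as in Corollary~\ref{cor:diagram}, to transfer from $\theta(m,0)$ to $\theta(0,n)$).

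Two slips to correct. First, your citation for part (3) is off: Proposition~2.14 of~\cite{BurrullLibedinskyPlaza} is the diagram-automorphism statement analogous to Corollary~\ref{cor:diagram}; the assertions about $\Theta_1$ and $\Theta_2$ come from the explicit formulas in Propositions~3.1 and~3.3 of that paper. Second, your claim that for $\min(m,n)=0$ the interval $[1,\theta(m,n)]$ is ``as a poset, a product of subintervals of $\Z$'' is false as stated --- already $[1,s_1s_2s_1]$ is not isomorphic to a product of chains --- and what is actually true (and all you need, via Proposition~\ref{prop:cubical-implies-quantum}) is the weaker statement that the Bruhat graph $[1,y_m]_\cB$ admits a spanning cubical lattice.
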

\begin{proof} Parts (1) and (2) are immediate from the formulas for $\KLbasis_y$ given in parts (i) and (ii) of Theorem 1 of~\cite{LibedinskyPatimo}, respectively. The last part is then immediate from the more explicit formulas for $\KLbasis_y$ given for $y \in \Theta_1$ (respectively, $y \in \Theta_2$)  by  Proposition~3.1 (respectively, Proposition~3.3) of~\cite{BurrullLibedinskyPlaza}.\end{proof}

We now define certain of the elements of $W$ which appear in Theorem~\ref{thm:LP}.  Put $S = \{ s_0, s_1, s_2 \}$, where $\{s_1,s_2\}$ is the set of generators for the corresponding spherical Coxeter system of type~$A_2$. This notation is consistent with that of~\cite{BurrullLibedinskyPlaza}; whereas in~\cite{LibedinskyPatimo}, the generating set is instead denoted $\{ s_1, s_2, s_3\}$. In both~\cite{LibedinskyPatimo} and~\cite{BurrullLibedinskyPlaza}, the generator $s_i$ is sometimes  denoted by $i$, and $i$ is often taken modulo $3$.

\begin{figure}[ht]
 \resizebox{3in}{!}
 {
\begin{overpic}{xy_alcoves}
\put(48,65){$x_0$}
\put(40,62){$x_1$}
\put(40,53){$x_2$}
\put(33,50){$x_3$}
\put(48,50){$y_0$}
\put(56,37){$y_1$}
\put(62,24){$y_2$}
\put(69,11){$y_3$}
\put(41,36){$\tilde{y}_1$}
\put(34,24){$\tilde{y}_2$}
\put(26,11){$\tilde{y}_3$}
\end{overpic}
}
\caption{\footnotesize{The Coxeter complex in type $\tilde{A}_2$, showing the elements $x_0 = 1$, $x_1 = s_1$, $x_2 = s_1 s_2$, $x_3 = s_1 s_2 s_0$, together with $y_m$ and $\tilde{y}_m$ for small $m$. The heavy lines are the hyperplanes bounding the Weyl chambers.}}
\label{fig:xy_alcoves}
\end{figure}

Returning to our notation, we have $x_0 = 1$, $x_1 = s_1$, $x_2 = s_1 s_2$, and $x_3 = s_1 s_2 s_0$. Then for $m \in \N$, the element $\theta(m,0)$ of $W$ is defined by the initial subword of length $3 + 2m$ of the reduced word
\[
(s_1 s_2 s_1) (s_0 s_2 s_1)^m.
\]
In order to simplify notation we will write $y_m = \theta(m,0)$. 
So $y_0 = s_1 s_2 s_1$ is the longest element in type $A_2$, and we have
\[
y_1 = (s_1 s_2 s_1) s_0 s_2, \quad y_2 = (s_1 s_2 s_1) s_0 s_2 s_1 s_0, \quad y_3 =  (s_1 s_2 s_1) s_0 s_2 s_1 s_0 s_2 s_1, 
\]
and so forth. The family $\{ \theta(0,n) \}_{n \in \N}$ also appears in the statement of Theorem~\ref{thm:LP}. However, we observe that the diagram automorphism which swaps $1$ and $2$ (in the notation of both~\cite{LibedinskyPatimo} and~\cite{BurrullLibedinskyPlaza}) swaps the elements $\theta(m,0)$ and $\theta(0,m)$, for all $m \in \N$; that is, $y_m \sim \theta(0,m)$ for every $m \in \N$. We thus denote by $\tilde{y}_m = \theta(0,m)$ for all $m \in \N$. See Figure~\ref{fig:xy_alcoves} for the locations of these elements of $W$ in the Coxeter complex.

\begin{corollary}\label{cor:reduction} 
Let $(W,S)$ be of type $\tilde{A}_2$. Suppose that for all
\[y \in \{ 1, s_1, s_1 s_2, s_1 s_2 s_0 \} \cup \{ y_m \}_{m \in \N}, \]
the graph $[1,y]_\cB$ can be cubulated. Then for every $y' \in W$ such that $P_{x,y'} = 1$ for all $x \leq y'$, the graph $[1,y']_\cB$ can be cubulated.
\end{corollary}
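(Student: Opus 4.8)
The plan is to deduce \cref{cor:reduction} from the classification of $\sim$-equivalence classes in type $\tilde{A}_2$ due to Burrull--Libedinsky--Plaza, together with \cref{thm:LP}, after observing that both the property ``$[1,y]_\cB$ can be cubulated'' and the property ``$P_{x,y} = 1$ for all $x \leq y$'' depend only on the $\sim$-class of $y$.

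First I would record the following observation, which is implicit in the proof of \cref{cor:diagram}: if $\phi$ is a diagram automorphism of $(W,S)$ and $y' = \phi(y)$, then since $\phi$ preserves the word length $\ell$ and permutes the set of reflections $\cT$, it restricts to an isomorphism of directed graphs $[1,y]_\cB \to [1,y']_\cB$. In particular, $[1,y]_\cB$ can be cubulated if and only if $[1,y']_\cB$ can. Combined with \cref{cor:diagram}, the condition $P_{x,y'} = 1$ for all $x \leq y'$ likewise depends only on the $\sim$-class of $y'$. Now let $y' \in W$ be arbitrary with $P_{x,y'} = 1$ for all $x \leq y'$. By \cite[Section~2.1]{BurrullLibedinskyPlaza}, there is some $y$ with $y \sim y'$ lying in the disjoint union $X \sqcup \Theta \sqcup \Theta_1 \sqcup \Theta_2$; by the preceding remarks it suffices to cubulate $[1,y]_\cB$, and by \cref{lem:KLgenNgen} we still have $\KLbasis_y = \Ngen_y$, equivalently $P_{x,y} = 1$ for all $x \leq y$.

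The proof then splits into three cases. If $y \in \Theta_1 \sqcup \Theta_2$, then \cref{thm:LP}(3) gives $\KLbasis_y \neq \Ngen_y$, a contradiction, so this case does not occur. If $y = x_n \in X$, then \cref{thm:LP}(1) together with \cref{lem:KLgenNgen} forces $n \leq 3$, hence $y \in \{ x_0, x_1, x_2, x_3 \} = \{ 1, s_1, s_1 s_2, s_1 s_2 s_0 \}$, and $[1,y]_\cB$ can be cubulated by hypothesis (indeed each such $x_n$ is standard parabolic Coxeter, so this also follows from \cref{lem:boolean}). Finally, if $y = \theta(m,n) \in \Theta$, then \cref{thm:LP}(2) forces $m = 0$ or $n = 0$: if $n = 0$ then $y = \theta(m,0) = y_m$ and $[1,y]_\cB$ can be cubulated by hypothesis, while if $m = 0$ then $y = \theta(0,n) = \tilde{y}_n$, which satisfies $\tilde{y}_n \sim y_n$ via the diagram automorphism interchanging the two spherical generators, so $[1,y]_\cB \cong [1,y_n]_\cB$ is again cubulated by hypothesis. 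This exhausts all cases and proves the corollary.

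The argument is essentially bookkeeping, and I do not anticipate a genuine obstacle: the two points requiring a little care are justifying that cubulability transfers across diagram automorphisms (a formal consequence of $\phi$ preserving $\ell$ and $\cT$), and correctly matching the subfamilies $\{ \theta(m,0) \}_{m \in \N}$ and $\{ \theta(0,n) \}_{n \in \N}$ with $\{ y_m \}$ and $\{ \tilde{y}_m \}$ under the generator-swapping automorphism. The real content of \cref{thm:A2tilde} is the explicit construction of cubulations of $[1,y_m]_\cB$, which is deferred to \cref{sec:infiniteA2tilde}.
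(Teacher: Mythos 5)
Your proposal is correct and follows essentially the same route as the paper's proof: reduce via \cref{cor:diagram} and the Burrull--Libedinsky--Plaza classification to a representative in $X \sqcup \Theta \sqcup \Theta_1 \sqcup \Theta_2$, eliminate or identify cases using \cref{thm:LP} and $y_m \sim \tilde{y}_m$, and transfer cubulability across diagram automorphisms since these induce isomorphisms of Bruhat graphs. You merely spell out the case analysis and the $\ell$- and $\cT$-preservation argument more explicitly than the paper does.
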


\begin{proof} 
Let $y' \in W$ be such that $P_{x,y'} = 1$ for all $x \leq y'$.  By Corollary~\ref{cor:diagram}, we have $\KLbasis_{y'} = \Ngen_{y'}$. Since $(W,S)$ is of type $\tilde{A}_2$, by Theorem \ref{thm:LP}, the observations about the four families of elements from \cite[Section 2.1]{BurrullLibedinskyPlaza}, and the fact that $y_m \sim \tilde{y}_m$ for all $m \in \N$, we thus have $y' \sim y$ for some $y \in \{ 1, s_1, s_1 s_2, s_1 s_2 s_0 \} \cup \{ y_m \}_{m \in \N}$. 
Now by assumption, $[1,y]_\cB$ can be cubulated. Since diagram automorphisms induce isomorphisms of Bruhat graphs, the graph $[1,y']_\cB$ can thus be cubulated as well.
\end{proof}

We thus have proved that in order to establish Theorem~\ref{thm:A2tilde}, it suffices to cubulate the infinite family of graphs $\{ [1,y_m]_\cB \}_{m \in \N}$, since the handful of special cases $[1,y]_\cB$ for $y \in \{ 1, s_1, s_1 s_2, s_1 s_2 s_0 \}$ are all treated by Lemma \ref{lem:boolean}.

\subsection{Construction of cubulations}\label{sec:infiniteA2tilde}

We now complete the proof of Theorem~\ref{thm:A2tilde}, by cubulating $[1,y_m]_\cB$ for every $m \in \N$. First, the element $y_0 = s_1 s_2 s_1$ is contained in a subsystem of type $A_2$, which is dihedral. Hence by \cref{prop:dihedral}, the graph $[1,y_0]_\cB$ is cubulated by $\cC(1,2)$. Our starting point for the cases $m \geq 1$ is the following observation.

\begin{lemma}\label{lem:sizeBruhatCubical} For any $m \geq 1$, the following sets are all of cardinality $3(m+1)(m+2)$:
\begin{enumerate}
\item the Bruhat interval $[1,y_m] \subset W$;
\item the vertex set of the graph $[1,y_m]_\cB$; and
\item the vertex set of the cubical lattice $\cC(2,m,m+1)$.
\end{enumerate}
\end{lemma}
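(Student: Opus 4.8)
The plan is that claims (2) and (3) are immediate and the content is entirely in claim (1). For (3): by Definition~\ref{defn:cubical} the vertex set of $\cC(2,m,m+1)$ is $\{(m_1,m_2,m_3)\in\Z^{3}\mid 0\le m_1\le 2,\ 0\le m_2\le m,\ 0\le m_3\le m+1\}$, which has $(2+1)(m+1)((m+1)+1)=3(m+1)(m+2)$ elements. For (2): the subgraph $[1,y_m]_\cB$ is by definition (Section~\ref{sec:directed}) the subgraph of the Bruhat graph induced on the vertex set $[1,y_m]$, so its vertex set is exactly the Bruhat interval $[1,y_m]$ and claims (1) and (2) coincide.

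It remains to prove $\#[1,y_m]=3(m+1)(m+2)$, and I would do this by induction on $m\ge 0$, with base case $\#[1,y_0]=6=3\cdot 1\cdot 2$ since $y_0=s_1s_2s_1$ is the longest element of a parabolic subgroup of type $A_2$. For the inductive step, use the explicit reduced word for $y_m$ from Section~\ref{sec:reduction} to write $y_m=y_{m-1}a_mb_m$ with $a_m,b_m\in S$ the two new generators (the pair $(a_m,b_m)$ cycling through $(s_0,s_2)$, $(s_1,s_0)$, $(s_2,s_1)$ according to the residue of $m$ modulo $3$), so that each of the two successive right multiplications increases length by one and hence $[1,y_{m-1}]\subseteq[1,y_{m-1}a_m]\subseteq[1,y_m]$. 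Using the lifting property of Bruhat order~\cite[Prop.~2.2.7]{BjoernerBrenti} together with the subword criterion, one identifies the set of elements gained at each of the two steps --- these turn out to be certain right translates of a lower face of the preceding interval --- and counts them to obtain $\#[1,y_m]=\#[1,y_{m-1}]+6(m+1)$. The recursion then telescopes to $\#[1,y_m]=6+\sum_{j=1}^{m}6(j+1)=3(m+1)(m+2)$.

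Equivalently, and more suggestively, this inductive analysis (tracking lengths rather than just cardinalities) should yield the Poincar\'e polynomial in closed form,
\[
p_{y_m}(z)=\frac{(z^{3}-1)(z^{m+1}-1)(z^{m+2}-1)}{(z-1)^{3}}=(1+z+z^{2})(1+z+\cdots+z^{m})(1+z+\cdots+z^{m+1}),
\]
which by Lemma~\ref{lem:cubicalQuantum} is exactly $g_{\cC(2,m,m+1)}(z)$, so that $\#[1,y_m]=p_{y_m}(1)=3(m+1)(m+2)$; this closed form can also be extracted from the explicit description of $\KLbasis_{y_m}=\Ngen_{y_m}=\sum_{x\le y_m}v^{\ell(y_m)-\ell(x)}\Hgen_x$ in Libedinsky--Patimo~\cite{LibedinskyPatimo}, valid since $y_m=\theta(m,0)$ by Theorem~\ref{thm:LP}(2). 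I expect the main obstacle to be the bookkeeping in the inductive step: determining precisely which elements are added when $y_{m-1}$ is right-multiplied by $a_m$ and then by $b_m$ so that the increment comes out to $6(m+1)$ (equivalently, pinning down the middle coefficients of $p_{y_m}$). This is where the alcove geometry of type $\tilde{A}_2$ depicted in Figure~\ref{fig:xy_alcoves} does the real work, and the clean product form of $p_{y_m}(z)$ is exactly the signal that $\cC(2,m,m+1)$ is the correct candidate for the cubulation constructed in Section~\ref{sec:infiniteA2tilde}.
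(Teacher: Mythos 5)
Your handling of parts (2) and (3) coincides with the paper's: (2) holds because $[1,y_m]_\cB$ is by definition the subgraph induced on the vertex set $[1,y_m]$, and (3) is the direct count $(2+1)(m+1)\bigl((m+1)+1\bigr)=3(m+1)(m+2)$. The entire content is therefore part (1), and here the paper does something much shorter than what you propose: it simply notes that the cardinality of $[1,y_m]=[1,\theta(m,0)]$ is a special case of Lemma~1.4 of \cite{LibedinskyPatimo}, rather than reproving it.

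Your primary route for (1) --- induction on $m$ with increment $6(m+1)$ --- has a genuine gap exactly at the step you yourself flag as the ``main obstacle'': you never identify the elements of $[1,y_m]\setminus[1,y_{m-1}]$ nor prove there are exactly $6(m+1)$ of them. The lifting property and the subword criterion do not by themselves deliver this count; determining which $x\le y_m$ fail to satisfy $x\le y_{m-1}$ (equivalently, the coefficients of $p_{y_m}-p_{y_{m-1}}$) is precisely the nontrivial description of these intervals carried out by Libedinsky--Patimo, so as written your induction defers the whole difficulty to an unproven assertion. The closed product formula you write for $p_{y_m}(z)$ is indeed correct, but it is not available at this point by any argument you give: your proposed derivation is the same incomplete induction, and deducing it instead from Proposition~\ref{prop:cubical-implies-quantum} would require the cubulation of Proposition~\ref{prop:infinite}, whose proof uses the present lemma (for the base case of Lemma~\ref{lem:label}), so that route would be circular. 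Your secondary suggestion --- extracting the cardinality from the explicit formulas of \cite{LibedinskyPatimo}, legitimate since $\KLbasis_{y_m}=\Ngen_{y_m}$ by Theorem~\ref{thm:LP}(2) --- is in substance the paper's citation of their Lemma~1.4; making that reference precise is the clean way to close the gap, whereas completing your self-contained induction would require genuinely new bookkeeping not present in the proposal.
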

\begin{proof} Part (1) is a special case of Lemma 1.4 of~\cite{LibedinskyPatimo}, and (2) follows by definition. From Definition~\ref{defn:cubical} it is clear that for $m \geq 1$, the cubical lattice $\cC(2,m,m+1)$ has \[ (2+1)(m+1)((m+1) + 1) = 3(m+1)(m+2) \] distinct vertices, which proves (3).
\end{proof}

The next statement, which completes the proof of Theorem~\ref{thm:A2tilde}, is motivated by the numerics in Lemma~\ref{lem:sizeBruhatCubical}. We will illustrate its proof by several figures, in which we write $s_{i_1 \cdots i_k}$ for the product $s_{i_1} \cdots s_{i_k}$, to save space.

\begin{prop}\label{prop:infinite} For all integers $m \geq 1$, the graph $[1,y_m]_\cB$ can be cubulated by $\cC(2,m,m+1)$.
\end{prop}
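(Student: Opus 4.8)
The plan is to construct an explicit bijection $\varphi$ from the vertex set of $\cC = \cC(2,m,m+1)$ onto the Bruhat interval $[1,y_m]$ which carries every coordinate-direction edge of $\cC$ to an edge of the Bruhat graph $[1,y_m]_\cB$. Granting such a $\varphi$, the subgraph of $[1,y_m]_\cB$ with vertex set $[1,y_m]$ and edge set $\{(\varphi(u),\varphi(v)) : (u,v)\in E(\cC)\}$ is a spanning subgraph, and $\varphi$ is an isomorphism from $\cC$ onto it (injectivity of $\varphi$ on vertices makes the induced map on edge sets a bijection), so $\cC$ cubulates $[1,y_m]_\cB$. By Lemma~\ref{lem:sizeBruhatCubical} we already know $\#V(\cC) = \#[1,y_m] = 3(m+1)(m+2)$, so it actually suffices to produce a $\varphi$ that is \emph{injective}, takes values in $[1,y_m]$, is rank-preserving in the sense that $\ell(\varphi(v)) = \|v\|_1$ for every vertex $v$, and sends coordinate edges to Bruhat edges; bijectivity then comes for free from the cardinality count. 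Thus the work splits into: (i) defining $\varphi$; (ii) checking that $\varphi$ is injective, rank-preserving, and has image in $[1,y_m]$; and (iii) checking the three families of edges, namely that for each grid edge $\varphi(u)\to\varphi(u+\vec{e}_i)$ there is a reflection $t\in\cT$ with $\varphi(u+\vec{e}_i) = \varphi(u)\,t$ (the requisite length increase being automatic from~(ii)).

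To build $\varphi$ I would induct on $m$, exploiting that the reduced word for $y_{m-1}$ is a prefix of the reduced word for $y_m$, so $y_{m-1}\le y_m$ and $[1,y_{m-1}]\subseteq[1,y_m]$, matched on the lattice side by the inclusion of $\cC(2,m-1,m)$ into $\cC(2,m,m+1)$ as the corner sub-box $\{b\le m-1,\ c\le m\}$. The base case is the dihedral element $y_0$: by Proposition~\ref{prop:dihedral} the graph $[1,y_0]_\cB$ is cubulated by $\cC(1,2)\cong\cC(2,0,1)$ (and if the inductive step degenerates there, one instead checks $m=1$ directly, a finite verification on the $18$ elements of $[1,y_1]$). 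For the inductive step one must extend $\varphi_{m-1}$ over the ``shell'' $V(\cC(2,m,m+1))\setminus V(\cC(2,m-1,m))$, consisting of the $6(m+1)$ vertices with $b=m$ or $c=m+1$, matching them bijectively with the $6(m+1)$ new elements of $[1,y_m]\setminus[1,y_{m-1}]$, described explicitly as the elements of $[1,y_m]$ using the final two-letter block appended to the reduced word $(s_1s_2s_1)(s_0s_2s_1)^{\cdots}$. A closed-form alternative to the induction is to index the grid point $(a,b,c)$ directly by a designated (non-consecutive) subword of the length-$(2m+3)$ reduced word of $y_m$, chosen so that word lengths add; here the order-$3$ rotational diagram symmetry of $\tilde{A}_2$ should cut down the casework, since the appended blocks cycle through $s_1s_0$, $s_2s_1$, $s_0s_2$ as $m$ grows. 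In either approach the construction is to be illustrated by figures showing the Hasse diagram of $[1,y_m]_\cB$ with the cubical lattice superimposed for small $m$.

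The main obstacle is step~(iii), and within it the ``diagonal'' edges of $\cC$ — those not lying on an obvious monotone path from $1$ up to $y_m$. For such an edge $(u,\,u+\vec{e}_i)$ one must verify both that $\varphi(u)$ and $\varphi(u+\vec{e}_i)$ lie in $[1,y_m]$, which is best handled via the subword characterization of Bruhat order (Theorem 2.2.2 of~\cite{BjoernerBrenti}) applied to the fixed reduced word of $y_m$, and that the two elements differ by right multiplication by a reflection. The reflection itself can be exhibited explicitly as a conjugate of a simple reflection, exactly in the spirit of the ``down the path, across, back up the path'' computation in the proof of Proposition~\ref{prop:NFcubical}, or read off geometrically as the wall of the $\tilde{A}_2$ Coxeter complex separating the alcoves labelled by $\varphi(u)$ and $\varphi(u+\vec{e}_i)$ (cf.\ Figure~\ref{fig:xy_alcoves}). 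The remaining bookkeeping — injectivity of $\varphi$ and surjectivity onto $[1,y_m]$ — is forced by Lemma~\ref{lem:sizeBruhatCubical} once injectivity is established, so the genuine content is the definition of $\varphi$ together with the three edge verifications, the diagonal edges being the delicate point.
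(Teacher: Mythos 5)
Your overall framing --- an explicit bijection $\varphi\colon V(\cC(2,m,m+1))\to[1,y_m]$ built by induction on $m$ via the corner inclusion $\cC(2,m-1,m)\subset\cC(2,m,m+1)$, bijectivity deduced from injectivity plus the count in Lemma~\ref{lem:sizeBruhatCubical}, and every grid edge checked to be a Bruhat edge --- is the same skeleton as the paper's argument (whose induction likewise starts with a direct check at $m=1$ on the $18$ elements of $[1,y_1]$). But as it stands there is a genuine gap: you never define $\varphi$ on the shell, and you have no uniform mechanism for the edge verification beyond ``check it by subwords or walls'', which you yourself identify as the delicate point. For general $m$ this is not a finite verification, and treating all edges of the three-dimensional box on the same footing leaves an unbounded amount of casework with no visible argument that closes it.

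The missing idea, which is exactly what the paper's Lemma~\ref{lem:label} supplies, is to use the first coordinate of $\cC(2,m,m+1)$ as a \emph{level} and to realize passage from level $k-1$ to level $k$ by \emph{left} multiplication: $\phi(k,k_2,k_3)=s_k\,\phi(k-1,k_2,k_3)$ with $\ell(\phi(k,k_2,k_3))=\ell(\phi(k-1,k_2,k_3))+1$, for $k=1,2$. This reduces everything to the level-$0$ layer. Indeed, if a level-$0$ edge is witnessed by a reflection $r\in\cT$ with $\phi(v_0')=\phi(v_0)r$ and increasing length, then the corresponding edge at level $1$ or $2$ is automatic, since $s_k\phi(v_0')=(s_k\phi(v_0))\,r$ and the length inequality persists by the displayed length condition; and each vertical edge is right multiplication by the conjugate reflection $\phi(v_0)^{-1}s_k\phi(v_0)$. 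So the only edges requiring genuine work are the level-$0$ ones, and those are handled by the explicit shell labeling (appending generators with indices taken mod $3$, in line with the rotational symmetry you gesture at), chosen so that every new level-$0$ edge is right multiplication by either a simple generator or the longest element of an $A_2$ subsystem, verified against the description of $[1,y_m]$ in \cite{LibedinskyPatimo}. Without this level decomposition, or some equivalent device that reduces the problem to a single layer you can control explicitly, your plan correctly identifies where the difficulty lies but does not yet contain the construction or the argument that resolves it.
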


To prove this proposition, we first establish some useful terminology and notation. To simplify notation, we will sometimes put $\cC_m = \cC(2,m,m+1)$.  
For $k = 0,1,2$, we define the \emph{level $k$ vertices} of $\cC_m$, denoted $V_k(\cC_m)$,  to be all  vertices of $\cC_m$ with first coordinate $k$. That is,
\[
V_k(\cC_m) = \{ (k_1, k_2, k_3) \in V(\cC_m) \mid k_1 = k \}.
\]
Now for $k = 0,1,2$,  we define the  \emph{level $k$ edges} of $\cC_m$, denoted $E_k(\cC_m)$, to be all edges in $\cC_m$ which connect two elements of $V_k(\cC_m)$. The \emph{horizontal edges} of $\cC_m$ are then given by the (disjoint) union of the level $0$, $1$, and $2$ edges. Finally, the \emph{vertical edges} of $\cC_m$ are those from a level $k$ to a level $k + 1$ vertex, for $k = 0,1$. Thus in particular, every edge of $\cC_m$ is either horizontal or vertical (not both).

We can now formulate a precise (but rather technical) statement, which will be key to the proof of Proposition~\ref{prop:infinite}, as follows. 

\begin{lemma}\label{lem:label} 
For each $m \geq 1$, there is a bijection $\phi = \phi_m : V(\cC_m) \to [1,y_m]$ such that all of the following hold:
\begin{enumerate}
\item For every level $0$ edge $(v,v')$ in $\cC_m$, the graph $[1,y_m]_\cB$ has an edge $(\phi(v),\phi(v'))$.
\item For $k = 1,2$:
\begin{enumerate}
\item $\phi(k,k_2,k_3) = s_k \phi(k-1,k_2,k_3)$; and
\item $\ell(\phi(k,k_2,k_3)) = \ell(\phi(k-1,k_2,k_3)) + 1$.
\end{enumerate}
\end{enumerate}
\end{lemma}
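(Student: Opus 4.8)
The plan is to reduce the lemma to a single two-dimensional construction on the bottom level $k_1 = 0$, using the following structural observation. Write $W_0 = \langle s_1, s_2 \rangle$ for the spherical parabolic subgroup of type $A_2$. Since $s_1 s_2 s_1 = s_2 s_1 s_2$ is an initial segment of the reduced expression $(s_1 s_2 s_1)(s_0 s_2 s_1)^m$, both $s_1$ and $s_2$ are left descents of $y_m$, i.e. $\ell(s_i y_m) < \ell(y_m)$ for $i = 1, 2$. By the (left) lifting property of Bruhat order (see \cite{BjoernerBrenti}), $w \le y_m$ then implies $s_i w \le y_m$, so the interval $[1, y_m]$ is a union of left cosets $W_0 w$. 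If $w_0'$ denotes the minimal-length representative of such a coset, then $\ell(u w_0') = \ell(u) + \ell(w_0')$ for all $u \in W_0$, and the six elements of the coset break up according to the partition $W_0 = \{1, s_2\} \sqcup \{s_1, s_1 s_2\} \sqcup \{s_2 s_1, s_1 s_2 s_1\}$; moreover left multiplication by $s_1$ sends the first block onto the second and left multiplication by $s_2$ sends the second onto the third, each time raising length by one, and $(w_0', s_2 w_0')$ is an edge of $[1, y_m]_\cB$.

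It follows that, setting $Z_m = \{\, w_0',\ s_2 w_0' \mid W_0 w_0' \subseteq [1, y_m]\,\}$, we have a disjoint union $[1, y_m] = Z_m \sqcup s_1 Z_m \sqcup s_2 s_1 Z_m$, with $\ell(s_1 w) = \ell(w) + 1$ and $\ell(s_2 s_1 w) = \ell(w) + 2$ for every $w \in Z_m$, and $\# Z_m = (m+1)(m+2) = \# V_0(\cC_m)$ by \cref{lem:sizeBruhatCubical}. So it suffices to produce a bijection $\phi^{(0)} \colon V_0(\cC_m) \to Z_m$ realizing condition (1): grid-adjacent level $0$ vertices must be sent to a pair spanning an edge of $[1, y_m]_\cB$. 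Once $\phi^{(0)}$ is in hand, I would define $\phi_m$ on the upper levels by $\phi_m(1, k_2, k_3) = s_1\,\phi^{(0)}(0, k_2, k_3)$ and $\phi_m(2, k_2, k_3) = s_2 s_1\,\phi^{(0)}(0, k_2, k_3)$; the disjoint union above makes $\phi_m$ a bijection onto $[1, y_m]$, and the two length identities are precisely condition (2).

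For the construction of $\phi^{(0)}$ I would work in the Coxeter complex of $\tilde A_2$: the cosets $W_0 w_0' \subseteq [1, y_m]$ are indexed by the special vertices of one fixed type lying in a triangular region $R_m$ of the associated $A_2$-lattice, with $\# R_m = \binom{m+2}{2}$, and each contributes the adjacent pair $w_0', s_2 w_0' \in Z_m$. I would split the rectangle $V_0(\cC_m) \cong [0, m] \times [0, m+1]$ along a staircase diagonal into two triangular halves, each isomorphic as a grid to $R_m$, send the ``$w_0'$'' copy of $R_m$ into one half and the ``$s_2 w_0'$'' copy into the other, with the diagonal edges of the rectangle carrying the within-coset edges $(w_0', s_2 w_0')$. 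It then remains to verify that grid-adjacent vertices lying in a common half are sent to Bruhat-adjacent minimal coset representatives (respectively their $s_2$-translates) at neighbouring lattice points; this I would do by induction on $m$, passing from $R_{m-1}$ to $R_m$ by adjoining one new diagonal of vertices and recording the new elements of $Z_m$ explicitly as subwords of the reduced word for $y_m$, guided by the accompanying figures, with the base case $m = 1$ checked directly.

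The main obstacle is this last verification: arranging the unfolding of the ``doubled triangle'' $R_m$ onto the rectangle so that it is compatible with Bruhat adjacency uniformly in $m$ and at every vertex of $R_m$ — in particular at the vertices on the boundary of $R_m$, where the coset is truncated by the condition $w_0' \le y_m$ and the local picture degenerates, so that the inductive step must be set up with enough slack to absorb these boundary cases. Granting condition (1), conditions (2) are automatic from the coset bookkeeping, and (as needed later in the proof of \cref{prop:infinite}) the level $1$ and level $2$ horizontal edges of $\cC_m$ come for free, since on those levels $\phi_m$ is obtained from $\phi^{(0)}$ by a fixed left translation, which leaves the product $\phi_m(v)^{-1}\phi_m(v')$ — and hence whether it is a reflection — unchanged.
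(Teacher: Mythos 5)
Your reduction is correct and, in fact, tidier than what the paper does for part (2): since $y_m$ begins with $s_1s_2s_1=s_2s_1s_2$, both $s_1,s_2$ are left descents, the lifting property makes $[1,y_m]$ a union of full left cosets of $W_0=\langle s_1,s_2\rangle$, and writing each coset as $\{w_0',s_2w_0'\}\sqcup s_1\{w_0',s_2w_0'\}\sqcup s_2s_1\{w_0',s_2w_0'\}$ with $\ell(uw_0')=\ell(u)+\ell(w_0')$ does give the disjoint decomposition $[1,y_m]=Z_m\sqcup s_1Z_m\sqcup s_2s_1Z_m$ and makes (2)(a),(2)(b) automatic once a level-$0$ bijection exists (the paper instead checks (2)(b) via inversion sets and the Libedinsky--Patimo description). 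The cardinality count $\#Z_m=(m+1)(m+2)=\#V_0(\cC_m)$ is also right.

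However, there is a genuine gap: condition (1) is the entire substance of the lemma, and you do not prove it. You describe a plan — unfold the triangular region $R_m$ of cosets onto the $(m+1)\times(m+2)$ level-$0$ grid along a ``staircase diagonal,'' with the diagonal edges carrying the within-coset edges $(w_0',s_2w_0')$ — and then explicitly defer the verification, naming it yourself as ``the main obstacle.'' Beyond being unproved, the plan as stated does not match the structure of the level-$0$ edge set: not every grid edge joining the two halves is a within-coset edge. Already at $m=1$ (the paper's base case, with level $0$ labeled $1,s_2,s_2s_0,s_0,s_0s_2,s_2s_0s_2$), the edge from $s_2$ to $s_0s_2$ joins an $s_2$-translate in one half to the minimal representative of a \emph{different} coset in the other half; its Bruhat-edge status (here via the reflection $s_2s_0s_2$) is exactly the kind of cross-coset adjacency your dichotomy does not account for, and which must be verified uniformly in $m$, including at the boundary of $R_m$. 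The paper resolves precisely this point by an explicit induction on $m$: it extends the level-$0$ labeling from $\cC_m$ to $\cC_{m+1}$ by right-multiplying boundary labels by $s_m$ (indices mod $3$), and checks bijectivity and the new level-$0$ Bruhat edges against the explicit description of $[1,y_m]$ in Libedinsky--Patimo. Until you pin down your level-$0$ map explicitly (e.g.\ by such an inductive labeling) and verify Bruhat adjacency for \emph{all} level-$0$ grid edges, including the cross-coset ones, the lemma is not established.
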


\begin{figure}[h]
\begin{overpic}{y1Bruhat}
\put(48,50){$1$}
\put(48,62){$s_0$}
\put(65,50){$s_{20}$}
\put(57,46){$s_2$}
\put(55,68){$s_{02}$}
\put(65,60){$s_{202}$}
\put(38,46){\textcolor{blue}{$s_1$}}
\put(26,50){\textcolor{blue}{$s_{10}$}}
\put(26,28){\textcolor{blue}{$s_{120}$}}
\put(35,34){\textcolor{blue}{$s_{12}$}}
\put(17,44){\textcolor{blue}{$s_{102}$}}
\put(16,34){\textcolor{blue}{$s_{1202}$}}
\put(56,34){\textcolor{red}{$s_{21}$}}
\put(64,28){\textcolor{red}{$s_{210}$}}
\put(44,19){\footnotesize{\textcolor{red}{$s_{2120}$}}}
\put(45,28){\textcolor{red}{$s_{212}$}}
\put(64,19){\footnotesize{\textcolor{red}{$s_{2102}$}}}
\put(57,12){\textcolor{red}{$y_1$}}
\end{overpic}
\caption{\footnotesize{The Bruhat interval $[1,y_1]$ for $y_1 = (s_1 s_2 s_1)s_0 s_2$, labeled to illustrate the proof of the case $m = 1$ in Lemma~\ref{lem:label}.}}
\label{fig:y1Bruhat}
\end{figure}

\begin{proof}
We carry out induction on $m \geq 1$. For $m = 1$, we have $y_1 = (s_1 s_2 s_1)s_0 s_2$, and that the interval $[1,y_1]$ contains $18$ elements, by Lemma~\ref{lem:sizeBruhatCubical}.  These 18 elements are shaded gray in Figure~\ref{fig:y1Bruhat}. We define $\phi$ on the level $0$ vertices of $\cC_1 = \cC(2,1,2)$ by:
 \[
 \phi(0,0,0) = 1, \quad \phi(0,0,1) = s_2 , \quad \phi(0,0,2) = s_2 s_0, \]\[ \phi(0,1,0) = s_0, \quad \phi(0,1,1) = s_0 s_2, \quad \phi(0,1,2) = s_2 s_0 s_2.
 \]
Identifying $\cC(1,2)$ with the subgraph $\cC(0,1,2)$ of $\cC_1$, this labeling of the level $0$ vertices is depicted on the left of Figure~\ref{fig:m=1}.
It is then straightforward to verify that the restriction of $\phi$ to $V_0(\cC_1)$ is a bijection onto a subset of $[1,y_1]$. This subset is labeled in black in Figures~\ref{fig:y1Bruhat} and~\ref{fig:m=1}. Since $\phi(V_0(\cC_1))$ is the set of elements of the parabolic subgroup of $W$ of type $A_2$ generated by $s_0$ and $s_2$, it is easy to see that part (1) holds.

\begin{figure}[h]
\begin{subfigure}[m]{.4\linewidth}\centering
		\includegraphics[page=6]{fullgraph}
\end{subfigure}
\begin{subfigure}[m]{.4\linewidth}\centering
		\includegraphics[page=7]{fullgraph}
\end{subfigure}
\caption{\footnotesize{The labeling $\phi$ of the vertices of $\cC(0,1,2)\cong \cC(1,2) $ on the left, and of all vertices of $\cC(2,1,2)$ on the right. On both the left and right, the solid edges correspond to right-multiplication by a simple generator (either $s_0$ or $s_2$) and the dashed edges correspond to right-multiplication by $s_{202}$. The dotted edges on the right correspond to left-multiplication by $s_k$ (to go from level $k$ to level $k+1$), for $k = 1,2$.}}
\label{fig:m=1}
\end{figure}

On the remaining vertices of $\cC_1$, we define $\phi(1,k_2,k_3) = s_1 \phi(0,k_2,k_3)$ and then $\phi(2,k_2,k_3) = s_2 \phi(1,k_2,k_3)$. From the description of the Bruhat intervals $[1,y_m] = [1,\theta(m,0)]$ developed in the introduction to~\cite{LibedinskyPatimo}, one can verify that $\phi$ is then a bijection $V(\cC_1) \to [1,y_1]$. In particular, for the unique vertex of $\cC_1 = \cC(2,1,2)$ of maximal rank we have 
\[
\phi(2,1,2) = s_2 \phi(1,1,2) = s_2 s_1 \phi(0,1,2) = s_2 s_1 s_2 s_0 s_2 = s_1 s_2 s_1 s_0 s_2 = y_1.
\]
In Figures~\ref{fig:y1Bruhat} and~\ref{fig:m=1}, the elements $\phi(1,k_2,k_3) = s_1 \phi(0,k_2,k_3)$ are labeled in blue, and the elements  $\phi(2,k_2,k_3) = s_2 \phi(1,k_2,k_3)$ are labeled in red.
Now (2)(a) holds by construction, and (2)(b) can be checked quickly. That is, for $m = 1$ we have constructed a bijection $\phi:V(\cC_1) \to [1,y_1]$ satisfying the statement.

\begin{figure}[ht!]
\begin{overpic}{y1y2y3Bruhat}
\put(56.5,52){$1$}
\put(56,60){$s_0$}
\put(67,52){$s_{20}$}
\put(62,50){$s_2$}
\put(61,63){$s_{02}$}
\put(67,60){$s_{202}$}
\put(50,63){$s_{01}$}
\put(61,70){$s_{021}$}
\put(73,50){$s_{201}$}
\put(66,74){\footnotesize{$s_{0210}$}}
\put(72,63){\footnotesize{$s_{2021}$}}
\put(72,69){\footnotesize{$s_{02101}$}}
\put(78,53){\footnotesize{$s_{2012}$}}
\put(78,59){\footnotesize{$s_{20212}$}}
\put(77,75){\footnotesize{$s_{021012}$}}
\put(49,70){$s_{012}$}
\put(54.5,74){\footnotesize{$s_{0212}$}}
\put(66,79){\footnotesize{$s_{02102}$}}
\put(71,86){\footnotesize{$s_{021021}$}}
\put(77,79){\footnotesize{$s_{0210212}$}}
\put(50,50){\textcolor{blue}{$s_1$}}
\put(49.5,43){\textcolor{blue}{$s_{12}$}}
\put(61.5,43){\textcolor{red}{$s_{21}$}}
\put(55,39){\textcolor{red}{$s_{212}$}}
\put(62,30){\textcolor{red}{$y_1$}}
\put(68,20){\textcolor{red}{$y_2$}}
\put(74,10){\textcolor{red}{$y_3$}}
\end{overpic}
\caption{\footnotesize{The Bruhat intervals $[1,y_m]$ for $m = 1,2,3$, in successively darker shades of gray. The elements labeling the level $0$ vertices of $\cC_m$ are indicated in black, and certain elements at level 1 (respectively, level 2) are indicated in blue (respectively, red).}}\label{fig:y1y2y3Bruhat}
\end{figure}

\begin{figure}
\begin{subfigure}[m]{.45\linewidth}\centering
	\includegraphics[page=8]{fullgraph}
\end{subfigure}
\begin{subfigure}[m]{.45\linewidth}\centering
	\includegraphics[page=9]{fullgraph}
\end{subfigure}
	\caption{\footnotesize{The labeling $\phi$ of the vertices of $\cC(0,3,4)\cong \cC(3,4) $ on the left, and of certain vertices of $\cC(2,3,4)$ on the right. On both the left and right, the solid edges correspond to right-multiplication by a simple generator, and the dashed edges to right-multiplication by the longest element in a subsystem of type $A_2$. On the right, the dotted vertical edges correspond to left-multiplication by $s_k$ (to go from level $k$ to level $k+1$), for $k = 1,2$. }}\label{fig:m=123}
\end{figure}

Assume by induction that we have a bijection $\phi_m:V(\cC_m) \to [1,y_m]$ satisfying the statement, for $m \geq 1$. The Bruhat intervals $[1,y_m]$ for $m = 1,2,3$ are depicted in Figure~\ref{fig:y1y2y3Bruhat}. We identify $\cC_m = \cC(2,m,m+1)$ with its natural image in $\cC_{m+1} = \cC(2,m+1,m+2)$, and define $\phi_{m+1}(v) = \phi_m(v)$ for all $v \in V(\cC_m)$. We now explain how to define $\phi_{m+1}$ on the remaining vertices of $\cC_{m+1}$. This construction will depend on the value of $m$ modulo $3$, and so we now work with simple generators $s_i$ where the subscript $i$ is taken modulo $3$. For all $0 \leq k_2 \leq m$, we define
\[
\phi_{m+1}(0,k_2,m+2) = \phi_m(0,k_2,m+1) s_m
\]
and for all $0 \leq k_3 \leq m$, we define
\[
\phi_{m+1}(0,m+1,k_3) = \phi_m(0,m,k_3) s_m.
\]
We then define $\phi_{m+1}$ on the remaining two level $0$ vertices of $\cC_{m+1}$ by
\[
\phi_{m+1}(0,m+1,m+1) = \phi_{m+1}(0,m+1,m) s_{m-1}\] and then \[ \phi_{m+1}(0,m+1,m+2) = \phi_{m+1}(0,m+1,m+1) s_m.
\]
Identifying $\cC(m,m+1)$ with the subgraph $\cC(0,m,m+1)$ of $\cC_m$, this labeling of the level~$0$ vertices is depicted in Figure~\ref{fig:m=123}, for $m = 3$.
Finally, as in the base case of the induction, we define $\phi_{m+1}(1,k_2,k_3) = s_1 \phi_{m+1}(0,k_2,k_3)$ and $\phi_{m+1}(2,k_2,k_3) = s_2 \phi_{m+1}(1,k_2,k_3)$, so that part (2)(a) of the statement holds by construction.

Using the description of the Bruhat interval $[1,y_m]$ from~\cite{LibedinskyPatimo}, it is straightforward to check that $\phi_{m+1}$ is a bijection from $V(\cC_{m+1})$ to $[1,y_{m+1}]$. It can also be easily verified from this description and our construction that if $(v,v')$ is a level $0$ edge which is in $\cC_{m+1}$ but not in $\cC_m$, then $\ell(\phi(v')) = \ell(\phi(v)) + 1$. Moreover, for all such edges $(v,v')$, the element $\phi(v')$ is obtained from $\phi(v)$ by either right-multiplication by a single generator, or right-multiplication by the longest element in a subsystem of type $A_2$. Therefore by induction, part (1) holds. Property (2)(b) can then be obtained without difficulty from the description in~\cite{LibedinskyPatimo}, by considering inversion sets for the relevant elements of $W$.
\end{proof}

We can now prove Proposition \ref{prop:infinite}, from which Theorem \ref{thm:A2tilde} immediately follows.

\begin{proof}[Proof of Proposition~\ref{prop:infinite}]
Fix $m \geq 1$ and let $\phi:V(\cC_m) \to [1,y_m]$ be as in Lemma~\ref{lem:label}. Since $\phi$ is a bijection from the vertex set of $\cC_m$ to the vertex set of $[1,y_m]_\cB$, we just need to see that for every edge $(v,v')$ in $\cC_m$, there is an edge $(\phi(v),\phi(v'))$ in $[1,y_m]_\cB$. Part (1) of Lemma~\ref{lem:label} gives this for the level $0$ edges of $\cC_m$. 

Now suppose $(v,v')$ is a level $1$ edge of $\cC_m$. Write $v = (1,k_2,k_3)$ and $v' = (1,k_2',k_3')$ and put $v_0 = (0,k_2,k_3)$ and $v_0' = (0,k_2',k_3')$. Then since $(v,v')$ is a horizontal edge of $\cC_m$, there must be a (horizontal) edge $(v_0, v_0')$. Hence by (1) of Lemma~\ref{lem:label}, there is an edge $(\phi(v_0),\phi(v_0'))$ in $[1,y_m]_\cB$.  This means exactly that there is a reflection $r$ such that $\phi(v_0) r = \phi(v_0')$ and $\ell(\phi(v_0)r) > \ell(\phi(v_0))$. Now by (2)(a) of Lemma~\ref{lem:label} with $k = 1$, we have $\phi(v) = s_1 \phi(v_0)$ and $\phi(v') = s_1 \phi(v_0')$. Thus \[\phi(v) r = s_1 \phi(v_0) r = s_1 \phi(v_0') = \phi(v').\] Using (2)(b) of Lemma~\ref{lem:label} with $k = 1$, we have $\ell(\phi(v)) = \ell(\phi(v_0)) + 1$ and $\ell(\phi(v')) = \ell(\phi(v_0')) + 1$. Hence
\[
\ell(\phi(v) r) =  \ell(\phi(v_0')) + 1 = \ell(\phi(v_0)r) +1 > \ell(\phi(v_0)) + 1 = \ell(\phi(v)).
\]
Thus there is an edge in $[1,y_m]_\cB$ from $\phi(v)$ to $\phi(v')$. The argument is similar for the level $2$ edges of $\cC_m$.

It remains to consider the vertical edges of $\cC_m$. Suppose $v_0 = (0,k_2,k_3)$ is a level $0$ vertex of $\cC_m$ and put $v_1 = (1,k_2,k_3)$, so that there is a vertical edge $(v_0,v_1)$. Then by (2)(a) of Lemma~\ref{lem:label}, we have $\phi(v_1) = s_1 \phi(v_0)$, and by (2)(b) of Lemma~\ref{lem:label}, we have $\ell(\phi(v_1)) = \ell(\phi(v_0)) + 1$. Hence in particular, $\ell(\phi(v_1)) > \ell(\phi(v_0))$. To see that $\phi(v_1)$ is obtained from $\phi(v_0)$ by right-multiplication by a reflection, let $r = \phi(v_0)^{-1} s_1 \phi(v_0)$. Then $r$ is a reflection and we have $\phi(v_1) = \phi(v_0) r$. Thus there is an edge $(\phi(v_0) , \phi(v_1))$ in $[1,y_m]_\cB$. A similar argument holds for the vertical edges from level~$1$ to level $2$. Thus $[1,y_m]_\cB$ is cubulated by $\cC_m$, as required.
\end{proof}

\MainTheoremD

\begin{proof}
Let $(W,S)$ be of type $\tilde{A}_2$. By Lemma~\ref{lem:boolean}, Proposition~\ref{prop:dihedral}, and Proposition~\ref{prop:infinite}, the graphs $[1,y]_\cB$ for all $y \in \{ 1, s_1, s_1 s_2, s_1 s_2 s_0 \} \cup \{ y_m \}_{m \in \N}$ can be cubulated. Therefore, Corollary~\ref{cor:reduction} says that whenever $P_{x,y'} = 1$ for all $x \leq y'$, the Bruhat graph $[1,y']_\cB$ can be cubulated.  In other words, the converse to Theorem~\ref{thm:Cubical=>Trivial} holds. 
\end{proof}


\appendix

\section{Cubulation of the Bruhat graph in types \texorpdfstring{$A$}{A} and \texorpdfstring{$B/C$}{B/C}}\label{sec:appendix}

In this appendix we give our explicit constructions of cubulations of the Bruhat graph $\cB = [1,w_0]_\cB$ in types $A$ and $B/C$. As discussed in the introduction, our constructions give an alternative approach to the Lehmer codes for the Coxeter systems of these types constructed in~\cite{Bolognini2025}: we make explicit use of normal form forests, and employ graph-theoretic arguments. In Section~\ref{sec:NF} we recall background material on normal forms, and then in Section~\ref{sec:NFF_cubulation} we give our constructions.

\subsection{Normal forms and forests}\label{sec:NF}

In this section, we follow \cite[Sec.~3.4]{BjoernerBrenti}, based upon the work of du Cloux~\cite{duCloux}, to review the construction of the normal form forest associated to a Coxeter system. Throughout, $(W,S)$ is an arbitrary Coxeter system, and we fix an indexing of the generating set $S = \{ s_1, \dots, s_n\}$.

A \emph{normal form} for $(W,S)$ is a specific choice of reduced expression for every element of $W,$ of which there are typically several systematic choices. 
The normal form we consider is the \emph{lexicographically first normal form}; that is, for each $x \in W$, we choose the reduced expression for $x$ which is first in the chosen lexicographic order on $S$. Denote this reduced expression by $\NF(x)$. Note that if $J \subseteq S$ and $x \in W_J$, where $\WJ$ denotes the standard parabolic subgroup generated by $J$, then $\NF(x)$ is the same as the lexicographically first normal form of $x$ regarded as an element of the subgroup $W_J$. Proposition \ref{prop:NF} below describes a factorization for this lexicographically first normal form. 

For any $J \subseteq S$, write $\JW$ for the set of minimal-length representatives of the right cosets $\WJ \backslash W$. An element $x \in W$ is in $\JW$ if and only if no reduced expression for $x$ begins with a letter from $J$. In the special case where $J = \{ s_1, \dots, s_j\} = \{ s_i \mid i \in [j] \}$ for some $j \in [n]$, we will write $\Wj$ for the subgroup $\WJ$ and $\jW$ for the set of minimal-length representatives of the cosets $\Wj \backslash W$. An element $x \in W$ is in $\jW$ if and only if no reduced expression for $x$ begins with a letter $s_i$ where $i \in [j]$. It will be convenient to define $W_\emptyset = W_{[0]}$ to be the trivial subgroup of $W$, in which case $\prescript{[0]}{}W = W$. Since $W_{[j-1]} \leq \Wj$ for all $j \in [n]$, we denote by $\prescript{[j-1]}{}{(\Wj)}$ the set of minimal-length representatives of the right cosets $W_{[j-1]} \backslash \Wj$. An element $x \in \Wj$ is in $\prescript{[j-1]}{}{(\Wj)}$ if and only if every reduced expression for $x$ begins with $s_j$.

\begin{prop}[Proposition 3.4.2 of~\cite{BjoernerBrenti}]\label{prop:NF} 
Let $(W,S)$ be any Coxeter system. 
Any $x \in W$ can be written uniquely as $x = x_1 \cdots x_n$ where $x_j \in \prescript{[j-1]}{}{(W_{[j]})}$ for all $j \in [n]$. Moreover, $\NF(x) = \NF(x_1) \cdots \NF(x_n).$
\end{prop}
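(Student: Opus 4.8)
\textbf{Proof plan for Proposition~\ref{prop:NF}.}

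The plan is to prove the decomposition $x = x_1 \cdots x_n$ by induction on $n$, peeling off the final factor using the coset structure $W_{[n-1]} \backslash W$, and then recursing inside $W_{[n-1]}$. First I would recall the standard fact about minimal-length coset representatives: for any standard parabolic $W_J \leq W$, every $x \in W$ factors uniquely as $x = u \cdot v$ with $u \in W_J$ and $v \in \prescript{J}{}{W}$ (the minimal-length representative of the coset $W_J x$), and moreover $\ell(x) = \ell(u) + \ell(v)$; see for instance \cite[Prop.~2.4.4]{BjoernerBrenti}. Applying this with $J = \{s_1,\dots,s_{n-1}\} = [n-1]$, we write $x = x' \cdot x_n$ uniquely with $x' \in W_{[n-1]}$ and $x_n \in \prescript{[n-1]}{}{W}$. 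But $x_n$ lies in $W = W_{[n]}$, so $x_n \in \prescript{[n-1]}{}{(W_{[n]})}$ exactly as required. By the inductive hypothesis applied to $W_{[n-1]}$ (which is itself a Coxeter system with ordered generators $s_1,\dots,s_{n-1}$), we factor $x' = x_1 \cdots x_{n-1}$ uniquely with $x_j \in \prescript{[j-1]}{}{(W_{[j]})}$ for $j \in [n-1]$. Uniqueness of the full factorization then follows from uniqueness at each stage: if $x = y_1 \cdots y_n$ with $y_j \in \prescript{[j-1]}{}{(W_{[j]})}$, then $y_1 \cdots y_{n-1} \in W_{[n-1]}$ and $y_n \in \prescript{[n-1]}{}{W}$, so by uniqueness of the parabolic factorization $y_n = x_n$ and $y_1 \cdots y_{n-1} = x'$, and then the inductive hypothesis forces $y_j = x_j$ for $j < n$.

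The second assertion, that $\NF(x) = \NF(x_1) \cdots \NF(x_n)$, requires two ingredients. The first is that the concatenation $\NF(x_1) \cdots \NF(x_n)$ is a reduced expression for $x$: this follows because at each stage $\ell(x) = \ell(x') + \ell(x_n)$ with $x' \in W_{[n-1]}$, so lengths add up along the factorization, i.e. $\ell(x) = \sum_j \ell(x_j)$, and concatenating reduced expressions whose lengths add is reduced. The second, more delicate, ingredient is that this particular reduced expression is the lexicographically first one. Here I would argue that the lexicographically first reduced expression for $x$ must begin with $\NF(x_1)$: since $x_1 \in \prescript{[0]}{}{(W_{[1]})} = W_{[1]} = \langle s_1 \rangle$, the factor $x_1$ is either $1$ or $s_1$, and more generally, reasoning prefix by prefix, the lex-first word is built greedily by always choosing the smallest available generator. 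One shows that the maximal prefix of $\NF(x)$ lying in $W_{[1]}$ equals $\NF(x_1)$ — equivalently, that after reading off all leading occurrences of $s_1$, the remaining element lies in $\prescript{[1]}{}{W}$ — and similarly that stripping $\NF(x_1)$ from the front of $\NF(x)$ leaves the lex-first normal form of $x_1^{-1} x = x_2 \cdots x_n \in \prescript{[1]}{}{W}$ as an element of that Coxeter system. Then induction on $n$ closes the argument. An alternative, cleaner route is to characterize $\NF$ directly: the lex-first reduced expression for $x$ is obtained by repeatedly taking the smallest-indexed right... (wait — left) descent; one then checks by induction on $\ell(x)$ that this greedy procedure produces exactly $\NF(x_1)\cdots\NF(x_n)$, using that $s_1$ can be a left-descent of $x$ only through the $x_1$ factor, that $s_2$-or-smaller left-descents come only through $x_1 x_2$, and so on.

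The main obstacle is the lexicographic-minimality claim rather than the group-theoretic decomposition, which is routine. Concretely, the subtle point is to confirm that truncating the known reduced expression $\NF(x_1)\cdots\NF(x_n)$ of $x$ after its first few blocks yields, at each stage, the genuine lex-first normal form of the corresponding truncated product viewed inside the smaller parabolic $W_{[j]}$ — and that no shorter or lexicographically earlier reduced expression for $x$ can "mix" letters across the block boundaries. This hinges on the fact that $x_j \in \prescript{[j-1]}{}{(W_{[j]})}$ means every reduced expression of $x_j$ starts with $s_j$, together with the exchange condition to rule out rearrangements; I would isolate this as the key sublemma and prove it carefully, after which the statement $\NF(x) = \NF(x_1)\cdots\NF(x_n)$ follows formally. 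Since this is Proposition~3.4.2 of~\cite{BjoernerBrenti}, a fully detailed proof is available there, and in the paper I would likely give the decomposition argument in full and cite the reference for the normal-form compatibility, or conversely sketch both briefly as above.
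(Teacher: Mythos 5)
The paper does not prove this proposition at all: it is quoted verbatim, with attribution, as Proposition~3.4.2 of Bj\"orner--Brenti, so there is no in-paper argument to compare yours against; citing the reference, as you propose at the end, is exactly what the authors do. Judged on its own merits, the first half of your plan is fine: peeling off $x_n$ via the parabolic factorization $W = W_{[n-1]}\cdot \prescript{[n-1]}{}{W}$ (with lengths adding) and inducting on $n$ gives existence and uniqueness of the decomposition, and the additivity $\ell(x)=\sum_j \ell(x_j)$ shows that $\NF(x_1)\cdots\NF(x_n)$ is at least a reduced expression for $x$.

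The gap is in your main route for the normal-form statement. You propose to strip $\NF(x_1)$ from the front and recurse on $x_1^{-1}x = x_2\cdots x_n \in \prescript{[1]}{}{W}$ ``as an element of that Coxeter system'' --- but $\prescript{[1]}{}{W}$ is only a set of coset representatives, not a standard parabolic subgroup: reduced words for $x_2\cdots x_n$ can still contain $s_1$ (already in type $A_2$, $x_2 = s_2s_1$), so there is no smaller Coxeter system to recurse into and ``induction on $n$'' does not close from the left. The inductions that do work are: (i) peel the \emph{last} block, proving $\NF(x)=\NF(x')\NF(x_n)$ for $x'=x_1\cdots x_{n-1}\in W_{[n-1]}$, using that the left descents of $x$ with index $\le n-1$ are exactly those of $x'$ (a consequence of $\ell(wx_n)=\ell(w)+\ell(x_n)$ for all $w\in W_{[n-1]}$) together with the remark that $\NF$ of an element of $W_{[n-1]}$ is the same computed in $W_{[n-1]}$ or in $W$, and then recurse inside the genuine Coxeter system $W_{[n-1]}$; or (ii) your alternative greedy route by induction on $\ell(x)$, which does go through once you prove the lemma you gesture at: the smallest left descent of $x$ is $s_j$ where $j=\min\{i : x_i\ne 1\}$, because $x_j\cdots x_n \in \prescript{[j-1]}{}{W}$ (minimal coset representatives compose along the tower $W_{[j-1]}\le W_{[j]}\le\cdots\le W$), and one must then track how the block factorization of $s_jx$ differs from that of $x$ (note $s_jx_j$ need not lie in $\prescript{[j-1]}{}{(W_{[j]})}$). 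So the key sublemma you isolate --- that index-$\le k$ descents are governed by $x_1\cdots x_k$ --- is the right one, but as written your primary recursion would fail, and the rest remains a sketch rather than a proof.
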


The \emph{normal form forest} of $(W,S)$ consists of edge-labeled rooted trees $\tau_1,\dots,\tau_n$, with vertices and edges defined as follows. For a fixed $j \in [n]$, the vertices of $\tau_j$ correspond to the elements of the set $\prescript{[j-1]}{}{(W_{[j]})}$.  The edges of $\tau_j$ are labeled by elements of $S$ such that the (unique) path from the root of $\tau_j$ to the vertex $x_j \in \prescript{[j-1]}{}{(W_{[j]})}$ is labeled by the reduced expression $\NF(x_j)$. Then by Proposition~\ref{prop:NF}, the set of all normal forms $\{ \NF(x) \mid x \in W \}$ is obtained by concatenating all (possibly empty) rooted paths in $\tau_1, \dots, \tau_n$, in this order. 

\begin{figure}[h]
 \resizebox{1.5in}{!}
 {
\begin{overpic}{A3-NFF}
\put(16,5){$\tau_1$}
\put(43.5,5){$\tau_2$}
\put(71.5,5){$\tau_3$}
\put(13,26){$1$}
\put(40,26){$2$}
\put(67,26){$3$}
\put(40,48){$1$}
\put(67,48){$2$}
\put(67,70){$1$}
\end{overpic}
}
\caption{\footnotesize{The normal form forest for type $A_3$ obtained by labeling nodes of the Dynkin diagram from left to right.}}
\label{fig:A3NFF}
\end{figure}

\begin{example}\label{ex:A3NFF}
Let $(W,S)$ have type $A_3$, in which case $\prescript{[0]}{}W = W \cong S_4$.  Label the nodes of the Dynkin diagram from left to right. We then have that $\prescript{[0]}{}{(W_{[1]})}$ is the set of minimal-length representatives of the right cosets $W_{[0]}\backslash W_{[1]} = W_{[1]}$, where $W_{[1]} = \langle s_1 \rangle$. The tree $\tau_1$ is thus labeled by 1, and the vertices of $\tau_1$ correspond to the elements $\{1, s_1\}$.

Now consider the set of minimal-length representatives of the right cosets $W_{[1]} \backslash W_{[2]}$, where $W_{[2]} = \langle s_1, s_2 \rangle$. As every reduced expression in $\prescript{[1]}{}{(W_{[2]})}$ begins with $s_2$, the edge label immediately above the root vertex of $\tau_2$ is 2.  The number of minimal length elements in $W_{[1]} \backslash W_{[2]}$ equals $|W_{[2]}| / |W_{[1]}| = 6/2 = 3$. The root and the adjacent vertex correspond to $\{1,s_2\}$, so there is one remaining vertex, which must be labeled by 1 in order that the elements of $W_{[1]} \backslash W_{[2]}$ all begin with $s_2$.  The tree $\tau_2$ is thus labeled by 2, then 1, as shown in Figure \ref{fig:A3NFF}, with the vertices corresponding to the elements $\{1, s_2, s_2s_1\}$.

This pattern continues, such that the label above the root vertex of $\tau_3$ is 3. The additional edge-labels are given by the remaining indices listed in decreasing order, so that the non-identity elements of $\prescript{[2]}{}{(W_{[3]})} = \{1, s_3, s_3s_2, s_3s_2s_1\}$ necessarily begin with $s_3$; see Figure \ref{fig:A3NFF}.

The normal form for $w_0$, for example, given by applying Proposition \ref{prop:NF} to this normal form forest, is then $w_0 = s_1 \cdot s_2s_1 \cdot s_3s_2s_1$, obtained by concatenating the expressions corresponding to each of the paths $\tau_1, \tau_2, \tau_3$, where we read upward from the root of each tree.
\end{example}

\subsection{Normal form forests and cubulation}\label{sec:NFF_cubulation}

In this section, we prove that the Bruhat graph $\cB = [1,w_0]_\cB$ is spanned by a cubical lattice whenever $W$ is of type $A$ or $B/C$. In Section~\ref{sec:paths} we construct a cubulation of $\cB$ whenever each tree in a normal form forest (NFF) is a path. Then in Section~\ref{sec:classical} we prove that such a NFF exists in types $A$ and $B/C$.


\subsubsection{Normal form forests of paths}\label{sec:paths}

In this section, we prove that if each tree in a normal form forest from Section \ref{sec:NF} is a path, then the Bruhat graph $\cB = [1,w_0]_\cB$ can be cubulated.

\begin{prop}\label{prop:NFcubical} 
Let $(W,S)$ be a finite Coxeter system with longest element $w_0 \in W$. Suppose $(W,S)$ has a normal form forest $\tau_1,\dots,\tau_n$ in which every rooted tree $\tau_i$ is the path consisting of $\ell_i \geq 1$ edges. Then $\cB = [1,w_0]_\cB$ is cubulated by $\cC(\ell_1,\dots,\ell_n)$.
\end{prop}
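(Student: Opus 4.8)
The plan is to build the cubulation explicitly as an embedding of $\cC(\ell_1,\dots,\ell_n)$ into $[1,w_0]_\cB$ using the normal form factorization of Proposition~\ref{prop:NF}. By hypothesis, for each $i \in [n]$ the tree $\tau_i$ is a path with $\ell_i$ edges, so the vertices of $\tau_i$ — i.e.\ the elements of $\prescript{[i-1]}{}{(W_{[i]})}$ — are exactly the $\ell_i+1$ prefixes $1 = x_i^{(0)}, x_i^{(1)}, \dots, x_i^{(\ell_i)}$ of the single reduced word $\NF$ reading up the path, where $\ell(x_i^{(m)}) = m$. Proposition~\ref{prop:NF} then says the map
\[
\varphi \colon V(\cC(\ell_1,\dots,\ell_n)) \to W, \qquad \varphi(m_1,\dots,m_n) = x_1^{(m_1)} x_2^{(m_2)} \cdots x_n^{(m_n)}
\]
is a bijection onto $W = [1,w_0]$, and moreover $\NF(\varphi(m_1,\dots,m_n))$ is the concatenation of the corresponding path segments, so $\ell(\varphi(m_1,\dots,m_n)) = m_1 + \cdots + m_n = \|(m_1,\dots,m_n)\|_1$. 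In particular $\varphi$ carries the unique $L^1$-rank function on $\cC$ (Lemma~\ref{lem:cubicalRank}) to the length function $\ell$ on $W$.

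First I would verify that $\varphi$ is a well-defined bijection from $V(\cC)$ to $V([1,w_0]_\cB) = W$: this is immediate from Proposition~\ref{prop:NF} once one checks that the vertex set of the path $\tau_i$ really is $\{x_i^{(0)},\dots,x_i^{(\ell_i)}\}$ with $x_i^{(m)}$ at distance $m$ from the root, which is the defining property of the normal form forest. Next, I would show $\varphi$ sends edges to edges, i.e.\ that each edge of $\cC$ maps to an edge of $[1,w_0]_\cB$. An edge of $\cC$ has the form $(v, v + \vec e_i)$ with $v = (m_1,\dots,m_n)$ and $m_i < \ell_i$. Writing $u = \varphi(v)$ and $w = \varphi(v + \vec e_i)$, the reduced word $\NF(w)$ is obtained from $\NF(u)$ by inserting one additional letter (the next edge-label of $\tau_i$) in the $i$-th block, so $\ell(w) = \ell(u) + 1$. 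It remains to see that $u^{-1}w \in \cT$, equivalently that $u \le w$ in Bruhat order with $\ell(w) - \ell(u) = 1$, which forces the cover to be realized by a reflection and hence to be an edge of $\cB$; but any covering relation of Bruhat order is by a reflection, and $\ell(w) = \ell(u)+1$ together with the subword characterization (since $\NF(u)$ is literally a subword of $\NF(w)$) gives $u < w$, hence $u \lessdot w$, hence $(u,w) \in E(\cB)$. Thus $\varphi$ is a graph embedding, and since $V(\varphi(\cC)) = W$, its image is a spanning subgraph of $[1,w_0]_\cB$ isomorphic to $\cC(\ell_1,\dots,\ell_n)$, which is exactly the assertion that $\cC(\ell_1,\dots,\ell_n)$ cubulates $[1,w_0]_\cB$.

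The one point requiring genuine care — and the step I expect to be the main obstacle — is confirming that inserting a single letter into the $i$-th block of the normal form of $u$ genuinely produces a \emph{reduced} word for an element covering $u$, rather than a word that is non-reduced or that jumps in length. This is where the path hypothesis on $\tau_i$ is used essentially: because $\tau_i$ is a path, the prefixes $x_i^{(0)} \lessdot x_i^{(1)} \lessdot \cdots \lessdot x_i^{(\ell_i)}$ form a chain in $\prescript{[i-1]}{}{(W_{[i]})}$ each obtained from the previous by appending one letter, so $\NF(x_i^{(m_i+1)})$ extends $\NF(x_i^{(m_i)})$ by exactly one letter and remains the lexicographically first (hence reduced) normal form; concatenating with the unchanged blocks on either side and invoking the second sentence of Proposition~\ref{prop:NF} ($\NF(x) = \NF(x_1)\cdots\NF(x_n)$) shows the result is the reduced normal form of $w$, with $\ell(w) = \ell(u)+1$. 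Once this is in hand the rest is bookkeeping. I would also remark that conversely every edge of the image is accounted for — we make no claim that $\varphi(\cC)$ equals $[1,w_0]_\cB$, only that it spans it, consistent with the general warning after Definition~\ref{defn:subgraph} that a spanning subgraph typically omits edges.
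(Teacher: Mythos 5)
Your proposal is correct, and it is in essence the same construction as the paper's --- the vertex map you define, $\varphi(m_1,\dots,m_n)=x_1^{(m_1)}\cdots x_n^{(m_n)}$, is exactly the bijection that the paper's argument produces --- but your verification of the edge condition takes a genuinely different and more direct route. The paper proceeds by induction on $n$: it embeds $\cC(\ell_1,\dots,\ell_{n-1})$ into the Bruhat graph of the subsystem generated by $s_1,\dots,s_{n-1}$, attaches the disjoint paths $\tau_{x'}$, and then supplies the remaining horizontal edges by exhibiting the conjugated reflection $t=x_n^{-1}t'x_n$ together with the additivity of lengths from Proposition~\ref{prop:NF}. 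You instead check every edge of $\cC(\ell_1,\dots,\ell_n)$ uniformly: inserting the next edge-label of $\tau_i$ at the end of the $i$-th block of $\NF(u)$ yields $\NF(w)$ (reduced, by the uniqueness in Proposition~\ref{prop:NF}), so $\ell(w)=\ell(u)+1$ and $\NF(u)$ is a reduced subword of $\NF(w)$, giving $u<w$; and since each generating relation of Bruhat order raises length by at least one, a relation of length difference one must be a single reflection, so $(u,w)\in E(\cB)$. This trades the paper's inductive bookkeeping for the subword property plus the fact that length-one Bruhat relations come from a single reflection, and as a bonus it shows that your embedded lattice lies entirely in the Hasse diagram of $[1,w_0]$ (all its edges are covers), which the paper never needs to observe. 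Two small remarks: the reflection can be written down explicitly in your setting as $t=b^{-1}s\,b$, where $s$ is the inserted letter and $b=x_{i+1}^{(m_{i+1})}\cdots x_n^{(m_n)}$ is the product of the later blocks --- this recovers the paper's conjugation trick and lets you bypass the cover argument altogether; and you invoke the subword criterion in its ``some reduced expression'' form, which is the standard statement even though Section~\ref{sec:CoxeterBruhat} quotes the ``every reduced expression'' form, so it is worth a word to that effect.
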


\begin{figure}[ht!]
	\begin{subfigure}[m]{.4\linewidth}\centering
		\includegraphics[page=4]{fullgraph}
	\end{subfigure}
	~
	\begin{subfigure}[m]{.4\linewidth}\centering
		\includegraphics[page=5]{fullgraph}
	\end{subfigure}
	\caption{\footnotesize{On the left (respectively, right), we depict the inductive construction in the proof of Proposition~\ref{prop:NFcubical} in type $A_n$ for $n = 2$ (respectively, $n = 3$).}}\label{fig:w0}
\end{figure}

We will illustrate the proof of \Cref{prop:NFcubical} by following examples in types $A_2$ and $A_3$, as depicted in Figure~\ref{fig:w0}. Recall from Example~\ref{ex:A3NFF} that $(W,S)$ of type $A_3$ has a normal form forest of trees $\tau_1$, $\tau_2$, $\tau_3$, such that for $i = 1,2,3$ the tree $\tau_i$ is the path consisting of $i$ edges. The trees $\tau_1$ and $\tau_2$ give a normal form forest of paths in type $A_2$. We will see that in type~$A_2$ (respectively, $A_3$), the Bruhat graph $\cB$ is cubulated by $\cC(1,2)$ (respectively, $\cC(1,2,3)$).

\begin{proof}[Proof of \Cref{prop:NFcubical}] We proceed by induction on $n$. If $n = 1$, then $(W,S)$ is of type $A_1$ and the graph $[1,w_0]_\cB$ is isomorphic to $\cC(1)$. 

For the inductive step, write $\cB_{n-1}$ for the Bruhat graph of the subsystem of $(W,S)$ generated by $\{ s_1, \dots, s_{n-1}\}$, and assume that there is an isomorphism $\varphi$ from $\cC_{n-1}= \cC(\ell_1, \dots, \ell_{n-1})$ to a subgraph of $\cB$ which spans $\cB_{n-1}$. 
 For example, in type $A_n$ with $n = 2$ (respectively, $n=3$), the graph $\varphi(\cC_{n-1})\cong \cC_{n-1}$ is shown in black on the left (respectively, right) of Figure~\ref{fig:w0}.

Now, using Proposition~\ref{prop:NF}, we have that every $x \in W$ factors uniquely as $x = x' x_n$, where $x' \in V(\varphi(\cC_{n-1}))$ and $x_n$ is the label of a vertex in the path $\tau_n$. For every fixed $x' \in V(\varphi(\cC_{n-1}))$, we define $\tau_{x'}$ to be the subgraph of $\cB$ induced by the vertex set $\{ x' x_n \mid x_n \mbox{ a vertex label in }\tau_n  \}$. Then each such $\tau_{x'}$ is naturally isomorphic to the path $\tau_n$, with this isomorphism preserving all edge-labels, and the paths $\{ \tau_{x'} \mid x' \in V(\varphi(\cC_{n-1})) \}$ are pairwise disjoint subgraphs of~$\cB$.  Moreover, the Bruhat graph $\cB$ is spanned by the union of $\varphi(\cC_{n-1})$ and the collection of paths $\{ \tau_{x'} \mid x' \in V(\varphi(\cC_{n-1})) \}$. In type $A_2$ (respectively, $A_3$), the paths $\{\tau_{x'}\}$ are depicted vertically in blue on the left (respectively, right) of Figure~\ref{fig:w0}.

Observe that the cubical lattice $\cC_{n-1}$ naturally embeds as the subgraph $\cC(\ell_1,\dots,\ell_{n-1},0)$ of $\cC_n = \cC(\ell_1, \dots, \ell_{n})$. For each vertex $v' = (m_1, \dots, m_{n-1})$ of $\cC_{n-1}$, we now define $\tau_{v'}$ to be the subgraph of $\cC_n$ induced by the vertex set $\{ (m_1,\dots,m_{n-1},m_n) \mid 0 \leq m_n \leq \ell_n \}$. Then each such $\tau_{v'}$ is naturally isomorphic to the path $\tau_n$, and the paths $\{ \tau_{v'} \mid v' \in V(\cC_{n-1})\}$ are pairwise disjoint in $\cC_n$. Write $\cD_n$ for the subgraph of $\cC_n$ which is the union of its naturally embedded copy of $\cC_{n-1}$, together with all of the paths $\{ \tau_{v'} \mid v' \in V(\cC_{n-1})\}$. Then $\cD_n$ is a spanning subgraph of $\cC_n$, and we can construct an isomorphism $\psi$ from $\cD_n$ onto a spanning subgraph of $\cB$ by sending the copy of $\cC_{n-1}$ in $\cD_n$ onto $\varphi(\cC_{n-1})$ and, for any $v' \in V(\cC_{n-1})$, mapping the subgraph $\tau_{v'}$ of $\cC_n$ onto the subgraph $\tau_{\varphi(v')}$ of $\cB$. That is, in type $A_n$ for $n = 2$ (respectively, $n = 3$), the graph $\psi(\cD_n) \cong \cD_n$ is the union of the black and blue edges on the left (respectively, right) of Figure~\ref{fig:w0}.

It now suffices to show that this map $\psi:\cD_n \to \cB$ can be extended to an embedding of the entire cubical lattice $\cC_n$ into $\cB$. This amounts to showing that for any edge $(u,v)$ of $\cC_n$ which is not already in~$\cD_n$, there is an edge of $\cB$ from $\psi(u)$ to $\psi(v)$. On both sides of Figure \ref{fig:w0}, we thus need to show that we can connect all pairs of vertices in $\cB$ which are at the same height and are such that the vertices directly underneath them in $\varphi(\cC_{n-1})$ are connected by a black edge. We depict the two such ``missing'' edges in red on the left of Figure~\ref{fig:w0}, and (to avoid cluttering the image) only some of the ``missing'' edges in red on the right of Figure~\ref{fig:w0}.

Let $u = (p_1, \dots, p_{n-1}, p_n)$ and $v = (q_1, \dots, q_{n-1},q_n)$ be vertices of $\cC_n$ such that $(u,v) \in E(\cC_n) \setminus E(\cD_n)$. Then $u$ and $v$ must have the same final component $p_n = q_n \geq 1$, and there must be an edge from $u' = (p_1, \dots, p_{n-1})$ to $v' = (q_1, \dots, q_{n-1})$ in $\cC_{n-1}$. 
Now since  $(u',v')$ is an edge of $\cC_{n-1}$, by induction $(\varphi(u'),\varphi(v'))$ is an edge of $\cB_{n-1} \subset \cB$. This means exactly that $\ell(\varphi(v')) > \ell(\varphi(u'))$ and there is a reflection $t' \in \cT$ so that $\varphi(v') = \varphi(u') t'$. 
For example, in type $A_3$ consider the vertices $u = (0,1,2)$ and $v = (1,1,2)$ of $\cC_3 = \cC(1,2,3)$. The dashed red edge from $\psi(u)=s_2(s_3s_2)$ to $\psi(v)=s_1s_2(s_3s_2)$, on the right of Figure~\ref{fig:w0}, corresponds to the black edge from $\varphi(u') = s_2$ to $\varphi(v') = s_1s_2$, and we have $t' = s_2 s_1 s_2$.

To see that there is an edge of $\cB$ from $\psi(u)$ to $\psi(v)$, let $x_n$ be the label of the vertex at the end of the initial subpath of $\tau_n$ which consists of $p_n = q_n \geq 1$ edges. (For example, on the right of Figure~\ref{fig:w0} we have $x_3 = s_3 s_2$.) 
Then by definition of $\psi$, we have $\psi(u) = \psi(p_1, \dots, p_{n-1},p_n) = \varphi(u') x_n$, and similarly $\psi(v) = \varphi(v') x_n$. Since $\varphi(u') \in V(\cB_{n-1})$ and $x_n$ is the label of a vertex of $\tau_n$, by Proposition~\ref{prop:NF} we obtain \[ \ell(\psi(u)) = \ell(\varphi(u')x_n) = \ell(\varphi(u')) + \ell(x_n) = \ell(\varphi(u')) + p_n,\] and similarly $\ell(\psi(v)) = \ell(\varphi(v')) + q_n$. From the previous paragraph, we have $\ell(\varphi(v')) > \ell(\varphi(u'))$ and $p_n = q_n$. Thus we obtain  $\ell(\psi(v)) > \ell(\psi(u))$, as desired. 
Now let $t \in \cT$ be the reflection given by $t = x_n^{-1} t' x_n$. (For example, on the right of Figure~\ref{fig:w0} we have $t = s_2 s_3 (s_2 s_1 s_2) s_3 s_2$.) Then  \[ \psi(v) = \varphi(v') x_n = (\varphi(u') t') x_n = \varphi(u_0) x_n (x_n^{-1} t' x_n) = \psi(u)t.\]  
Therefore $(\psi(u),\psi(v))$ is an edge of $\cB$, which completes the proof. Geometrically, the reflection~$t$ which labels the edge $(\psi(u),\psi(v))$ is obtained by going down the path $\tau_{\varphi(u')}$, then across the edge $(\varphi(u'),\varphi(v'))$, then up the path $\tau_{\varphi(v')}$. \end{proof}


\subsubsection{Normal form forests of paths in types $A$ and $B/C$}\label{sec:classical}

Motivated by Proposition~\ref{prop:NFcubical}, we now, for types $A$ and $B/C$, exhibit normal form forests in which every rooted tree is a path. Hence the Bruhat graph $\cB = [1,w_0]_\cB$ can be cubulated in these types.

\emph{Type $A_n$.} Suppose that $(W,S)$ is of finite type $A_n$ for $n \geq 1$.  We shall construct a normal form forest $\tau_1, \dots, \tau_n$ in which every rooted tree $\tau_i$ is a path consisting of $i$ edges, generalizing Example \ref{ex:A3NFF} illustrated by Figure \ref{fig:A3NFF}.  

Label the nodes of the Dynkin diagram $1, \dots, n$ from left to right. Fix any $j \in [n]$, and consider the set $\prescript{[j-1]}{}{(W_{[j]})}$ of minimal-length representatives of the right cosets $W_{[j-1]} \backslash W_{[j]}$, where $W_{[j]} = \langle s_1, s_2, \dots, s_j \rangle$. In type $A_j$, the number of elements in $W_{[j-1]} \backslash W_{[j]}$ equals $|W_{[j]}| / |W_{[j-1]}| = \frac{j!}{(j-1)!} = j$. As every reduced expression in $\prescript{[j-1]}{}{(W_{[j]})}$ begins with $s_j$, the edge label immediately above the root vertex of $\tau_j$ is $j$. 

In order that all other reduced expressions read from $\tau_j$ also necessarily begin uniquely with $s_j$, we must label the remaining $j-1$ edges in turn by $j-1, j-2, \dots, 1$, due to the commuting relations encoded by the Dynkin diagram. Moreover, the expression $s_js_{j-1} \cdots s_2s_1$ is clearly reduced, as a Coxeter element in the parabolic subgroup $W_{[j]}$. Each of the $j$ initial subexpressions $s_j \cdots s_k$ for $k \in [j]$ is thus reduced and represents a distinct element of $\prescript{[j-1]}{}{(W_{[j]})}$. Therefore by Proposition \ref{prop:NF}, the rooted tree $\tau_j$ is the path consisting of $j$ edges which are labeled successively by $j, j-1, \dots, 1$.

By Proposition \ref{prop:NFcubical}, this construction of a normal form forest consisting of paths of length $1, 2, \dots, n$ proves that  $[1,w_0]_\cB$ is spanned by the cubical lattice $\cC(1, 2, \dots, n)$ in type $A_n$.

\emph{Types $B_n$ and $C_n$.} Suppose that $(W,S)$ is of finite type $B_n$ for $n \geq 2$. We shall construct a normal form forest $\tau_1, \dots, \tau_n$ in which every rooted tree $\tau_i$ is a path consisting of $2i-1$ edges.

Label the nodes of the Dynkin diagram $1, \dots, n$ from right to left, so that the last $n-1$ nodes form a type $A_{n-1}$ subsystem, and the special node is indexed by $1$; note that this is the reverse of the ordering from \cite{Bourbaki4-6}. Using this labeling, $W_{[1]} = \langle s_1 \rangle$ has type $A_1$, and $W_{[j]} = \langle s_1, s_2, \dots, s_j \rangle$ has type $B_j$ for all $j \geq 2$.  

We proceed by induction on $n \geq 2$.  First note that $\prescript{[0]}{}{(W_{[1]})} = \langle s_1\rangle$ has type $A_1$, and so $\tau_1$ is the path consisting of one edge labeled by 1, as seen in Figure \ref{fig:A3NFF}.  The group $W_{[2]} = \langle s_1, s_2 \rangle$ has type $B_2$, and so there are exactly 4 distinct minimal-length coset representatives in $\prescript{[1]}{}{(W_{[2]})} = \{ 1, s_2, s_2s_1, s_2s_1s_2\}$.  Since $s_2s_1s_2$ is reduced and all other elements of $\prescript{[1]}{}{(W_{[2]})}$ are initial subexpressions of $s_2s_1s_2$, then $\tau_2$ is a tree with 3 edges, labeled successively by $2, 1, 2$, establishing the base case for $n=2$.

Now for any $j \geq 3$, suppose that the $j-1$ rooted trees for the normal form forest of type $B_{j-1}$ are paths consisting of $2(j-1)-1$ edges, labeled successively by $1, 3, \dots, 2(j-1)-1$.  The group $W_{[j]}$ has type $B_j$, so that $\prescript{[j-1]}{}{(W_{[j]})}$ has $\frac{2^j j ! }{ 2^{j-1} (j-1)!} = 2j$ minimal-length coset representatives. By Theorem 1.1 of \cite{MilRWs} with the ordering of the nodes reversed, the expression $t = s_js_{j-1} \cdots s_2s_1s_2 \cdots s_{j-1}s_j$ is reduced, and therefore so are all $2j-1$ of its initial subexpressions.  Note by construction that both $t$ and all initial subexpressions must begin with $s_j$.  Together with the identity, we have thus identified the $2j$ elements of $\prescript{[j-1]}{}{(W_{[j]})}$ as those words formed by reading the labels of the tree $\tau_j$ with $2j-1$ edges, labeled successively by $j, j-1, \dots, 2,1,2, \dots, j-1, j$.

By induction and Proposition \ref{prop:NFcubical}, this construction of a normal form forest consisting of paths of lengths $1, 3, 5, \dots, 2n-1$ proves that  $[1,w_0]_\cB$ is spanned by the cubical lattice $\cC(1, 3, 5, \dots, 2n-1)$. Since the group $W$ is identical, the result follows for type $C_n$ as well.


\bibliographystyle{alphaurl}
\bibliography{bibliographyKL.bib}

\end{document}